\title[Volume renormalization for the Blaschke metric]
{{\bf Volume renormalization for the Blaschke metric on strictly convex domains}}
\author{TAIJI MARUGAME}
\date{}
\newcommand\R{\mathbb{R}}
\newcommand\Ric{{\rm Ric}}
\newcommand\scal{{\rm Scal}}
\renewcommand\a{\alpha}
\renewcommand\b{\beta}
\newcommand\g{\gamma}
\renewcommand\d{\delta}
\newcommand\e{\epsilon}
\renewcommand\r{\rho}
\renewcommand\th{\theta}
\newcommand\br{\boldsymbol{\rho}}
\newcommand\bu{\boldsymbol{u}}
\newcommand\bh{\boldsymbol{h}}
\newcommand\bS{\boldsymbol{S}}
\newcommand\bxi{\boldsymbol{\xi}}
\newcommand\bnabla{\boldsymbol{\nabla}}
\newcommand\U{\Upsilon}
\newcommand\lam{\lambda}
\newcommand\pa{\partial}
\newcommand{\calE}{\mathcal{E}}
\newcommand{\calJ}{\mathcal{J}}
\newcommand{\calO}{\mathcal{O}}
\newcommand{\wt}{\widetilde}
\newcommand{\wh}{\widehat}
\newtheorem{thm}{Theorem}[section]
\newtheorem{dfn}[thm]{Definition}
\newtheorem{prop}[thm]{Proposition}
\newtheorem{rem}[thm]{Remark}
\newtheorem{lem}[thm]{Lemma}
\address{Graduate School of Mathematical Sciences, The University of Tokyo,
	3-8-1 Komaba, Meguro, Tokyo 153-8914, Japan}
\email{marugame@ms.u-tokyo.ac.jp}
\begin{document}
\maketitle
\begin{abstract} 
We consider the volume expansion of the Blaschke metric, which is a projectively invariant metric on a strictly convex domain in a locally flat projective manifold. When the boundary is even dimensional, we express the logarithmic coefficient $L$ as the integral of affine invariants over the boundary. We also formulate an intrinsic geometry of the boundary as a conformal Codazzi structure and show that $L$ gives a global conformal invariant of the boundary. 
\end{abstract}
\section{Introducton}
Renormalization of the volume of a complete metric has been an important topic in conformal and CR 
geometries. In conformal geometry, the complete metric we consider is the Poincar\'e-Einstein metric 
(\cite{G}, \cite{HS}), which generalizes the hyperbolic metric on the unit ball. The volume renormalization of Poincar\'e-Einstein metric was motivated by theoretical physics and it has been intensively studied also in mathematical viewpoints (\cite{Al}, \cite{An}, \cite{GMS}). A Riemannian metric $g_+$ on the interior $X$ of a compact manifold with boundary is called a Poincar\'e-Einstein metric if it satisfies the Einstein equation and $x^2g_+$ extends to a metric on $\overline{X}$, where $x$ is a defining function of $X$ which is positive inside. The conformal class of $x^2g_+|_{T\partial X}$ is independent of the choice of $x$ and defines a conformal structure $[h]$ on $M=\partial X$, which is 
called the conformal infinity of $g_+$. The volume of $X$ with respect to $g_+$ is infinite, but one can renormalize it by expanding the volume of  subdomains; a representative metric $h\in[h]$ determines a special choice of a 
defining function $x$ with $x^2g_+|_{TM}=h$ and the volume of each sublevel set of $x$ has the following expansion:
\begin{equation}\label{vol-exp-PE}
{\rm Vol}(\{x>\e\}) \\
=\sum_{j=0}^{\lceil n/2\rceil-1}c_{-n+2j}\e^{-n+2j}+
\begin{cases}V+o(1) & (n: {\rm odd}) \\
L\log\frac{1}{\e}+V+o(1) & (n: {\rm even}),
\end{cases}
\end{equation}
where $n=\dim M$. The constant term $V$, called the {\it renormalized volume}, is independent of the choice of $h\in[h]$ when $n$ is odd. When $n$ is even $V$ depends on the choice, but instead the coefficient $L$ of the logarithmic term becomes an invariant of $g_+$. Moreover, it is shown in \cite{GZ}, \cite{FG} that $L$  agrees with the integral of the $Q$-curvature over $M$, which is a global conformal invariant. 

The complex counterpart of this setting is the Cheng--Yau metric on a strictly pseudoconvex domain with CR boundary in a complex manifold. The Cheng--Yau metric is a complete K\"ahler-Einstein metric constructed from the solution to the complex Monge--Amp\`ere equation (\cite{CY2}). In this case, there are two ways of choosing a defining function for the volume renormalization. Seshadri \cite{Se} uses a defining function which satisfies a normalization condition analogous to that in the conformal case and shows that the coefficient of the logarithmic term agrees with the total CR $Q$-curvature, though it vanishes for a natural class of contact forms, namely the class of pseudo-Einstein contact forms. On the other hand, the volume expansion with respect to the approximate solution to the Monge--Amp\`ere equation is considered in \cite{HMM}. The expansion has no logarithmic term but the constant term is shown to agree, up to a topological term, with the total $Q$-prime curvature, which is a global CR invariant defined by Case--Yang \cite{CaY} and Hirachi \cite{H} for pseudo-Einstein contact forms.

Conformal geometry and CR geometry are both examples of so called {\it parabolic geometries}. A parabolic geometry is a Cartan geometry modeled on the homogeneous space $G/P$, where $G$ is a semi-simple Lie group and $P$ is a parabolic subgroup, and can be treated in a unified way by Cartan connections, tractor calculus, and BGG machinery (\cite{CG1}, \cite{CS}, \cite{CSS}). 

In this article, we consider the volume renormalization in the setting of another important example of such geometries: {\it projective differential geometry}. Recall that two torsion-free affine connections are 
said to be projectively equivalent if they have the same geodesics up to parametrization. A differentiable manifold $N$ equipped with a projective equivalence class $[\nabla]$ (a projective structure) is called a projective manifold. The flat model is the projective space $\R\mathbb{P}^{n+1}$ with the canonical projective structure and we say $(N, [\nabla])$ is locally flat if it is locally projectively equivalent to the flat model. We view a locally flat projective structure as a real analogue of complex structure, and consider a strictly convex domain $\Omega$ in a projective manifold instead of a strictly pseudocovex domain. Along the analogy to the complex case, we define the real  Monge--Amp\`ere 
equation for a projective density $\br\in\calE(2)$ as 
\begin{equation}\label{MA-intro}
\det D_ID_J\br=-1, \quad \Omega=\{\br<0\}
\end{equation}
with the $D$-operator on the tractor bundle. (See \S2 for the definitions of $\calE(2)$ and $D_I$.) Then, with the (approximate) solution to \eqref{MA-intro}, we define a projectively invariant metric $g$ on $\Omega$ by
$$
g_{ij}=\frac{\nabla_i\nabla_j\br}{-2\br}+\frac{\nabla_i\br\nabla_j\br}{4\br^2}-P_{ij},
$$
where $P_{ij}$ is the projective Schouten tensor of $\nabla\in[\nabla]$. We call $g$ the {\it Blaschke metric} by following Sasaki \cite{Sa1}. The Blaschke metric is a generalization of the Klein (projective) 
model hyperbolic metric while the Poincar\'e-Einstein metric is a generalization of the Poincar\'e model hyperbolic metric. Fixing a projective scale $\tau\in\calE(1)$, we obtain the same volume expansion as 
in \eqref{vol-exp-PE} with $x=(-2\tau^{-2}\br)^{1/2}$, and $V$ (resp. $L$) is independent of the choice of 
$\tau$ when $n$ is odd (resp. even), where $n$ is the dimension of $M=\partial\Omega$ (Theorem
\ref{volume}). We then relate the invariant $L$ to the geometry of the boundary. If we fix a scale $\tau$, 
the boundary $M$ is endowed with several affine tensors such as the affine metric $h_{\a\b}$, the affine shape operator $S_\a{}^\b$, and the Fubini--Pick form $A_{\a\b\g}$. Under a rescaling $\wh\tau=e^{-\U}\tau$ with $\U\in C^{\infty}(N)$, $h_{\a\b}$ and $A_{\a\b\g}$ transform conformally and the transformation formula of $S_\a{}^\b$ involves the 1-jet of $\U$ along $M$. Given a 1-jet of a projective scale along the boundary, we introduce a normalized scale via the tractor parallel transport so that $L$ is written in terms of the integral of affine invariants (Theorem \ref{L-exp}).

Although the shape operator $S_\a{}^\b$ is an extrinsic curvature of $M$, there is an intrinsic definition 
of the geometric structure on a strictly convex hypersurface in a locally flat projective manifold: A {\it conformal Codazzi structure} is a conformal structure $[h]$ (or a M\"obius structure when $n=2$) together with a trace-free symmetric tensor $A_{\a\b\g}\in\calE_{(\a\b\g)_0}[2]$ which satisfies certain integrability conditions called the {\it Gauss--Codazzi equations} (see Definition \ref{conf-Codazzi}). This is a special case of the framework of {\it parabolic subgeometries} proposed by Burstall--Calderbank \cite{BC}. They develop the general theory of parabolic geometry on a submanifold of a generalized flag manifold, and derive the Gauss--Codazzi equations in terms of Lie algebra homology and the BGG operators. In this paper, we derive an explicit form of the equations in a rather direct way based on the comparison of the ambient projective tractor bundle and the conformal tractor bundle over the boundary;
these two bundles are canonically isomorphic and the projective tractor connection induces a connection on the conformal tractor bundle over $M$, whose flatness corresponds to the Gauss--Codazzi equations. The form of the equations depends on whether $n=2, 3$ or $\ge4$. When $n\ge4$, the equations are 
\begin{align*}
&({\rm Gauss}) &
 2\,{\rm tf}(A_{\nu\g[\a}A_{\b]\mu}{}^\nu)+W^h_{\a\b\g\mu}&=0,  \\
&({\rm Codazzi}) &
\nabla^h_{[\a}A_{\b]\g\mu}-\frac{1}{n}\Bigl(h_{\mu[\a}(\d A)_{\b]\g}+h_{\g[\a}(\d A)_{\b]\mu}\Bigr)&=0,
\end{align*}
where ${\rm tf}$ denotes the trace-free part, $W^h_{\a\b\g\mu}$ is the Weyl tensor, and $(\d A)_{\a\b}:=\nabla^h_\g A_{\a\b}{}^\g$.
See Proposition \ref{GC-prop} for the Gauss--Codazzi equations in $n=2,3$. The differential operator applied to $A_{\a\b\g}$ in the Codazzi equation is known as the second BGG operator associated with the representation $S^2_0 (\R^{n+2})^\ast$ of $SO(n+1, 1)$.  
Conversely, given a tensor $A_{\a\b\g}$ on a conformal manifold with the Gauss--Codazzi equations, one can define a local immersion to the projective space such that the induced conformal Codazzi structure agrees with the original one (Theorem \ref{proj bonnet}, the projective Bonnet theorem). Thus it contains complete information on the local geometric structure on a hypersurface in a projective manifold.

In order to relate the invariant $L$ to the conformal Codazzi structure on the boundary, we introduce a normalization on the 1-jet of the projective scale by imposing the  harmonicity $\wt\Delta\tau=O(\br)$ with respect the ambient metric $\wt g_{IJ}=D_ID_J\br$. Any conformal scale on the boundary extends to such a scale and we call it a {\it harmonic scale}. In  a harmonic scale, the shape operator is expressed with the Fubini--Pick form and we obtain a representation of $L$ as a conformal invariant on a conformal Codazzi manifold (Theorem \ref{L-codazzi}). Explicit formulas are given for $n=2, 4$ (see \eqref{L2}, \eqref{L4}). 

Using the ambient metric, one can construct GJMS operators and $Q$-curvature for conformal Codazzi geometry and show that the total $Q$-curvature agrees with $L$ as in conformal and CR cases; the details including some variation formulas will appear in \cite{M}. 
\medskip

This paper is organized as follows. In \S2, we review basic notions in projective differential geometry including projective tractor calculus. In \S3, we define the Blaschke metric via the solution to the real 
Monge--Amp\`ere equation and consider the volume expansion to obtain invariants of the domain. 
In \S4, we express the logarithmic coefficient $L$ as the integral of affine invariants over the boundary. 
We then derive the explicit form of the Gauss--Codazzi equations in \S5 and define the conformal Codazzi structures. We also introduce harmonic scales in order to represent $L$ as a conformal invariant. 
\medskip

\noindent
{\it Notations.}
We adopt Einstein's summation convention and assume that 
\begin{itemize}
\item
uppercase Latin indices $I, J, K, \dots$ run from 0 to $n+1$;
\item
lowercase Latin indices
$i, j, k, \dots$
run from 1 to $n+1$; 
\item
lowercase Greek indices $\alpha,\beta,\gamma,\dots$ run from 1 to $n$. 
\end{itemize}
\medskip
\noindent{\bf Acknowledgments.} This paper is based on part of the author's thesis at the University of Tokyo. I would like to express my deep gratitude to my advisor Professor Kengo Hirachi for his support and encouragement throughout this work. I would also 
like to thank Dr. Yoshihiko Matsumoto, Professor Michael Eastwood, and Professor 
Bent \O rsted for invaluable comments on the results. This research was partially supported by JSPS Fellowship and
 KAKENHI 13J06630.

\section{Projective structure and tractor bundle}
\subsection{Projective structures}Let $N$ be an oriented $C^{\infty}$-manifold of dimension $n+1\ge 3$. We use abstract index notation to denote tensors and tensor bundles over $N$. For example, $\calE^i$ and $\calE_i$ stand for $TN$ and $T^\ast N$ respectively, and sections  of the bundles are denoted by $X^i$ or $T_i$. The symmetrization and skew symmetrization of a tensor $T_{i_1i_2\dots i_k}$ are denoted by $T_{(i_1i_2\dots i_k)}$ and $T_{[i_1i_2\dots i_k]}$ respectively. The corresponding tensor bundles are denoted by $\calE_{(i_1i_2\dots i_k)}$ or $\calE_{[i_1i_2\dots i_k]}$, and we also use the same symbols to denote the space of sections of the bundles. Since $N$ is oriented we can define, for each $w\in\R$, an oriented real line bundle $\calE(w)=(\wedge^{n+1} T^\ast N)^{-w/(n+2)}$, which we call the {\it projective density bundle of weight $w$}.  A connection $\nabla$ on $TN$ induces a connection on $\calE(w)$, which we also denote by $\nabla$. A positive section of $\calE(1)$ is called a {\it projective scale}. We put $(w)$ to a bundle to indicate the tensor product with $\calE(w)$, as in $\calE_{ij}(w)$.

Let $\nabla$ and $\nabla^\prime$ be torsion-free affine connections on $TN$. We say $\nabla$ and $\nabla^\prime$ are {\it projectively equivalent} when there exists a 1-form $p_i$ such that 
$$
\nabla^\prime_i X^j=\nabla_i X^j+p_iX^j+p_lX^l \d_i{}^j,
$$
where $\d_i{}^j$ is the Kronecker delta. We write this relation as $\nabla^\prime=\nabla+p$. It is known that two affine connections are projectively equivalent if and only if they have the same unparametrized geodesic paths; see, e.g., \cite{CS} for a proof. An equivalence class $[\nabla]$ is called a {\it projective structure} on $N$. The curvature and Ricci tensors of $\nabla\in [\nabla]$ are defined by 
$$
(\nabla_i\nabla_j-\nabla_j\nabla_i)X^k=R_{ij}{}^k{}_lX^l, \quad \Ric_{jl}=R_{ij}{}^i{}_l.
$$
Note that the Ricci tensor is not symmetric in general. To deal with this matter, we introduce a subclass of 
$[\nabla]$, which consists of Ricci symmetric connections.
\begin{lem}
For each choice of a projective scale $\tau\in\calE(1)$, there exists a unique $\nabla\in[\nabla]$ such that $\nabla\tau=0$. Moreover the Ricci tensor of $\nabla$ is symmetric.
\end{lem}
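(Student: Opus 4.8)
The plan is to reduce the statement to the behaviour of the induced connections on the density bundle $\calE(1)$ together with a contraction of the first Bianchi identity.

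First I would fix an arbitrary background connection $\nabla^0\in[\nabla]$ and record how the induced connection on $\calE(w)$ varies across the projective class. Writing a general element as $\nabla=\nabla^0+p$, the Christoffel symbols change by $p_i\d_k{}^j+p_j\d_k{}^i$; tracing this change over $\wedge^{n+1}T^\ast N$ shows that the connection induced there changes by $-(n+2)p$, hence the one on $\calE(w)=(\wedge^{n+1}TN)^{w/(n+2)}$ changes by $+wp$, i.e. $\nabla_i\sigma=\nabla^0_i\sigma+wp_i\sigma$. Specialising to $\tau\in\calE(1)$, the condition $\nabla\tau=0$ becomes $\nabla^0_i\tau+p_i\tau=0$, which has the unique solution $p_i=-\tau^{-1}\nabla^0_i\tau$, a well-defined $1$-form because $\tau$ is nowhere zero. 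This produces the desired $\nabla=\nabla^0+p$; it is unique since any $\nabla'=\nabla^0+q\in[\nabla]$ with $\nabla'\tau=0$ forces $q_i=-\tau^{-1}\nabla^0_i\tau=p_i$, so $\nabla'=\nabla$.

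For the symmetry of the Ricci tensor I would first establish the identity $\Ric_{[jl]}=-\tfrac12 R_{jl}{}^i{}_i$, valid for any torsion-free $\nabla$: contract the upper index of the first Bianchi identity $R_{ij}{}^k{}_l+R_{jl}{}^k{}_i+R_{li}{}^k{}_j=0$ with $i$ and use the antisymmetry $R_{ij}{}^k{}_l=-R_{ji}{}^k{}_l$. Next, tracing over $\wedge^{n+1}TN$ as in the previous step shows that $R_{jl}{}^i{}_i$ is the curvature of the connection induced by $\nabla$ on $\wedge^{n+1}TN$, so that on $\calE(1)=(\wedge^{n+1}TN)^{1/(n+2)}$ one has $(\nabla_j\nabla_l-\nabla_l\nabla_j)\tau=\tfrac1{n+2}R_{jl}{}^i{}_i\,\tau$. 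Since $\nabla\tau=0$ by construction and $\tau$ is nowhere vanishing, $R_{jl}{}^i{}_i=0$, and therefore $\Ric_{[jl]}=0$.

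The only mildly delicate point is the bookkeeping in the middle paragraph --- keeping track of the weight factor $w$ and the sign when passing from $TN$ to the density bundle --- but this is entirely routine; everything else is formal manipulation of the curvature identities.
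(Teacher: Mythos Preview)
Your proof is correct and follows essentially the same approach as the paper: compute how the induced connection on $\calE(1)$ varies across the projective class to find the unique $p$, then use the contracted first Bianchi identity $R_{ij}{}^k{}_k=2\,\Ric_{[ji]}$ together with the fact that $\nabla$ preserves a nowhere-vanishing density to conclude Ricci symmetry. The only cosmetic difference is that the paper carries out the density computation in a local unimodular frame for $\tau^{-(n+2)}$, whereas you work directly with the weight bookkeeping on $\calE(w)$.
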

\begin{proof}
Let $\mathring{\nabla}$ be an arbitrary representative connection. Take a local unimodular frame 
$\{e_i\}$ with respect to the volume $\tau^{-(n+2)}$ and let $\mathring{\omega}_i{}^j$ be the connection forms of $\mathring{\nabla}$. Then for $\nabla=\mathring{\nabla}+p\in[\nabla]$, we have
$$
\nabla\tau=\bigl((n+2)^{-1}\mathring{\omega}_l{}^l+p\bigr)\otimes\tau.
$$
Thus there is a unique $p$ for which $\nabla\tau=0$. The Bianchi identity $R_{ij}{}^k{}_k=2\,\Ric_{[ji]}$ implies that $\nabla$ is Ricci symmetric.
\end{proof}

From now on, we will restrict ourselves to representative connections given as in the above lemma. If we change a projective scale as $\wh{\tau}=e^{-\U}\tau, \U\in C^\infty (N)$, the connection transforms by $\wh{\nabla}=\nabla+d\U$. We define the {\it projective Schouten tensor} $P_{ij}\in\calE_{(ij)}$ and the {\it projective Weyl tensor} $C_{ij}{}^k{}_l\in\calE_{[ij]}{}^k{}_l$ by
$$
P_{ij}=\frac{1}{n}\Ric_{ij},\quad C_{ij}{}^k{}_l=R_{ij}{}^k{}_l-2\,\d_{[i}{}^kP_{j]l}.
$$ 
For a change of the scale $\wh{\tau}=e^{-\U}\tau$, the projective Weyl tensor is invariant and the projective Schouten tensor satisfies
$$
\wh{P}_{ij}=P_{ij}-\nabla_i\U_j+\U_i\U_j,
$$
where $\U_i=d\U$. A projective structure is said to be {\it locally flat} if $C_{ij}{}^k{}_l=0$. This is equivalent to the condition that on a neighborhood of any point in $N$ there exists a projective scale such that the corresponding connection in the projective class is flat.
We call such a projective scale an {\it affine scale}.
From the Bianchi identity $\nabla_{[i}R_{jk]}{}^l{}_m=0$ it follows that 
$\nabla_k C_{ij}{}^k{}_l=2(n-1)\nabla_{[i}P_{j]l}$, so if $[\nabla]$ is locally flat $\nabla_{[i}P_{j]l}$ also vanishes. 

\subsection{Projective tractor bundle and tractor connection}
We review the construction of the tractor bundles and tractor connections for projective manifolds by following \cite{BEG}. We refer to \cite{CG1} for the general construction of tractor bundles for parabolic geometries.

Let $\pi: \wt N\rightarrow N$ be the $\R_+$-bundle of positive elements in $\calE(-1)$. Let $\R_+$ act on $T\wt N$ by $s\cdot u:=s^{-1}(\d_s)_\ast u$, where $\d_s$ is the dilation in the fibers of $\wt N$.
Then $\wt \calE^I:=T\wt N/\R_+$ becomes a rank $(n+2)$ vector bundle over $N$, and is called the 
{\it projective tractor bundle}. We call an element $\nu^I\in\wt\calE^I$ a {\it tractor}. We often identify a section of $\wt\calE^I$ with a vector field on $\wt N$ homogeneous of degree $-1$. Since the inclusion $\pi^\ast\calE(-1)\subset T\wt N$ is $\R_+$-invariant, $\wt \calE^I$ contains $\calE(-1)$ as a line subbundle, which is the kernel of the well-defined projection $\wt \calE^I\longrightarrow \calE^i(-1)$. Thus we have an exact sequence 
\begin{equation}\label{Euler}
0\longrightarrow \calE(-1)\longrightarrow \wt \calE^I \longrightarrow \calE^i(-1)\longrightarrow 0.
\end{equation}
If we choose a projective scale $\tau\in\calE(1)$ then $\wt N$ is trivialized by $\tau^{-1}$ and accordingly the sequence 
\eqref{Euler} splits: 
$$
\wt\calE^I\overset{\tau}{\cong}
\begin{matrix}
\calE^i(-1) \\
\oplus \\
\calE(-1)
\end{matrix}.
$$
So a tractor is represented as $\nu^I={}^t(\nu^i, \lam)$ with $\nu^i\in\calE^i(-1)$ and 
$\lam\in\calE(-1)$. Let $\nu^I={}^t(\wh\nu^i, \wh\lam)$ be the expression in terms of another scale $\wh\tau=e^{-\U}\tau$. Then two expressions are related as
\begin{equation}\label{tract-trans}
\begin{pmatrix}
\wh\nu^i \\
\wh\lam
\end{pmatrix}
=
\begin{pmatrix}
\nu^i \\
\lam-\U_k\nu^k
\end{pmatrix}.
\end{equation}
The inclusion $\calE(-1)\hookrightarrow\wt\calE^I$ is described as $\lam\mapsto\lam T^I$, where 
$T^I:={}^t(0,1)\in\wt\calE^I(1)$ is the {\it Euler field}.

The {\it tractor connection} is a projectively invariant linear connection on $\wt\calE^I$ defined by 
\begin{equation}\label{tract-conn}
\nabla_i
\begin{pmatrix}
\nu^j \\
\lam
\end{pmatrix}
=
\begin{pmatrix}
\nabla_i\nu^j+\lam\d_i{}^j \\
\nabla_i\lam-P_{ik}\nu^k
\end{pmatrix}
\end{equation}
in a chosen projective scale. We can also consider the dual or tensor products of the projective tractor bundles, and we also call them ``tractor bundles''. The transformation laws and the induced tractor connections are computed from \eqref{tract-trans} and \eqref{tract-conn}. For the dual bundle $\wt\calE_I\overset{\tau}{\cong}\calE(1)\oplus\calE_i(1)$, we have
$$
\begin{pmatrix}
\wh\sigma\\
\wh\mu_i
\end{pmatrix}
=
\begin{pmatrix}
\sigma \\
\mu_i+\U_i\sigma
\end{pmatrix},
\quad
\nabla_i
\begin{pmatrix}
\sigma \\
\mu_j
\end{pmatrix}
=
\begin{pmatrix}
\nabla_i\sigma-\mu_i \\
\nabla_i\mu_j+P_{ij}\sigma
\end{pmatrix}.
$$
We define the curvature of the tractor connection by 
$$
(\nabla_i\nabla_j-\nabla_j\nabla_i)\nu^K=\Omega_{ij}{}^K{}_L\nu^L.
$$
Then we have
$$
\Omega_{ij}{}^K{}_L=
\begin{pmatrix}
C_{ij}{}^k{}_l & 0 \\
-2\nabla_{[i}P_{j]l} & 0
\end{pmatrix}.
$$
Thus the projective tractor connection is flat if and only if the projective structure $[\nabla]$ is 
locally flat.

Let $\wt\calE_\ast(w)$ denote a weighted tractor bundle with arbitrary tractor indices. 
The {\it projective tractor $D$-operator} $D_I : \wt\calE_\ast(w)\rightarrow\wt\calE_{I\ast}(w-1)$ 
is defined by 
$$
D_If_\ast=
\begin{pmatrix}
wf_\ast \\
\nabla_i f_\ast
\end{pmatrix}
\in\, \begin{matrix}
\wt\calE_\ast(w) \\
\oplus \\
 \wt\calE_{i\ast}(w)
\end{matrix}
\overset{\tau}{\cong}
 \wt\calE_{I\ast}(w-1),
$$
where in the second slot, we use the connection induced by the tractor connection and the flat connection on $\calE(w)$ relative to a chosen scale $\tau\in \calE(1)$. It is immediate to check that $D_I$ is a projectively invariant differential operator. A calculation shows that the commutator of  the $D$-operator satisfies $T^ID_{[I}D_{J]}f_\ast=0$ and it projects to $\nabla_{[i}\nabla_{j]}f_\ast$. Therefore 
$D_{[I}D_{J]}f_\ast=0$ for locally flat projective structures. 

We can give a characterization of the affine scale via projectively invariant differential operators:
\begin{prop}\label{affine-scale}
Assume that $[\nabla]$ is locally flat and let $\tau\in\calE(1)$ be a projective scale.  Then the following are equivalent:
\begin{itemize}
\item[(i)] The scale $\tau$ is affine;
\item[(ii)] It holds that $D_ID_J\tau=0$;
\item[(iii)] It holds that $\nabla_i\nabla_j\tau+P_{ij}\tau=0$ for any $\nabla\in[\nabla]$.
\end{itemize}
\end{prop}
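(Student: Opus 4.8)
\medskip
\noindent\emph{Proof sketch (proposal).}
The plan is to compute $D_ID_J\tau$ explicitly in a fixed projective scale; the three equivalences then come out almost immediately. So fix a scale, let $\nabla\in[\nabla]$ be the corresponding connection (characterized by $\nabla\tau=0$) and $P_{ij}$ its Schouten tensor. Since $\tau\in\calE(1)$ has weight $1$, the definition of the $D$-operator gives $D_J\tau={}^t(\tau,\,\nabla_j\tau)$ under the splitting $\wt\calE_J\overset{\tau}{\cong}\calE(1)\oplus\calE_j(1)$. As $D_J\tau$ is a weight-$0$ cotractor, applying $D_I$ (whose top component $w\,(\cdot)$ now vanishes, the weight having dropped to $0$) and using the formula for the induced tractor connection on $\wt\calE_J$, the top component of $\nabla_i(D_J\tau)$ cancels while its bottom component is $\nabla_i\nabla_j\tau+P_{ij}\tau$, so that
$$
D_ID_J\tau={}^t\bigl(0,\ {}^t(0,\ \nabla_i\nabla_j\tau+P_{ij}\tau)\bigr)
$$
in this scale. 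Hence the only component of $D_ID_J\tau$ that can be nonzero is the bottom one, $\nabla_i\nabla_j\tau+P_{ij}\tau$ (automatically symmetric in $i,j$ in the locally flat case, consistently with $D_{[I}D_{J]}\tau=0$).

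From this the equivalence (ii)$\Leftrightarrow$(iii) is immediate: the bottom component above is exactly the expression occurring in (iii) for the connection with $\nabla\tau=0$, and since the remaining components of $D_ID_J\tau$ vanish identically, $D_ID_J\tau=0$ holds precisely when that expression vanishes. Because $D_ID_J\tau$ is a projectively invariant tractor, this happens for one scale-adapted connection if and only if it happens for every one, which is statement (iii) verbatim.

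For the link with (i) I would argue directly. If $\tau$ is affine then its adapted connection $\nabla$ is flat, so $\Ric=0$ and hence $P_{ij}=0$; combined with $\nabla\tau=0$, the displayed formula gives $D_ID_J\tau=0$, i.e.\ (ii). Conversely, assume (ii), equivalently (iii), and apply (iii) to the connection with $\nabla\tau=0$: the equation collapses to $P_{ij}\tau=0$, so $P_{ij}=0$ (as $\tau$ is a positive section) and $\Ric=0$. Local flatness gives $C_{ij}{}^k{}_l=0$, whence $R_{ij}{}^k{}_l=C_{ij}{}^k{}_l+2\,\d_{[i}{}^kP_{j]l}=0$, so $\nabla$ is flat and $\tau$ is affine.

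The whole thing is bookkeeping with the tractor $D$-operator, so I do not anticipate a real obstacle. The two points needing care are tracking the weight drop when iterating $D_I$ (so that the top component of $D_I(D_J\tau)$ drops out), and noticing in the last step that it is the \emph{local} flatness hypothesis --- forcing the projective Weyl tensor to vanish a priori --- that promotes $\Ric=0$ to genuine flatness of the connection.
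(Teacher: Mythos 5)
Your proposal is correct and follows essentially the same route as the paper: both compute $D_ID_J\tau$ in the scale $\tau$ to see that its only potentially nonzero slot is $\nabla_i\nabla_j\tau+P_{ij}\tau$ (giving (ii)$\Leftrightarrow$(iii) by projective invariance), and both reduce (i) to the vanishing of $P^\tau_{ij}$ in the adapted scale, using $C_{ij}{}^k{}_l=0$ to upgrade $P_{ij}=0$ to flatness of $\nabla$. Your version merely spells out the decomposition $R_{ij}{}^k{}_l=C_{ij}{}^k{}_l+2\,\d_{[i}{}^kP_{j]l}$ that the paper leaves implicit.
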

\begin{proof}
The equivalence of (ii) and (iii) follows from the equation
$$
D_ID_J\tau=
\begin{pmatrix}
0 & 0 \\
0 & \nabla_i\nabla_j\tau+P_{ij}\tau
\end{pmatrix}.
$$
If we compute in the scale $\tau$, we have $\nabla_i\nabla_j\tau+P_{ij}\tau=P^\tau_{ij}\tau$, where $P^\tau_{ij}$ is the projective Schouten tensor of the corresponding representative connection $\nabla\in[\nabla]$. Thus $\nabla_i\nabla_j\tau+P_{ij}\tau=0$ if and only if $P^\tau_{ij}=0$, which is equivalent to the flatness of $\nabla$ by $C_{ij}{}^k{}_l=0$.
\end{proof}

\section{Monge--Amp\`{e}re equation and Blaschke metric}
\subsection{Monge--Amp\`{e}re equation on strictly convex domains}\label{MA-convex}
Let $(N, [\nabla])$ be an oriented $(n+1)$-dimensional projective manifold. Let $\Omega$ be a relatively 
compact domain in $N$ with smooth boundary $\pa\Omega=M$. We say $M$ is {\it strictly convex} if the Hessian $(\nabla_i\nabla_j\r)|_{TM}$ is positive definite for a connection $\nabla\in[\nabla]$ and a defining function $\r$. (Here and after, the defining functions or defining densities are assumed to be {\it negative} inside the domain.) Note that the condition is independent of the choices of $\nabla$ and $\r$.

In order to formulate the real Monge--Amp\`{e}re equation on $\Omega$, we will define the determinant of an element in $\wt\calE_{ij}(w)$ or $\wt\calE_{IJ}(w)$. Take a projective scale $\tau\in\calE(1)$. First we define 
$$
\det:\wt\calE_{ij}(w)\longrightarrow\calE\bigl((n+1)w-2n-4\bigr)
$$
by
$$
\det \phi_{ij}:=\tau^{(n+1)w-2n-4}\det{}_\tau(\tau^{-w}\phi_{ij}),
$$
where in the right-hand side we take the determinant in a unimodular frame with respect to the volume $\tau^{-(n+2)}$. Then, $\det \phi_{ij}$ is independent of $\tau$ since we have $\det{}_{\wh\tau}=e^{-(2n+4)\U}\det{}_\tau$ for another projective scale $\wh\tau=e^{-\U}\tau$.
Next we define 
$$
\det:\wt\calE_{IJ}(w)\longrightarrow\calE\bigl((n+2)w\bigr)
$$
by
$$
\det
\begin{pmatrix}
\sigma & \nu_j \\
\mu_i   & \lam_{ij}
\end{pmatrix}
:=\tau^{(n+2)w}\det{}_\tau\Bigl[\tau^{-w-2}
\begin{pmatrix}
\sigma & \nu_j \\
\mu_i   & \lam_{ij}
\end{pmatrix}
\Bigr].
$$
Note that each component in the matrix has weight $w+2$. One can show that the determinant  is invariant under the change $\wh\tau=e^{-\U}\tau$ by using the transformation law
$$
\begin{pmatrix}
\wh\sigma & \wh\nu_j \\
\wh\mu_i   & \wh\lam_{ij}
\end{pmatrix}
=
\begin{pmatrix}
\sigma & \nu_j+\U_j\sigma \\
\mu_i+\U_i\sigma   & \lam_{ij}+\U_j\mu_i+\U_i\nu_j+\U_i\U_j\sigma
\end{pmatrix}.
$$
Now the real Monge--Amp\`{e}re equation for a density $\br\in\calE(2)$ is defined by
\begin{eqnarray}\label{MA1}
\begin{cases}
\calJ[\br]=-1, \\
\Omega=\{\br<0\},
\end{cases}
\end{eqnarray}
where
$$
\calJ[\br]:=\det(D_ID_J\br) =\det\begin{pmatrix} 2\br & \nabla_j\br \\
                              \nabla_i\br & \nabla_i\nabla_j\br+2\br P_{ij}
                                \end{pmatrix}.
$$               
For a defining density $\br\in\calE(2)$, we call $\wt g_{IJ}=D_ID_J\br$ a {\it pre-ambient metric}. By 
strict convexity of $M$, $\wt g_{IJ}$ is identified with  a Lorentz metric on $\wt N$ near $M$ which is  homogeneous of degree 2. Then the Monge--Amp\`{e}re equation is equivalent to $\det\wt g_{IJ}=-1$. Note, however, that the equation does not imply that $\wt g_{IJ}$ is Ricci flat, unlike in the complex case. 

The real Monge--Amp\`{e}re equation can be rewritten as an equation for a density $\bu\in\calE(1)|_\Omega$ which is continuous up to $M$ but may not be a defining density.
Setting $\bu=-(-2\br)^{1/2}$, we have
$$
\det(D_ID_J\br)=-(-\bu)^{n+3}\det(\nabla_i\nabla_j\bu+P_{ij}\bu).
$$
Thus \eqref{MA1} is equivalent to 
\begin{eqnarray}\label{MA2}
\begin{cases}
(-\bu)^{n+3}\det(\nabla_i\nabla_j\bu+P_{ij}\bu)=1, \\
\bu|_{M}=0.
\end{cases}
\end{eqnarray}
The differential operator $\nabla_i\nabla_j+P_{ij}$ on $\calE(1)$ is the projecting part of $D_ID_J$ so it is projectively invariant. It is also known as the first BGG operator associated with the representation of 
$SL(n+2, \R)$ on $(\R^{n+2})^\ast$.
The generalization of real Monge--Amp\`{e}re equation via the first BGG operator is also considered by Fox in \cite{Fo}.

With a solution $\br$ or $\bu$ to the Monge--Amp\`{e}re equation, we define a projectively invariant metric on $\Omega$ by
\begin{equation}\label{Blaschke}
\begin{aligned}
g_{ij}&=\frac{\nabla_i\nabla_j\bu}{-\bu}-P_{ij} \\
&=\frac{\nabla_i\nabla_j\br}{-2\br}+\frac{\nabla_i\br\nabla_j\br}{4\br^2}-P_{ij}\in\calE_{(ij)}. 
\end{aligned}
\end{equation}
We call $g_{ij}$ the {\it Blaschke metric} by following \cite{Sa1}. In the case where $N=\R^{n+1}$ and 
$[\nabla]$ is the canonical flat projective structure, unique existence of the solution to \eqref{MA2}
 has been proved by Cheng--Yau:
\begin{thm}[\cite{CY1}]
Let $\Omega$ be a bounded strictly convex domain in $\R^{n+1}$. Then there exists a unique 
function $u\in C^{\infty}(\Omega)\,\cap\, C(\overline{\Omega})$ such that $(-u)^{n+3}\det(\pa^2u/\pa x^i\pa x^j)\\
=1$ and $u|_{\partial\Omega}=0$.
\end{thm}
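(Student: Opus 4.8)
\medskip
\noindent\emph{Proof strategy.}
The plan is to treat this as a Monge--Amp\`ere Dirichlet problem whose right-hand side is singular at the boundary: solve a family of regularized problems by the classical theory, extract a limit using interior a priori estimates, and deduce uniqueness from the comparison principle. Concretely, rewrite the equation as $\det(D^2u)=(-u)^{-(n+3)}$, where $D^2u=(\pa^2u/\pa x^i\pa x^j)$ denotes the Euclidean Hessian, and look for a solution with $u<0$ in $\Omega$; the function $\psi(t):=(-t)^{-(n+3)}$ is smooth, positive and strictly increasing on $(-\infty,0)$, with $\psi(t)\to+\infty$ as $t\uparrow0$. For $0<\e\le1$ I would introduce the regularized problems $\det(D^2u_\e)=(\e-u_\e)^{-(n+3)}$ in $\Omega$, $u_\e=0$ on $\pa\Omega$; the right-hand side is now smooth, bounded, positive and nondecreasing in $u_\e$, so the classical theory of the Monge--Amp\`ere Dirichlet problem on a smooth bounded strictly convex domain (Caffarelli--Nirenberg--Spruck) produces a unique convex $u_\e\in C^\infty(\overline\Omega)$. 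Since $(\e-t)^{-(n+3)}$ is monotone in $\e$, the comparison principle shows $u_\e$ is nondecreasing in $\e$, hence decreases monotonically as $\e\downarrow0$, so a uniform lower bound will yield the pointwise limit $u=\lim_{\e\downarrow0}u_\e$.

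Next, the barriers. Fix a smooth strictly convex defining function $\rho$ of $\Omega$, negative inside, with $D^2\rho\ge c_0 I$ for some $c_0>0$; such a $\rho$ exists because $\Omega$ is bounded and strictly convex. A direct computation shows that $\underline u:=-A(-\rho)^\g$ is a global subsolution, $\det(D^2\underline u)\ge(-\underline u)^{-(n+3)}$, for a suitable exponent $\g\in(0,\tfrac12)$ and $A$ large, while $\overline u:=\lam\rho$ is a global supersolution, $\det(D^2\overline u)\le(-\overline u)^{-(n+3)}$, for $\lam>0$ small. Since adding $\e>0$ only decreases the right-hand side, $\underline u$ (resp.\ $\overline u$) is also a sub- (resp.\ super-) solution of each $\e$-problem with $\e\in(0,1]$, and as $\underline u=\overline u=0=u_\e$ on $\pa\Omega$, the comparison principle gives $\underline u\le u_\e\le\overline u$ on $\overline\Omega$ for all such $\e$. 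Consequently $u_\e$ is uniformly bounded, $u_\e\to0$ at $\pa\Omega$ uniformly in $\e$ (being squeezed between $\underline u$ and $\overline u$), and on every $\Omega'\Subset\Omega$ the right-hand side $(\e-u_\e)^{-(n+3)}$ stays between two positive constants independent of $\e$.

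Then I would invoke the standard interior regularity theory for Monge--Amp\`ere equations. On sections of $u_\e$ --- equivalently, on a nested pair $\Omega''\Subset\Omega'\Subset\Omega$ --- the two-sided bound on the right-hand side together with convexity of $u_\e$ gives, via the Pogorelov--Caffarelli interior second-derivative estimate, locally uniform $C^2$ bounds; Evans--Krylov then yields locally uniform $C^{2,\a}$ bounds and Schauder bootstrapping locally uniform $C^k$ bounds for every $k$, all independent of $\e$. Letting $\e\downarrow0$ produces $u\in C^\infty(\Omega)$ solving $\det(D^2u)=(-u)^{-(n+3)}$, i.e.\ $(-u)^{n+3}\det(\pa^2u/\pa x^i\pa x^j)=1$, and the barriers force $u\in C(\overline\Omega)$ with $u|_{\pa\Omega}=0$. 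Uniqueness follows from the strict monotonicity of $\psi$: any solution attains its minimum in the interior, hence is convex there; if $u_1,u_2$ are two solutions vanishing on $\pa\Omega$ with $\sup_\Omega(u_1-u_2)>0$, the supremum is attained at an interior point $x_0$, where $D^2u_1(x_0)\le D^2u_2(x_0)$ (both positive definite by the equation), so $(-u_1(x_0))^{-(n+3)}=\det D^2u_1(x_0)\le\det D^2u_2(x_0)=(-u_2(x_0))^{-(n+3)}$, which forces $u_1(x_0)\le u_2(x_0)$, a contradiction.

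The step I expect to be the main obstacle is the singularity of the right-hand side at $\pa\Omega$, which is precisely what blocks a direct appeal to the classical Monge--Amp\`ere Dirichlet theory. One must choose the barriers with the correct boundary exponent --- the computation singles out the power $\tfrac12$, so any $\g<\tfrac12$ works for the subsolution --- so that the $C^0$ estimate holds uniformly up to $\pa\Omega$ and $u_\e$ stays uniformly away from $0$ on compact subsets; one then relies on the fact that the interior Pogorelov-type estimate requires only \emph{local} two-sided bounds on the right-hand side, which the barriers supply, so that it survives the limit $\e\downarrow0$.
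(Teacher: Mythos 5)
This statement is quoted verbatim from Cheng--Yau \cite{CY1} and the paper offers no proof of it, so there is no argument of the author's to compare yours against; what you have written is a from-scratch outline of the PDE result itself. As such an outline it is essentially sound and follows the now-standard route: regularize the singular right-hand side, solve the regularized Dirichlet problems by Caffarelli--Nirenberg--Spruck, trap the approximants between the barriers $-A(-\r)^\g$ and $\lam\r$ (your exponent analysis correctly identifies $1/2$ as the critical boundary power, matching $u=-(1-|x|^2)^{1/2}$ on the ball), pass to the limit using interior Pogorelov/Evans--Krylov/Schauder estimates, and get uniqueness from the Loewner-order monotonicity of $\det$ at an interior maximum of $u_1-u_2$. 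This differs from Cheng--Yau's original argument, which predates the Caffarelli--Nirenberg--Spruck theory and instead works with Alexandrov generalized solutions of $\det D^2u=F(x,u)$ combined with their own interior a priori estimates; your version is cleaner to state but outsources more to later machinery, while theirs is self-contained and covers non-smooth convex domains.

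Three points deserve care if you were to write this out. First, your barrier construction assumes a smooth defining function with $D^2\r\ge c_0 I$, i.e.\ a smooth uniformly convex boundary; that is harmless in the context of this paper (where $\pa\Omega=M$ is smooth and strictly convex) but the theorem as stated by Cheng--Yau allows general bounded convex domains, for which one must exhaust by smooth subdomains. Second, the Pogorelov estimate you invoke must be the version for right-hand sides depending on $u$ (which is exactly Cheng--Yau's interior $C^2$ estimate), and its constants degenerate near $\pa\Omega$ --- consistent with the conclusion $u\in C^\infty(\Omega)\cap C(\overline\Omega)$ only, but worth saying explicitly. Third, in the uniqueness step the assertion that any solution is convex needs the small connectedness argument (at an interior minimum $D^2u>0$, and the signature of the nondegenerate Hessian is locally constant), and one is implicitly restricting to solutions with $u<0$ in $\Omega$, which is the intended class.
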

For $\Omega={\mathbb{B}}^{n+1}$, the solution is given by $u(x)=-(1-|x|^2)^{1/2}$ and the Blaschke metric agrees with the Klein model hyperbolic metric. 
\subsection{Fefferman's approximate solution}\label{Feff-approx}
We do not have the existence theorem for the Monge--Amp\`{e}re equation on convex domains in general projective manifolds, but we can construct an asymptotic solution to \eqref{MA1} by the method of Fefferman \cite{Fe} (see also \cite{Sa2} for the real case).
\begin{lem}\label{MAlem}
Let $\br\in\calE(2)$ be a defining density of $\Omega$. \\
{\rm (i)} For any positive function $f\in\calE$, it holds that
\begin{equation}\label{var-MA1}
\calJ[f\br]=f^{n+2}\calJ[\br]+O(\br).
\end{equation}
{\rm (ii)} Let $m\ge 2$ and suppose that $\br$ satisfies $\calJ[\br]=-1+K\br^{m-1}$ with $K\in\calE(2-2m)$.
Then for any density $\phi\in\calE(2-2m)$, it holds that
\begin{equation}\label{var-MA2}
\calJ[\br+\phi\br^m]=-1+\bigl(K-m(n+4-2m)\phi\bigr)\br^{m-1}+O(\br^m).
\end{equation}
Moreover, if $m\ge3$ we also have
\begin{equation}\label{var-MA3}
\calJ[\br+\phi\br^m]=-1+\bigl(K-m(n+4-2m)\phi\bigr)\br^{m-1}+\br^m\wt\Delta\phi+O(\br^{m+1}),
\end{equation}
where $\wt\Delta=-\wt g^{IJ}\wt\nabla_I\wt\nabla_J$ is the Laplacian of $\wt g_{IJ}=D_ID_J\br$.
\end{lem}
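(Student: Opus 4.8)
The plan is to reduce all three statements to the factorization
$$\det(\wt g_{IJ}+\epsilon_{IJ})=\det\wt g_{IJ}\cdot\det(\delta^I_J+\wt g^{IK}\epsilon_{KJ}),\qquad \wt g_{IJ}:=D_ID_J\br,$$
together with the fact that $D_I$ is a derivation on weighted scalar densities and the following algebraic identities, which I would record first. Since $D_{[I}D_{J]}\br=0$, the pre-ambient metric $\wt g_{IJ}$ is symmetric and satisfies the Euler-type relation $\wt g_{IJ}T^J=D_I\br$, where $T^I$ is the Euler field; equivalently $\wt g^{IJ}D_J\br=T^I$. Since $T^ID_I$ is the weight operator, this gives $\wt g^{IJ}(D_I\br)(D_J\br)=T^ID_I\br=2\br$ and, for any $f\in\calE(w)$, $\wt g^{IJ}(D_If)(D_J\br)=T^ID_If=wf$; and trivially $\wt g^{IJ}\wt g_{IJ}=n+2$.

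First I would prove (i). The derivation property gives $D_ID_J(f\br)=f\wt g_{IJ}+(D_If)(D_J\br)+(D_I\br)(D_Jf)+\br\,D_ID_Jf$. Factoring out $f^{n+2}$ and applying the rank-two matrix determinant identity to the perturbation $(D_If)(D_J\br)+(D_I\br)(D_Jf)$ of $f\wt g_{IJ}$, the two off-diagonal pairings $\wt g^{IJ}(D_If)(D_J\br)$ vanish (since $f$ has weight $0$) while $\wt g^{IJ}(D_I\br)(D_J\br)=2\br$, so the perturbed determinant equals $f^{n+2}\calJ[\br]\bigl(1-2\br f^{-2}\wt g^{IJ}(D_If)(D_Jf)\bigr)$. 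Adding back the manifestly $O(\br)$ contribution of the term $\br\,D_ID_Jf$ gives $\calJ[f\br]=f^{n+2}\calJ[\br]+O(\br)$.

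For (ii) and (iii) I would expand $\epsilon_{IJ}:=D_ID_J(\phi\br^m)$ by the derivation rule into
$$\epsilon_{IJ}=m(m-1)\phi\br^{m-2}(D_I\br)(D_J\br)+m\br^{m-1}\bigl[(D_I\phi)(D_J\br)+\phi\,\wt g_{IJ}+(D_I\br)(D_J\phi)\bigr]+\br^m D_ID_J\phi,$$
and write $\calJ[\br+\phi\br^m]=\det\wt g_{IJ}\cdot\bigl(1+\mathrm{tr}(\wt g^{-1}\epsilon)+\sum_{k\ge2}e_k(\wt g^{-1}\epsilon)\bigr)$, with $e_k$ the $k$-th elementary symmetric function of the eigenvalues. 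The identities above collapse the trace to $\mathrm{tr}(\wt g^{-1}\epsilon)=m(n+4-2m)\phi\,\br^{m-1}+\br^m\,\wt g^{IJ}D_ID_J\phi$. Moreover $\wt g^{IJ}D_ID_J\phi=-\wt\Delta\phi$ modulo terms of order $\br^{m-2}$: differentiating $\calJ[\br]=\det\wt g_{IJ}=-1+K\br^{m-1}$ gives $\wt g^{IJ}D_L\wt g_{IJ}=(\det\wt g_{IJ})^{-1}D_L\calJ[\br]=O(\br^{m-2})$, and this controls the difference between the contracted tractor Hessian and the Laplacian $-\wt\Delta$ of $\wt g_{IJ}$. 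Multiplying by $\calJ[\br]=-1+K\br^{m-1}$ --- whose leading $-1$ flips the sign of the $\br^m$-term --- and using $K\br^{m-1}\cdot\mathrm{tr}(\wt g^{-1}\epsilon)=O(\br^{2m-2})$, one obtains $\calJ[\br+\phi\br^m]=-1+(K-m(n+4-2m)\phi)\br^{m-1}+O(\br^m)$, with the additional term $+\br^m\wt\Delta\phi$ when $m\ge3$.

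The step I expect to be the main obstacle is the control of the higher symmetric functions $e_k(\wt g^{-1}\epsilon)$, $k\ge2$. Their leading contribution comes from $m(m-1)\phi\br^{m-2}\,T^I(D_J\br)$, which for $m=2$ is merely $O(1)$, so the naive bound $e_k=O(\br^{k(m-2)})$ is useless and one cannot simply say the expansion stops at the right order. The resolution is that this block is nilpotent up to a power of $\br$: since $(D_K\br)T^K=2\br$, one has $\bigl[T^I(D_J\br)\bigr]^2=2\br\,T^I(D_J\br)$, so every further self-composition gains a factor $\br$; carrying this bookkeeping through, and handling the cross terms with the remaining $O(\br^{m-1})$ part of $\wt g^{-1}\epsilon$ in the same way (using $\wt g_{IJ}T^J=D_I\br$), yields $e_k(\wt g^{-1}\epsilon)=O(\br^{k(m-1)})$. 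Since $k(m-1)\ge m$ for all $k\ge2$, $m\ge2$, these are $O(\br^m)$, which suffices for (ii); and $k(m-1)\ge m+1$ once $k\ge2$, $m\ge3$, which suffices for (iii). Assembling the trace computation with these estimates completes the argument.
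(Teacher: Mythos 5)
Your argument is correct, but it takes a genuinely different route from the paper. The paper fixes a projective scale, writes $\calJ$ as the determinant of the explicit $(n+2)\times(n+2)$ matrix $\bigl(\begin{smallmatrix}2\br&\nabla_j\br\\ \nabla_i\br&\nabla_i\nabla_j\br+2\br P_{ij}\end{smallmatrix}\bigr)$, and reaches \eqref{var-MA1}--\eqref{var-MA3} by elementary row and column operations (dividing out $1+m\phi\br^{m-1}$, clearing the first row and column, etc.). You instead work scale-invariantly: the Leibniz rule for $D_I$, the identities $\wt g_{IJ}T^J=D_I\br$ and $T^ID_If=wf$, and the expansion of $\det(1+\wt g^{-1}\epsilon)$ in elementary symmetric functions. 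The step you flag as the main obstacle is indeed the only delicate point of your version, and your resolution is right: writing $\wt g^{IK}\epsilon_{KJ}=N+P$ with $N=m(m-1)\phi\br^{m-2}T^I(D_J\br)$ of rank one, the mixed terms in $e_k$ involve at most one factor of $N$, and every trace $\mathrm{tr}(NP^j)$ gains the needed extra powers of $\br$ from $(D_K\br)T^K=2\br$ and from $P$ mapping $T^I$ back into $O(\br^{m-1})T^I+O(\br^m)(\cdots)$; this gives $e_k=O(\br^{k(m-1)})$, which is exactly what \eqref{var-MA2} and \eqref{var-MA3} require. This bookkeeping plays the role of the paper's row/column eliminations. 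Both proofs invoke the same fact to convert $-\wt g^{IJ}D_ID_J\phi$ into $\wt\Delta\phi$, namely that $\wt g^{IJ}\wt\Gamma_{IJ}{}^K$ is small; note that your derivation of this from differentiating $\det\wt g_{IJ}$ tacitly uses the total symmetry of $D_LD_ID_J\br$ (as does the paper's remark), and that it yields $O(\br^{m-2})$, which is why the refinement \eqref{var-MA3} needs $m\ge3$. What your approach buys is manifest projective invariance and a mechanism that isolates exactly where each power of $\br$ comes from; what the paper's buys is brevity, since the rank structure is disposed of in two determinant-preserving operations.
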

\begin{proof}
(i) Using row and column transformations, we have
\begin{align*}
\calJ[f\br]&=\det
\begin{pmatrix}
0 & f\nabla_j\br \\
f\nabla_i\br & f\nabla_i\nabla_j\br+2\nabla_{(i}f\nabla_{j)}\br
\end{pmatrix}+O(\br) \\
&=\det\begin{pmatrix}
0 & f\nabla_j\br \\
f\nabla_i\br & f\nabla_i\nabla_j\br
\end{pmatrix}+O(\br) \\
&=\det\begin{pmatrix}
2f\br & f\nabla_j\br \\
f\nabla_i\br & f(\nabla_i\nabla_j\br+2\br P_{ij})
\end{pmatrix}+O(\br) \\
&= f^{n+2}\calJ[\br]+O(\br).
\end{align*}
Thus we obtain \eqref{var-MA1}. \\
(ii) We note that $\wt\Delta\phi=-\wt g^{IJ}D_ID_J\phi+O(\br)$ holds, since the Christoffel symbols $\wt\Gamma_{IJ}{}^K$ satisfy $\wt g^{IJ}\wt\Gamma_{IJ}{}^K=O(\br)$ by the Monge--Amp\`ere equation. 
In the following calculation, we compute modulo $O(\br^{m+1})$ for $m\ge 3$ and modulo $O(\br^2)$ for $m=2$. First, we divide each entry in the matrix $D_ID_J(\br+\phi\br^m)$ by $1+m\phi\br^{m-1}$ and perform some row and column transformations to obtain 
\begin{align*}
\calJ[\br+\phi\br^m]
&\equiv(1+m\phi\br^{m-1})^{n+2} \\
&\quad\cdot
\det\begin{pmatrix}
2\br+2(1-m)\phi\br^m & \bigl(1-2m(m-1)\phi\br^{m-1}\bigr)\nabla_j\br \\
\quad & +(1-2m)\br^m\nabla_j\phi \\
\quad & \quad \\
\nabla_i\br+(1-2m)\br^m\nabla_i\phi & \nabla_i\nabla_j\br+2\br P_{ij} \\
\quad&  +\br^m\bigl(\nabla_i\nabla_j\phi+2(1-m)\phi P_{ij}\bigr)
\end{pmatrix}.
\end{align*}
Then, dividing the first row by $1-2m(m-1)\phi\br^{m-1}$, we have
\begin{align*}
\calJ[\br+\phi\br^m]&\equiv\bigl(1+m(n+2)\phi\br^{m-1}\bigr)\bigl(1-2m(m-1)\phi\br^{m-1}\bigr) \\
&\quad\cdot\det
\begin{pmatrix}
2\br+2(1-m)(1-2m)\phi\br^m & \nabla_j\br +(1-2m)\br^m\nabla_j\phi  \\
\quad & \quad \\
\nabla_i\br+(1-2m)\br^m\nabla_i\phi & \nabla_i\nabla_j\br+2\br P_{ij} \\
\quad&  +\br^m\bigl(\nabla_i\nabla_j\phi+2(1-m)\phi P_{ij}\bigr)
\end{pmatrix} \\
&\equiv\bigl(1+m(n+4-2m)\phi\br^{m-1}\bigr)\det(\wt g_{IJ}+\br^m D_ID_J\phi) \\
&\equiv\bigl(1+m(n+4-2m)\phi\br^{m-1}\bigr)\calJ[\br](1-\br^m\wt\Delta\phi) \\
&\equiv -1+\bigl(K-m(n+4-2m)\phi\bigr)\br^{m-1}+\br^m\wt\Delta\phi.
\end{align*}
Thus we obtain \eqref{var-MA2} and \eqref{var-MA3}.
\end{proof}

\begin{prop}
There exists a defining density $\br\in\calE(2)$ such that
\begin{equation}\label{approxMA}
\calJ[\br]=\begin{cases}
-1+O(\br^{\infty}) & {\text if }\ n\ {\text is\ odd}, \\
-1+\calO\br^{n/2+1} &  {\text if }\ n\ {\text is\ even},
\end{cases}
\end{equation}
where $\calO\in\calE(-n-2)$. Moreover, such a density is unique modulo $O(\br^{\infty})$ for $n$ odd, and unique modulo $O(\br^{n/2+2})$ for $n$ even.
\end{prop}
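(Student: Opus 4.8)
The plan is to obtain $\br$ by successively correcting an arbitrary defining density with the help of Lemma \ref{MAlem}, reading off the obstruction and the uniqueness statement from the indicial factor $n+4-2m$ in \eqref{var-MA2}. First I fix any defining density $\br_0\in\calE(2)$; strict convexity of $M$ makes $D_ID_J\br_0$ Lorentzian near $M$, so $-\calJ[\br_0]$ is a positive function there, and by \eqref{var-MA1} the density $\br_1:=(-\calJ[\br_0])^{-1/(n+2)}\br_0$ satisfies $\calJ[\br_1]=-1+O(\br)$. I then run an induction on $m\ge 2$: assuming $\calJ[\br]=-1+K\br^{m-1}+O(\br^m)$ with $K\in\calE(2-2m)$, I apply \eqref{var-MA2} to $\br+\phi\br^m$ with $\phi:=K/\bigl(m(n+4-2m)\bigr)$ whenever $n+4-2m\ne 0$; the $\br^{m-1}$-term is killed, so after relabeling $\br$ we have $\calJ[\br]=-1+K'\br^{m}+O(\br^{m+1})$ with $K'\in\calE(-2m)$, closing the induction at level $m+1$. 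When $n$ is odd the factor $n+4-2m$ is odd, hence never zero, so the induction runs for all $m$ and a density whose Taylor expansion along $M$ is the formal limit (Borel's lemma) gives $\calJ[\br]=-1+O(\br^\infty)$. When $n$ is even, $n+4-2m=0$ exactly at $m=n/2+2$, so the induction proceeds for $m=2,\dots,n/2+1$ and then stalls with $\calJ[\br]=-1+O(\br^{n/2+1})$; as $\calJ[\br]+1$ vanishes to order $n/2+1$ along $M$, the density $\calO:=(\calJ[\br]+1)\,\br^{-(n/2+1)}\in\calE(-n-2)$ puts $\calJ[\br]$ into the form \eqref{approxMA}.

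For uniqueness, let $\br,\br'$ both satisfy \eqref{approxMA}. Since each is a defining density, $f:=\br'/\br$ extends to a positive smooth function near $M$, and evaluating $f^{n+2}\calJ[\br]=\calJ[\br']+O(\br)$ (from \eqref{var-MA1}) on $M$ gives $f|_M=1$, i.e.\ $\br'=\br+O(\br^2)$. I then show $\br'=\br+O(\br^{m+1})$ by induction on $m$: writing $\br'=\br+\phi\br^m+O(\br^{m+1})$ with $\phi\in\calE(2-2m)$, for $m\le n/2+1$ (any $m$ if $n$ is odd) both $\calJ[\br]$ and $\calJ[\br']$ equal $-1+O(\br^m)$, so \eqref{var-MA2} forces $m(n+4-2m)\phi\,\br^{m-1}=O(\br^m)$, whence $\phi=O(\br)$. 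For $n$ odd this continues without bound and $\br'-\br=O(\br^\infty)$; for $n$ even it continues through $m=n/2+1$ and $\br'-\br=O(\br^{n/2+2})$.

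Given Lemma \ref{MAlem}, the argument is mostly bookkeeping, and I expect the only point needing care to be that \eqref{var-MA2} is stated for an exact perturbation $\br+\phi\br^m$, whereas above it is applied to perturbations known only modulo higher order (the current density carries an $O(\br^m)$ tail in the existence step, and $\br'$ differs from $\br+\phi\br^m$ by $O(\br^{m+1})$ in the uniqueness step). Since the conclusion of \eqref{var-MA2} is itself valid only modulo $O(\br^m)$, these tails are irrelevant and a short remark suffices. The genuinely structural feature — the origin of the invariant $L$ and of the density $\calO$ — is that the indicial factor $n+4-2m$ vanishes at an admissible integer $m$, namely $m=n/2+2$, exactly when $n$ is even.
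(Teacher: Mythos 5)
Your argument is correct and follows essentially the same route as the paper: normalize with \eqref{var-MA1}, improve inductively with \eqref{var-MA2} (the induction stalling exactly at the indicial root $m=n/2+2$ when $n$ is even, which is where $\calO$ and hence $L$ originate), invoke Borel's lemma for $n$ odd, and read off uniqueness from \eqref{var-MA1} and the nonvanishing of $m(n+4-2m)$ in the admissible range. Your closing remark about absorbing the $O(\br^m)$ tails into $K$ (resp.\ into $\phi$) correctly disposes of the only technical point.
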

\begin{proof}
Take an arbitrary defining density $\br\in\calE(2)$. Since $M$ is strictly convex, we have $\calJ[\br]<0$ near $M$. Using \eqref{var-MA1} with $f=(-\calJ[\br])^{-1/(n+2)}$, we obtain a defining density $\br^\prime$ such that $\calJ[\br^\prime]=-1+O(\br^\prime)$. Then we inductively improve the approximate solution by using \eqref{var-MA2} up to $m=n/2+1$ when $n$ is even and for all $m$ when $n$ is odd. We use Borel's lemma to obtain the infinite order approximate solution for $n$ odd. The uniqueness up to the stated order also follows from \eqref{var-MA2}.
\end{proof}

We call a defining density which satisfies \eqref{approxMA} a {\it Fefferman defining density}. By Lemma \ref{MAlem}, we can show that $\calO|_M$ is independent of the choice of a Fefferman defining density and it vanishes if and only if there exists $\br$ such that $\calJ[\br]=-1+O(\br^{\infty})$. We call $\calO$ the {\it obstruction density}.

A pre-ambient metric $\wt g_{IJ}=D_ID_J\br$ is called the {\it ambient metric} if $\br$ is a Fefferman defining density.

\subsection{Volume expansion of the Blaschke metric}
Let  $\br\in\calE(2)$ be a Fefferman defining density and set $\bu=-(-2\br)^{1/2}\in\calE(1)$. We define the Blaschke metric $g_{ij}$ by \eqref{Blaschke} near the boundary and extend it to a Riemannian metric on 
$\Omega$. Then the volume expansion of $g_{ij}$ is given by the following theorem:

\begin{thm}\label{volume}
Let $\tau\in\calE(1)$ be a projective scale and put $u:=\tau^{-1}\bu\in\calE$. Then we have the expansion
\begin{equation}\label{volexp}
{\rm Vol}(\{u<-\e\}) 
=\sum_{j=0}^{\lceil n/2\rceil-1}c_{-n+2j}\e^{-n+2j}+
\begin{cases}V+o(1) & (n: {\rm odd}) \\
L\log\frac{1}{\e}+V+o(1) & (n: {\rm even}).
\end{cases}
\end{equation}
The constant term $V$ is independent of the choice of $\tau$ when $n$ is odd while $L$ is independent of 
the choice of $\tau$ when $n$ is even.
\end{thm}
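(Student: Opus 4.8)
The plan is to reduce the volume expansion to a Fefferman-Gray-Ellwood-type analysis near the boundary using the structure of the Blaschke metric in a suitable normal form. First I would fix a projective scale $\tau$, pass to $u=\tau^{-1}\bu\in\calE$, and choose a boundary defining function of the form $x=(-2\tau^{-2}\br)^{1/2}=-u$, so that the strict convexity of $M$ guarantees $x$ is a genuine defining function near $M$ with $dx$ nonvanishing on $M$. In a collar $M\times[0,\varepsilon_0)$ adapted to $x$, I would compute the asymptotics of $\det g_{ij}$ as $x\to 0$. The key input is that $\br$ is a Fefferman defining density, so $\calJ[\br]=\det(D_ID_J\br)=-1+\calO\br^{n/2+1}$ (or $-1+O(\br^\infty)$ when $n$ is odd); writing $g$ in terms of $\bu$ via \eqref{Blaschke} and using $\det(\nabla_i\nabla_j\bu+P_{ij}\bu)=(-\bu)^{-(n+3)}$ up to the appropriate order, one gets that the Riemannian volume form of $g$ equals $x^{-(n+1)}$ times a smooth (for $n$ odd) or polyhomogeneous-with-a-single-$\log$-at-order-$x^{n}$ (for $n$ even) density on the collar. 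Integrating $\int_{x>\e}$ then produces exactly the claimed expansion \eqref{volexp}: the divergent terms $c_{-n+2j}\e^{-n+2j}$, the constant term $V$, and — when $n$ is even — the logarithmic term $L\log\frac1\e$ whose coefficient is the integral over $M$ of the $x^{n}$-coefficient of the expanded volume density.

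Next I would establish the invariance statements. For a change of scale $\wh\tau=e^{-\U}\tau$ we have $\wh u=\tau\wh\tau^{-1}u=e^{\U}u$, so the new defining function is $\wh x=e^{\U}x$, i.e.\ a conformal rescaling of the old one by a positive function that is smooth up to $M$. The Blaschke metric $g_{ij}$ itself is projectively invariant, hence unchanged; only the family of level sets $\{u<-\e\}$ versus $\{\wh u<-\e\}$ changes. This is precisely the situation analyzed in the conformal/Poincaré-Einstein setting: the difference $\mathrm{Vol}(\{\wh x>\e\})-\mathrm{Vol}(\{x>\e\})$ is controlled by integrating the volume density over the region between the two hypersurfaces $\{x=\e\}$ and $\{\wh x=\e\}$, which is a thin collar of width $O(\e)$. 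Expanding in $\e$, the divergent coefficients $c_{-n+2j}$ do change, and when $n$ is odd the constant term $V$ does not — the $\e^0$-term of the difference vanishes because there is no $\log$ and the boundary integrand producing it is a total divergence along $M$ (equivalently, $V$ is the finite part of a meromorphic family with no pole at $s=0$ for $n$ odd). When $n$ is even $V$ genuinely shifts, but the $\log\frac1\e$-coefficient $L$ is unchanged: the coefficient of $x^{n}$ in the volume density, integrated over $M$, transforms under $x\mapsto e^{\U}x$ by a total $M$-divergence, so its integral over the closed manifold $M$ is invariant. I would phrase this cleanly via the "volume renormalization" formalism: $L$ equals the integral over $M$ of a local invariant density built from the Fefferman defining density $\br$ (through the ambient metric $\wt g_{IJ}$), which by construction does not depend on $\tau$.

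The main obstacle I expect is the bookkeeping needed to pin down the polyhomogeneous structure of the volume density to the required order and, in particular, to see that the $x^{n-1}$ term and the non-log part are handled correctly so that the odd/even dichotomy comes out exactly as in \eqref{vol-exp-PE}. Concretely, one must show: (a) that in the collar there is an expansion $\det g_{ij}\,dx\wedge dh = x^{-(n+1)}\big(v_0+v_1 x+\cdots+v_n x^n+\cdots\big)\,dx\wedge dh$ with $v_j$ functions on $M$ for $j<n$, and with a possible $\log$ entering only at $j=n$ when $n$ is even; (b) that $v_j=0$ for $j$ odd, $j<n$ — this is the reason only the terms $\e^{-n+2j}$ appear and the step between $\e^{-1}$ and $\e^0$ (for $n$ odd) or $\e^{-2}$ and $\log$ (for $n$ even) is as stated; and (c) that the relevant coefficients are determined by the Fefferman defining density modulo the order at which it is ambiguous, so that $L$ (and $V$ for $n$ odd) is well defined. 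Point (b) is the genuinely delicate one and relies on the parity properties of the Monge--Amp\`ere expansion — precisely the even-order structure built into Lemma \ref{MAlem} and the Fefferman normalization \eqref{approxMA}; I would extract it by writing $g$ in a geodesic normal form for $x$ and using the recursion for the coefficients coming from $\calJ[\br]=-1+O(\br^{\lceil n/2\rceil+1})$, mirroring the Poincar\'e-Einstein argument of Graham and of Fefferman--Graham.
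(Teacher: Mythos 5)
Your derivation of the expansion itself follows the paper's route: the Fefferman condition gives $\det_\tau g_{ij}=(-u)^{-2n-4}(1+O(u^m))$, and writing $\tau^{-(n+2)}=v\,d\r\wedge vol_M$ in a collar built from $\r=\tau^{-2}\br$ one finds $vol_g=\bigl(\sum_j a_j u^{-n-1+2j}+O(1)\bigr)\,du\wedge vol_M$; your point (b) is then immediate because $v$ and the error terms are smooth functions of $\r=-u^2/2$ and hence even in $u$. One correction: for $n$ even the volume density has \emph{no} logarithmic term at order $x^n$. The Fefferman defining density is a smooth density (the construction is simply truncated at the obstruction order), so the density is even in $x$ up to an $O(1)$ error, and $L\log\frac{1}{\e}$ arises purely from integrating the $u^{-1}$ term. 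If an $x^n\log x$ term were genuinely present in the density, multiplying by $x^{-(n+1)}$ and integrating would produce a $(\log\e)^2$ term and the claimed expansion would fail; that part of your description must be dropped.

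The genuine gap is in the invariance argument. You correctly reduce to the difference $\mathrm{Vol}(\{\wh u<-\e\})-\mathrm{Vol}(\{u<-\e\})$, which is the integral of the volume density over the region between the two level sets, i.e.\ $\int_M\int_{-\e}^{-b(x,-\e^2/2)\e}\bigl(\sum_j a_ju^{-n-1+2j}+O(1)\bigr)\,du\,vol_M$ with $b=e^{-\U}$ re-expressed in the collar coordinates (so $b$ is smooth in $\wh\r$, hence even in $\e$). But the reasons you give for the vanishing of the relevant terms are not the right ones. For $n$ odd the mechanism is parity: each exponent $-n-1+2j$ is even, so each antiderivative $u^{-n+2j}$ is odd, and evaluating it between two endpoints that are both odd functions of $\e$ yields an odd function of $\e$ --- hence no $\e^0$ term, pointwise on $M$, with no appeal to a total divergence or to meromorphic continuation (neither of which, as stated, actually implies the vanishing). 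For $n$ even the only possible source of $\log\e$ in the difference is the $u^{-1}$ term, whose integral between $-\e$ and $-b\e$ equals $\log b(x,-\e^2/2)=O(1)$; that already proves the invariance of $L$. Your alternative claim --- that the $x^n$-coefficient of the density changes by an exact divergence on $M$ under rescaling --- is a far stronger assertion (the analogue of Graham's invariance of the integrated $v^{(n)}$ coefficient), which you neither prove nor need; as written it leaves the invariance of $L$ unestablished.
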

We call the constant term $V$ the {\it renormalized volume} of $g$.
\begin{proof}[Proof of Theorem \ref{volume}]
The approximate Monge--Amp\`{e}re equation \eqref{approxMA} is equivalent to 
$$
(-u)^{n+3}\det{}_\tau(\nabla_i\nabla_j u+P_{ij}u)=1+O(u^m),
$$
where $m=n+2$ for $n$ even and $m=\infty$ for $n$ odd. Thus we have 
$$
\det{}_\tau g_{ij}=(-u)^{-2n-4}(1+O(u^m)).
$$
Near the boundary we identify $\Omega$ with the product $M\times (-\e_0, 0)$, where the second component is given by $\r=\tau^{-2}\br$. 
Take a volume form $vol_M$ on $M$, and write $\tau^{-(n+2)}=v d\r\wedge vol_M$ with 
$v\in C^{\infty}(M\times (-\e_0, 0])$. Then since $v$ is even in $u$, we have
\begin{align*}
vol_g&=(\det{}_\tau g_{ij})^{1/2}\tau^{-(n+2)} \\
&=(-u)^{-n-2}(1+O(u^m))v d\r\wedge vol_M \\
&=\Bigl(\,\sum_{j=0}^{[n/2]}a_{j}u^{-n-1+2j}+O(1)\Bigr)du\wedge vol_M.
\end{align*}
Integrating both sides, we obtain the desired volume expansion. 

Let $\wh{\tau}=e^{-\U}\tau$ be another projective scale, and set $\wh\r={\wh\tau}^{-2}\br, \wh u={\wh\tau}^{-1}\bu$. Near the boundary, $u$ and $\wh u$ are related as $u=b(x, \wh\r\,)\wh u$ with some
$b\in C^{\infty}(M\times(-\wh\e_0, 0])$. Then the difference of the volumes of sublevel sets is 
\begin{multline*}
{\rm Vol}(\{\wh u<-\e\})-{\rm Vol}(\{u<-\e\}) \\
=\int_{-\e}^{-b(x, -\frac{1}{2}\e^2)\e}\int_M \Bigl(\,\sum_{j=0}^{[n/2]}a_{j}u^{-n-1+2j}+O(1)\Bigr)du\wedge vol_M.
\end{multline*}
When $n$ is odd, we can see that this integral is odd in $\e$ so it does not contain the constant term. Therefore, $V$ is invariant under rescaling of $\tau$.
When $n$ is even, the difference does not contain the logarithmic term, so $L$ is invariant.
\end{proof}

\section{Relation to geometry on the boundary}
\subsection{Geometry of hypersurfaces in projective manifolds}
Let $(N, [\nabla])$ be an oriented projective manifold of dimension $n+1$, and let $\Omega\subset N$ be a relatively compact domain with strictly convex boundary $M$. We assume that the projective structure $[\nabla]$ is locally flat. We review the geometric structure induced on the hypersurface $M$; see, e.g., \cite{NS} for details.

We fix a projective scale $\tau\in\calE(1)$ and a defining function $\r$. Take a vector field $\xi\in\Gamma(M, TN)$ with $d\r(\xi)>0$. Then according to the decomposition $TN|_M=TM\oplus\R\xi$, we have
\begin{align*}
\nabla_X Y&=\nabla^\xi_X Y-h(X, Y)\xi, \\
\nabla_X \xi&=S(X)+\eta(X)\xi          
\end{align*}
for $X, Y\in\Gamma(TM)$. Since $\nabla$ is torsion-free, $\nabla^\xi$ defines a torsion-free affine connection on $TM$ and $h$ becomes a symmetric 2-tensor, called the {\it affine second fundamental form}. Moreover, strict convexity of $M$ implies that $h$ is positive definite, so it is  called the {\it affine metric}. We denote the Levi--Civita connection of $h$ and its curvature by $\nabla^h$ and $R^h$ respectively. The endomorphism $S$ is called the {\it affine shape operator}. These quantities depend on the choice of the transverse field $\xi$. However we can take $\xi$ canonically by imposing some normalization conditions:
\begin{lem}\label{affine-normal}
There exists a unique $\xi\in\Gamma(M, TN)$ with $d\r(\xi)>0$ such that
$\bigl(\xi\lrcorner\,\tau^{-(n+2)}\bigr)|_{TM}=vol_h$ and $\eta=0$.
\end{lem}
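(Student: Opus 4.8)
The plan is to reduce the two normalization conditions to pointwise equations by starting from an arbitrary transverse field and tracking how the induced data change. Fix any $\xi_0\in\Gamma(M,TN)$ with $d\r(\xi_0)>0$; by the displayed decompositions it determines an affine connection $\nabla^{\xi_0}$ on $TM$, a positive definite affine metric $h_0$, a shape operator $S_0$, and a one-form $\eta_0$. Every transverse field with positive $d\r$-value is uniquely of the form $\xi=\phi\,(\xi_0+Z)$ with $\phi\in C^{\infty}(M)$, $\phi>0$, and $Z\in\Gamma(TM)$, and $d\r(\xi)=\phi\,d\r(\xi_0)>0$ holds automatically since $Z$ is tangent to $M=\r^{-1}(0)$. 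Using torsion-freeness of $\nabla$ and comparing the $TM$- and $\xi$-components in the two decompositions, I would first establish the transformation laws: replacing $\xi_0$ by $\xi_0+Z$ leaves the affine metric unchanged and sends $\eta_0$ to $\eta_0-h_0(Z,\cdot\,)$, while rescaling by $\phi$ produces affine metric $\phi^{-1}h_0$ and one-form $\eta_0-h_0(Z,\cdot\,)+d\log\phi$. Hence for $\xi=\phi\,(\xi_0+Z)$ one has $h=\phi^{-1}h_0$ and $\eta=\eta_0-h_0(Z,\cdot\,)+d\log\phi$.

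The key observation is that the volume normalization decouples from the tangential part. Since $\tau^{-(n+2)}$ is an $(n+1)$-form on $N$ and $Z$ is tangent to the $n$-dimensional hypersurface $M$, the contraction $\bigl(Z\lrcorner\,\tau^{-(n+2)}\bigr)|_{TM}$ vanishes, so $\bigl(\xi\lrcorner\,\tau^{-(n+2)}\bigr)|_{TM}=\phi\,\bigl(\xi_0\lrcorner\,\tau^{-(n+2)}\bigr)|_{TM}$. Writing $\bigl(\xi_0\lrcorner\,\tau^{-(n+2)}\bigr)|_{TM}=f_0\,vol_{h_0}$ with $f_0>0$ (a matter of orientation) and using $vol_h=vol_{\phi^{-1}h_0}=\phi^{-n/2}\,vol_{h_0}$, the condition $\bigl(\xi\lrcorner\,\tau^{-(n+2)}\bigr)|_{TM}=vol_h$ becomes the pointwise equation $\phi f_0=\phi^{-n/2}$, with the unique positive solution $\phi=f_0^{-2/(n+2)}$. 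With $\phi$ fixed, the condition $\eta=0$ reads $h_0(Z,\cdot\,)=\eta_0+d\log\phi$; since $h_0$ is nondegenerate this determines $Z$ uniquely (raise the index of $\eta_0+d\log\phi$ with $h_0^{-1}$). Therefore $\xi=\phi\,(\xi_0+Z)$ exists and is unique.

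I expect no serious obstacle here; the one subtlety is the apparent circularity that the affine metric $h$ appearing in the normalization itself depends on $\xi$. This is resolved precisely by the transformation laws above: $h$ scales only by the factor $\phi$, and the tangential part $Z$ drops out of $\bigl(\xi\lrcorner\,\tau^{-(n+2)}\bigr)|_{TM}$, so the two equations can be solved in succession --- first $\phi$ from the volume condition, then $Z$ from $\eta=0$ --- rather than simultaneously. (The $\xi$ so obtained is the classical affine normal field; cf.\ \cite{NS}.)
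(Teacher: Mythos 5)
Your argument is correct and is essentially the paper's own proof: both parametrize the transverse fields relative to a fixed one, derive the transformation laws $h\mapsto\phi^{-1}h$ and $\eta\mapsto\eta-h(Z,\cdot)+d\log\phi$, note that the contraction condition involves only $\phi$, and then solve for $\phi$ and $Z$ in succession using nondegeneracy of $h$. The only differences are cosmetic (the parametrization $\phi(\xi_0+Z)$ versus $\phi\xi_0+Z$, and your explicit formula $\phi=f_0^{-2/(n+2)}$).
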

\begin{proof}
We take an arbitrary $\xi\in\Gamma(M, TN)$ with $d\r(\xi)>0$. Then any other $\xi^\prime\in\Gamma(M, TN)$ such that $d\r(\xi^\prime)>0$ can be written in the form $\xi^\prime=\phi\xi+Z$ with $\phi>0$ and $Z\in\Gamma(TM)$. A calculation shows that the corresponding tensor fields are given by $h^\prime=\phi^{-1}h$ and $\eta^\prime(X)=\eta(X)+\phi^{-1}X\phi-\phi^{-1}h(X, Z)$. Thus we have 
$vol_{h^{\prime}}=\phi^{-n/2}vol_h$. On the other hand we have $\bigl(\xi^{\prime}\lrcorner\,\tau^{-(n+2)}\bigr)|_{TM}=\phi\bigl(\xi\lrcorner\,\tau^{-(n+2)}\bigr)|_{TM}$, so there is a unique $\phi$ such that $\bigl(\xi^\prime\lrcorner\,\tau^{-(n+2)}\bigr)|_{TM}=vol_{h^\prime}$. Then since $h$ is non-degenerate, the condition $\eta^\prime=0$ uniquely determines $Z$.
\end{proof}
The transverse vector field $\xi$ in Lemma \ref{affine-normal} is called the {\it affine normal field} associated with the projective scale $\tau$. The transformation laws under rescaling of $\tau$ are given by the following
\begin{prop}\label{affine-transform}
Let $\wh\tau=e^{-\U}\tau$ be another projective scale. Then it holds that 
\begin{align*}
\wh\xi&=e^{-2\U}(\xi-{\rm grad}_h\U), \\
\wh h&=e^{2\U}h, \\
\wh\nabla^{\wh\xi}_X Y&=\nabla^\xi_ X Y+d\U(X)Y+d\U(Y)X-h(X, Y)\,{\rm grad}_h\U, \\
\wh S(X)&=e^{-2\U}\bigl(S(X)+(\xi\U-|d\U|_h^2)X+d\U(X)\,{\rm grad}_h\U-\nabla^\xi_X {\rm grad}_h\U\bigr),
\end{align*}
where ${\rm grad}_h\U$ denotes the gradient vector field of $\U|_M$ with respect to the metric $h$.
\end{prop}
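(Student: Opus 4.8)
The plan is to factor the rescaling $\wh\tau=e^{-\U}\tau$ into two independent moves: first replace the representative connection $\nabla$ by $\wh\nabla=\nabla+d\U$ while keeping the transverse field $\xi$ fixed, then change the transverse field while keeping the new connection fixed.

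For the first move, one has $\wh\nabla_XY=\nabla_XY+d\U(X)Y+d\U(Y)X$ for $X,Y\in\Gamma(TM)$ and $\wh\nabla_X\xi=\nabla_X\xi+d\U(X)\xi+(\xi\U)X$. Decomposing these along $TN|_M=TM\oplus\R\xi$ and using $\eta=0$, I read off that relative to the pair $(\wh\nabla,\xi)$ the affine fundamental form is still $h$, the induced connection is $\nabla^\xi_XY+d\U(X)Y+d\U(Y)X$, the shape operator is $S+(\xi\U)\,{\rm id}$, and the connection $1$-form is $d\U$ (no longer zero).

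For the second move I would first record the general rule for replacing a transverse field $\xi$ by $\xi'=\phi\xi+Z$, $\phi>0$, $Z\in\Gamma(TM)$, with a fixed torsion-free connection; a direct expansion of $\nabla_XY$ and $\nabla_X\xi'$, as in the proof of Lemma \ref{affine-normal}, gives $h'=\phi^{-1}h$, $\nabla^{\xi'}_XY=\nabla^\xi_XY+\phi^{-1}h(X,Y)Z$, $\eta'(X)=\phi^{-1}X\phi+\eta(X)-\phi^{-1}h(X,Z)$, and $S'(X)=\phi S(X)+\nabla^\xi_XZ-\phi^{-1}\bigl(X\phi+\phi\eta(X)-h(X,Z)\bigr)Z$. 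Feeding the intermediate data from the first move into these and imposing the two normalizations of Lemma \ref{affine-normal} pins down $\phi$ and $Z$. Because the interior product of the top form $\tau^{-(n+2)}$ with a vector tangent to $M$ restricts to zero on $\Lambda^nT^\ast M$, the $Z$-part of $\wh\xi\lrcorner\wh\tau^{-(n+2)}$ drops out, so $(\wh\xi\lrcorner\wh\tau^{-(n+2)})|_{TM}=\phi\,e^{(n+2)\U}vol_h$ while $vol_{\wh h}=\phi^{-n/2}vol_h$; equating them forces $\phi=e^{-2\U}$. Then $\wh\eta=0$ reads $h(X,Z)=-e^{-2\U}d\U(X)$, i.e. $Z=-e^{-2\U}{\rm grad}_h\U$, so $\wh\xi=e^{-2\U}(\xi-{\rm grad}_h\U)$; since ${\rm grad}_h\U\in\Gamma(TM)$ one still has $d\r(\wh\xi)=e^{-2\U}d\r(\xi)>0$, so by uniqueness this is the affine normal field of $\wh\tau$. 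Substituting these $\phi$ and $Z$ back into the transformation formulas above and simplifying — the coefficient of $Z$ in the formula for $S'$ collapses to $0$, and expanding $\nabla^\xi_XZ$ by the Leibniz rule produces the $|d\U|_h^2$- and ${\rm grad}_h\U$-terms — yields the stated expressions for $\wh h$, $\wh\nabla^{\wh\xi}$ and $\wh S$.

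Everything here is elementary; the main obstacle is purely bookkeeping — keeping straight which connection, metric and transverse field each symbol refers to at each stage — together with the one useful observation that contracting a top form with a vector tangent to $M$ and restricting to $TM$ vanishes, which is exactly what makes the volume normalization determine $\phi$ so cleanly.
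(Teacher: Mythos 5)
Your proof is correct; I verified the two-step factorization in detail, and in particular the normalizations do give $\phi=e^{-2\U}$, $Z=-e^{-2\U}{\rm grad}_h\U$, the coefficient of $Z$ in the shape-operator formula is exactly $-\wh\eta(X)=0$, and expanding $\nabla^{(1)}_X(-e^{-2\U}{\rm grad}_h\U)$ reproduces the stated $\wh S$. The paper states this proposition without proof, but your argument is the natural completion of the computation already begun in the proof of Lemma \ref{affine-normal} (where $h'=\phi^{-1}h$ and the transformation of $\eta$ are derived), combined with the standard decomposition of $\wh\nabla=\nabla+d\U$ along $TM\oplus\R\xi$; nothing is missing.
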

Since the affine metric transforms conformally under a change of $\tau$, the conformal structure $[h]$ is well-defined. Recall that the conformal density bundle on $M$ is defined by $\calE[1]=(\wedge^{n} TM)^{-1/n}$. Using affine metric we can define a canonical isomorphism $\calE(1)|_M\rightarrow\calE[1]$ by 
$\tau|_M\mapsto (vol_h)^{-1/n}$. Thus we identify $\calE(w)|_M$ with $\calE[w]$ for each $w$. The conformally invariant weighted tensor $\bh_{\a\b}=\tau^2|_M h_{\a\b}\in\calE_{(\a\b)}[2]$ is called the {\it conformal metric}. We raise and lower the indices with $\bh_{\a\b}$ and its inverse $\bh^{\a\b}$. We put conformal weight $-2$ on the affine shape operator and write $\bS_\a{}^\b=\tau|_M^{-2}S_\a{}^\b\in\calE_\a{}^\b[-2]$. Note that $\bS_\a{}^\b$ depends on 1-jet of the scale along the boundary.

From the transformation laws of the induced connection $\nabla^\xi$ and the Levi--Civita connection $\nabla^h$, it follows that their difference $A_{\a\b}{}^\g=\nabla^h_\a-\nabla^\xi_\a$, called the {\it Fubini--Pick form}, is a conformally invariant tensor. This tensor is a fundamental invariant of $M$, and it is a classical theorem that the Fubini--Pick form vanishes identically if and only if $M$ is locally equivalent to the sphere (see \cite{NS} for a proof):
\begin{thm}[Blaschke--Pick--Berwald]
Let $M$ be a strictly convex hypersurface in a locally flat projective manifold $N$ of dimension $n+1$. Then the Fubini-Pick form vanishes identically if and only if, for each point $p\in M$, there exist a neighborhood $U\subset N$, an open subset $V\subset\R^{n+1}$, and a projective equivalent map $\varphi:U\rightarrow V$ such that $\varphi(U\cap M)=V\cap S^n$. 
\end{thm}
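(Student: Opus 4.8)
The plan is to prove the two implications separately; the first is essentially a computation and the second is the substantive part.

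For the ``if'' direction, one first checks that the round sphere $S^n\subset\R\mathbb{P}^{n+1}$ has identically vanishing Fubini--Pick form. This follows from the explicit Monge--Amp\`ere solution $\bu=-(1-|x|^2)^{1/2}$ on $\Omega=\mathbb{B}^{n+1}$ (the Blaschke metric is then the Klein model hyperbolic metric, and a direct computation of the affine normal and of $\nabla^\xi$ along $|x|^2=1$ yields $A_{\a\b\g}=0$); alternatively, the hypersurface $|x|^2=1$ is affinely homogeneous, its affine automorphism group being the linear group $O(n+1)$, and since $A_{\a\b\g}$ would be an $O(n)$-invariant element of the trace-free symmetric cube $S^3_0(\R^n)^\ast$ at each point while this representation contains no nonzero invariant, $A_{\a\b\g}$ vanishes identically. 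Since $A_{\a\b}{}^\g=\nabla^h_\a-\nabla^\xi_\a$ is by construction a projective invariant of the pair $(N,M)$, a projective equivalence $\varphi:U\to V$ with $\varphi(U\cap M)=V\cap S^n$ pulls the vanishing on $V\cap S^n$ back to the vanishing on $U\cap M$.

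For the ``only if'' direction, suppose $A_{\a\b\g}\equiv 0$. Since $[\nabla]$ is locally flat, the projective tractor bundle is flat near any $p\in M$, and trivializing it by a parallel frame produces a developing map identifying a neighborhood $U$ of $p$ with an open subset of $\R\mathbb{P}^{n+1}$ carrying the canonical projective structure, under which $M\cap U$ becomes a strictly convex hypersurface. Passing to an affine chart $\R^{n+1}\subset\R\mathbb{P}^{n+1}$ and writing $M\cap U$ locally as a graph $x^{n+1}=f(x^1,\dots,x^n)$, the affine metric $h_{\a\b}$ is a conformal multiple of the Euclidean Hessian of $f$, and for the Blaschke (equiaffine) normalization $A_{\a\b\g}$ is the totally symmetric, $h$-trace-free cubic form built from the third derivatives of $f$; thus $A_{\a\b\g}\equiv 0$ is equivalent to $\nabla^\xi=\nabla^h$, i.e.\ the induced connection is the Levi--Civita connection of the affine metric.

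It remains to show $\nabla^\xi=\nabla^h$ forces $M\cap U$ to be a piece of a quadric. As the ambient connection on $\R^{n+1}$ is flat, the Gauss and Codazzi equations for the affine pair $(h,S)$ read $R^h_{\a\b\g\d}=h_{\b\g}S_{\a\d}-h_{\a\g}S_{\b\d}$ and $\nabla^h_{[\a}S_{\b]}{}^\g=0$ with $S_{\a\b}$ moreover $h$-symmetric, and standard manipulations using these together with the contracted second Bianchi identity for $h$ show $S_\a{}^\b=\lam\,\d_\a{}^\b$ with $\lam$ locally constant; integrating the resulting structure equations (a Bonnet-type argument, subsumed in the projective Bonnet theorem proved later in the paper) exhibits $M\cap U$ as an open piece of a quadric in $\R^{n+1}$. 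Positive definiteness of the affine metric then restricts the quadric to an ellipsoid, an elliptic paraboloid, or a sheet of a two-sheeted hyperboloid, each of which is carried onto an open subset of $S^n$ by a projective transformation of $\R\mathbb{P}^{n+1}$; composing with the developing map produces the desired $\varphi$. The main obstacle is this last integration step: deducing $S_\a{}^\b=\lam\,\d_\a{}^\b$ genuinely requires the Codazzi equation for $S$ (especially when $n=2$, where the Gauss equation imposes no condition on $S$ directly), and reconstructing the explicit quadric and the projective map to $S^n$ is a true Bonnet-type integration rather than a formal identity; for a complete treatment one may simply invoke \cite{NS}.
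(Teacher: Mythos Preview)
The paper does not prove this theorem; it is stated as a classical result and the reader is referred to \cite{NS} for a proof. Your proposal, which in the end also defers to \cite{NS}, therefore matches the paper in the only relevant sense: both treat Blaschke--Pick--Berwald as an imported classical fact.

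That said, your sketch of the classical argument is essentially correct, with one small correction worth noting. You write that for $n=2$ the Gauss equation imposes no condition on $S$ directly. In fact, once $A=0$ the induced connection $\nabla^\xi$ coincides with $\nabla^h$, so its curvature inherits the full Riemann symmetries, in particular $R^h_{\a\b\g\d}=R^h_{\g\d\a\b}$. Imposing this pair symmetry on the Gauss equation $R^h_{\a\b\d\g}=h_{\b\g}S_{\a\d}-h_{\a\g}S_{\b\d}$ and contracting already forces $S_{\a\b}=(\mathrm{tr}\,S/n)\,h_{\a\b}$ for every $n\ge 2$. What genuinely requires the Codazzi equation (or, for $n\ge 3$, the contracted Bianchi identity via the Einstein condition) is the constancy of $\lambda=\mathrm{tr}\,S/n$; for $n=2$ this is where Codazzi is indispensable. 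Also note that invoking the projective Bonnet theorem (Theorem~\ref{proj bonnet}) is a forward reference in the paper, but there is no circularity: the proof of Theorem~\ref{proj bonnet} does not use Blaschke--Pick--Berwald.
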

As a consequence of the flatness of the projective structure $[\nabla]$, $A_{\a\b\g}$ and $\bS_{\a\b}$ have the following symmetries:
\begin{prop}\label{sym-A-S}
The Fubini--Pick form $A_{\a\b\g}$ and the affine shape operator $\bS_{\a\b}$ satisfy
$$
A_{\a\b\g}=A_{(\a\b\g)}, \qquad A_{\a\g}{}^\g=0, \qquad \bS_{[\a\b]}=0.
$$
\end{prop}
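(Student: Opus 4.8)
The plan is to recover the classical Gauss--Codazzi--Ricci structure equations for the affine normal field and to read off the three symmetries from their normal components. I would fix the scale $\tau$ used to define $A_{\a\b\g}$ and $\bS_\a{}^\b$, together with the corresponding Ricci-symmetric connection $\nabla$ (so $\nabla\tau=0$) and the affine normal $\xi$ from Lemma \ref{affine-normal}. Local flatness gives $C_{ij}{}^k{}_l=0$, hence $R_{ij}{}^k{}_l=2\,\d_{[i}{}^kP_{j]l}$, so for $X,Y,Z\in\Gamma(TM)$ both $R(X,Y)Z$ and $R(X,Y)\xi$ are tangent to $M$; this is the structural fact everything will rest on.

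First I would dispose of the two easy symmetries. Since $A_{\a\b}{}^\g$ is the difference $\nabla^h_\a-\nabla^\xi_\a$ of two torsion-free connections, it is symmetric in $\a,\b$. For the trace, I would differentiate the induced volume form: using $\nabla\tau^{-(n+2)}=0$, the Weingarten identity $\nabla_X\xi=S(X)\in TM$ (valid because $\eta=0$), and the Gauss formula $\nabla_XY=\nabla^\xi_XY-h(X,Y)\xi$, one checks that $\nabla^\xi$ preserves $(\xi\lrcorner\,\tau^{-(n+2)})|_{TM}$, which by the normalization in Lemma \ref{affine-normal} equals $vol_h$. As $\nabla^h$ also preserves $vol_h$, the difference tensor is trace-free: $A_{\a\g}{}^\g=0$.

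For the remaining symmetries I would expand the curvature. Inserting the Gauss and Weingarten formulas into $R(X,Y)Z=\nabla_X\nabla_YZ-\nabla_Y\nabla_XZ-\nabla_{[X,Y]}Z$, a short computation shows that the component along $\xi$ equals $(\nabla^\xi_Yh)(X,Z)-(\nabla^\xi_Xh)(Y,Z)$; since the left-hand side is tangential, this gives the Codazzi equation $(\nabla^\xi_\a h)_{\b\g}=(\nabla^\xi_\b h)_{\a\g}$. The same manipulation applied to $R(X,Y)\xi$ yields a $\xi$-component equal to $h(Y,S(X))-h(X,S(Y))$, which must therefore vanish, i.e. $\bS_{[\a\b]}=0$. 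Finally, writing $C_{\a\b\g}:=(\nabla^\xi_\a h)_{\b\g}$ and using $\nabla^h h=0$, $\nabla^\xi=\nabla^h-A$ to get $C_{\a\b\g}=A_{\a\b\g}+A_{\a\g\b}$, I note that $C$ is symmetric in $\b,\g$ (since $h$ is) and, by Codazzi, in $\a,\b$, hence totally symmetric; subtracting $C_{\a\b\g}=C_{\g\b\a}$ and cancelling the $A_{\a\g\b}=A_{\g\a\b}$ terms gives $A_{\a\b\g}=A_{\g\b\a}$. Combined with symmetry in $\a,\b$, the transpositions $(\a\b)$ and $(\a\g)$ generate $S_3$, so $A_{\a\b\g}$ is totally symmetric.

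I expect the only genuine work to be the two index computations extracting the $\xi$-components of $R(X,Y)Z$ and $R(X,Y)\xi$. The point to watch is that even in a non-affine scale, where $\nabla$ carries curvature $2\,\d_{[i}{}^kP_{j]l}$, those $P$-terms sit entirely in the tangential (Gauss) part and do not disturb the normal components from which Codazzi and the symmetry of $S$ are read off. As an alternative one could simply pass, via local flatness, to an affine scale in which $M$ is a convex hypersurface in $\R^{n+1}$ with its Blaschke affine structure and quote the classical symmetries from \cite{NS}, checking that the conformally weighted $A_{\a\b\g}$ and $\bS_\a{}^\b$ inherit them.
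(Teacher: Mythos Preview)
Your proposal is correct and follows essentially the same route as the paper's proof: both extract the normal ($\xi$-) components of the ambient curvature restricted to $M$, use $R_{ij}{}^k{}_l=2\,\d_{[i}{}^kP_{j]l}$ to see that these components vanish, read off the Codazzi equation $\nabla^\xi_{[\a}h_{\b]\g}=0$ and $\bS_{[\a\b]}=0$, and then show $\nabla^\xi vol_h=0$ from $\nabla\tau^{-(n+2)}=0$ to get $A_{\a\g}{}^\g=0$. The only cosmetic difference is that the paper packages the curvature computation in connection/curvature forms (computing $\Omega_\a{}^\infty|_{TM}$ and $\Omega_\infty{}^\infty|_{TM}$) and writes the conclusion as $A_{\a\b}{}^\g=(1/2)h^{\g\mu}\nabla^\xi_\a h_{\b\mu}$, whereas you work directly with $R(X,Y)Z$, $R(X,Y)\xi$ and spell out the $C_{\a\b\g}=A_{\a\b\g}+A_{\a\g\b}$ symmetrization; the content is the same.
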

\begin{proof}
Take a local frame $\{e_\a\}$ for $TM$. Then $\{e_\infty=\xi, e_\a\}$ forms a local frame for $TN|_M$. Let $\{\th^\infty, \th^\a\}$ be the dual frame. Since $\xi$ is affine normal, the connection forms of $\nabla$ satisfy
$$
\omega_\a{}^\b|_{TM}=\omega^\xi{}_\a{}^\b, \quad  \omega_\infty{}^\b|_{TM}=S_\g{}^\b\th^\g, \quad
\omega_\a{}^\infty|_{TM}=-h_{\a\g}\th^\g,  \quad \omega_\infty{}^\infty|_{TM}=0,
$$
where $\omega^\xi{}_\a{}^\b$ is the connection forms of $\nabla^\xi$. Thus the curvature forms satisfy
$$
\Omega_\a{}^\infty|_{TM}=-\nabla^\xi_\mu h_{\g\a}\th^\mu\wedge\th^\g, \qquad
\Omega_\infty{}^\infty|_{TM}=-h_{\mu\g}S_{\a}{}^\mu\th^\g\wedge\th^\a.
$$
On the other hand, from $C_{ij}{}^k{}_l=0$ we have $R_{ij}{}^k{}_l=2\d_{[i}{}^k P_{j]l}$ so that 
$\Omega_\a{}^\infty|_{TM}=\Omega_\infty{}^\infty|_{TM}=0$. Therefore we obtain $\nabla^\xi_{[\mu} h_{\g]\a}=0$ and $\bS_{[\a\g]}=0$. It follows that $A_{\a\b}{}^\g=(1/2)h^{\g\mu}\nabla^\xi_\a h_{\b\mu}$ and $A_{\a\b\g}\in\calE_{(\a\b\g)}[2]$. To prove that $A_{\a\b\g}$ is trace-free, it suffices to show $\nabla^\xi vol_h=0$.
For any $X, X_1, \dots, X_n\in\Gamma(TM)$, we have
\begin{align*}
0&=(\nabla_X\tau^{-(n+2)})(\xi, X_1, \dots, X_n) \\
&=X\cdot\tau^{-(n+2)}(\xi, X_1,\dots, X_n)-\tau^{-(n+2)}(\nabla_X \xi, X_1,\dots,X_n) \\
&\quad-\sum_{i=1}^{n}\tau^{-(n+2)}(\xi, X_1,\dots, \nabla_X X_i, \dots, X_n) \\
&=X\cdot vol_h(X_1,\dots,X_n)-\sum_{i=1}^{n}vol_h (X_1,\dots, \nabla^\xi_X X_i, \dots, X_n) \\
&=(\nabla^\xi _X vol_h)(X_1,\dots,X_n).
\end{align*}
Thus $A_{\a\b\g}$ is trace-free.
\end{proof}

\subsection{Volume expansion and affine invariants}\label{vol-exp-affine}
By an {\it affine invariant} of the boundary, we mean a linear combination of complete contractions of $h_{\a\b}$, $R^h_{\a\b\g\mu}$, $A_{\a\b\g}$, $S_{\a\b}$ and their covariant derivatives with respect to $\nabla^h$ in a chosen projective scale. Recall that affine invariants depend only on 1-jet of the projective scale along the boundary. In this subsection we prove that, given a 1-jet of the projective scale along the boundary, there exists an appropriate extension of the scale such that the coefficients $c_j$ and $L$ in \eqref{volexp} can be written as the integrals of some affine invariants over $M$. 

Let $\br\in\calE(2)$ be a Fefferman defining density and $g_{ij}$ be the associated Blaschke metric on $\Omega$. Until the normalization of the projective scale is involved, we work in an arbitrary projective scale $\tau\in\calE(1)$. Set $\r=\tau^{-2}\br$ and let $\nabla\in[\nabla]$ be the representative connection such that $\nabla\tau=0$.
By strict convexity of $M$, there exists a unique $\xi\in \Gamma(TN)$ near $M$ which satisfies 
\begin{equation}\label{xi}
\xi \r=1, \qquad \xi^i X^j\nabla_i\nabla_j\r=0 \quad {\rm if}\quad X\r=0.
\end{equation}
We call $r:=\xi^i\xi^j\nabla_i\nabla_j\r$ the {\it transverse curvature}. If we take a local frame $\{e_\a\}$ for the subbundle ${\rm Ker}\, d\r\subset TN$, the tuple $\{e_\infty=\xi, e_\a\}$ forms a local frame for $TN$. Let 
$\{ \th^\infty=d\r, \th^\a\}$ be the dual frame. Then we have
\begin{equation}\label{nabla-dr}
\nabla d\r=h_{\a\b}\th^\a\otimes\th^\b+rd\r\otimes d\r,
\end{equation}
where $h_{\a\b}$ is the affine metric on each level set of $\r$. The ambient metric $\wt g_{IJ}$ satisfies 
\begin{equation}\label{g-M}
\wt g_{IJ}|_M
=\begin{pmatrix}
0 & 1 & \quad \\
1 & \boldsymbol{r}|_M & \quad \\
\quad & \quad & \bh_{\a\b}
\end{pmatrix},
\quad 
\wt g^{IJ}|_M
=\begin{pmatrix}
-\boldsymbol{r}|_M  & 1 & \quad \\
1 & 0 & \quad \\
\quad & \quad & \bh^{\a\b}
\end{pmatrix},
\end{equation}
where $\boldsymbol{r}:=\tau^{-2}r$.
\begin{lem}\label{xi-affine-normal}
The vector filed $\xi|_M$ agrees with the affine normal field.
\end{lem}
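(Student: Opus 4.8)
The plan is to show that the field $\xi$ determined near $M$ by \eqref{xi}, once restricted to $M$, satisfies the two normalizations that characterize the affine normal field in Lemma~\ref{affine-normal}: namely $\bigl(\xi\lrcorner\,\tau^{-(n+2)}\bigr)|_{TM}=vol_h$ and $\eta=0$. Since $d\r(\xi)=1>0$, the uniqueness statement in that lemma then identifies $\xi|_M$ with the affine normal field. The conceptual point is that the Monge--Amp\`ere equation $\calJ[\br]|_M=-1$ is exactly what forces the volume normalization; everything else is bookkeeping.

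I would first check $\eta=0$, which uses only the definition \eqref{xi} of $\xi$ and not the Monge--Amp\`ere equation. From the hypersurface decomposition $\nabla_X\xi=S(X)+\eta(X)\xi$ for $X\in\Gamma(TM)$ and $d\r(\xi)=1$ one gets $\eta(X)=d\r(\nabla_X\xi)$. Since $d\r$ is closed and $\nabla$ is torsion-free, $\nabla d\r$ is symmetric, so $d\r(\nabla_X\xi)=X\bigl(d\r(\xi)\bigr)-(\nabla d\r)(X,\xi)=-(\nabla d\r)(\xi,X)$, and this vanishes by the second identity in \eqref{xi}; hence $\eta\equiv0$. One similarly checks that the affine second fundamental form of $M$ relative to $\xi|_M$ equals $(\nabla d\r)|_{TM}$, which by \eqref{nabla-dr} is $h_{\a\b}$; thus the affine metric determined by $\xi|_M$ really is the one appearing in \eqref{nabla-dr}, and the symbol $h$ is unambiguous.

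For the volume condition I would take a local frame $\{e_\a\}$ of ${\rm Ker}\,d\r$ near $M$ and complete it to a frame $\{e_\infty=\xi,e_\a\}$ of $TN$, rescaling one $e_\a$ so that $\tau^{-(n+2)}(\xi,e_1,\dots,e_n)=1$, i.e.\ so that this frame is unimodular for $\tau^{-(n+2)}$. Unwinding the weighted determinant, $\calJ[\br]=\det{}_\tau\bigl[\tau^{-2}(D_ID_J\br)\bigr]$, and using $\nabla\tau=0$ and $\r=\tau^{-2}\br$, one finds from \eqref{xi} and \eqref{nabla-dr} (equivalently, from \eqref{g-M}) that $\tau^{-2}(D_ID_J\br)$ restricted to $M$ is
$$
\begin{pmatrix}
0 & 1 & 0 \\
1 & r & 0 \\
0 & 0 & h_{\a\b}
\end{pmatrix},
$$
whose determinant is $-\det(h_{\a\b})$. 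Hence $\calJ[\br]|_M=-\det(h_{\a\b})$ in this frame. Since $\br$ is a Fefferman defining density, $\calJ[\br]|_M=-1$, so $\det(h_{\a\b})=1$; as $h$ is positive definite this gives $vol_h(e_1,\dots,e_n)=1=\tau^{-(n+2)}(\xi,e_1,\dots,e_n)$, i.e.\ $\bigl(\xi\lrcorner\,\tau^{-(n+2)}\bigr)|_{TM}=vol_h$. Uniqueness in Lemma~\ref{affine-normal} then yields the claim.

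There is no serious obstacle here; the only step that demands care is the passage from the tractorial determinant $\det(D_ID_J\br)$ to the ordinary determinant $\det{}_\tau$ in the adapted frame, together with the requirement that this frame be unimodular for $\tau^{-(n+2)}$. That last requirement is precisely the normalization used to pin down the affine normal field, so the two conditions match automatically rather than competing.
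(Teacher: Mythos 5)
Your proposal is correct and follows essentially the same route as the paper: establish $\eta=0$ from the second normalization in \eqref{xi} (your Hessian-symmetry phrasing is the same computation the paper does directly), and then use $\calJ[\br]|_M=-1$ in a unimodular adapted frame to get $\det h_{\a\b}=1$ and hence $\bigl(\xi\lrcorner\,\tau^{-(n+2)}\bigr)|_{TM}=vol_h$. The only addition beyond the paper's proof is your explicit check that the second fundamental form relative to $\xi|_M$ is the $h_{\a\b}$ of \eqref{nabla-dr}, which is harmless.
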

\begin{proof}
First, using the characterization \eqref{xi} we have 
\begin{align*}
0&=X^i\xi^j\nabla_i\nabla_j\r \\
&=X^i\nabla_i(\xi^j\nabla_j \r)-X^i\nabla_i\xi^j \nabla_j\r \\
&=-d\r(\nabla_X\xi)
\end{align*}
for $X\in\Gamma(TM)$. This implies $\eta=0$. Next, if we take $\{\xi, e_\a\}$ so that the dual frame $\{d\r, \th^\a\}$ satisfies  
 $\tau^{-(n+2)}=d\r\wedge\th^1\wedge\dots\wedge\th^n$, we have
$$
-1=\calJ[\br]|_M =\det 
\begin{pmatrix}
0 & 1 & \quad \\
1 & r & \quad \\
\quad & \quad & h_{\a\b}
\end{pmatrix} 
=-\det h_{\a\b}.
$$
Thus, at the boundary it holds that 
$$
vol_h=(\det h_{\a\b})^{1/2}\th^1\wedge\dots\wedge\th^n=\th^1\wedge\dots\wedge\th^n=\xi\lrcorner\,\tau^{-(n+2)}.
$$
Therefore $\xi|_M$ is the affine normal field.
\end{proof}

Let $F: M\times (-\e_0, 0]\longrightarrow U\subset\overline{\Omega}$ be the diffeomorphism defined by the flow generated by $\xi$. Since $\xi\r=1$ and $\r|_M=0$, it holds that the second component of $F^{-1}$ equals $\r$ and $F_\ast\pa/\pa\r=\xi$. We regard differential forms on $M$ as those on $M\times(-\e_0, 0]$ by pulling them back with the first projection. Take a local frame $\{e_\a\}$ for ${\rm Ker}\,d\r$ such that the frame $\{e_\infty=\xi, e_\a\}$ is unimodular with respect to $\tau^{-(n+2)}$, and let $\{\th^\infty=d\r, \th^\a\}$ be the dual frame. Then we can write as 
\begin{align*}
F^\ast\tau^{-(n+2)} &=F^\ast(d\r\wedge\th^1\wedge\dots\wedge\th^n) \\
&=v(x, \r)\,d\r\wedge\th^1|_{TM}\wedge\dots\wedge\th^n|_{TM} \\
&=v(x, \r)\,d\r\wedge vol_h,
\end{align*}
for a function $v$ with $v(x, 0)=1$. In view of the proof of Theorem \ref{volume}, the coefficients $c_j$ and $L$ are the integrals of the Taylor coefficients in $\r$ of $v(x, \r)$ at the boundary. In order to obtain the expansion of $v$, we will find a differential equation which $v$ satisfies. Taking exterior derivatives of both sides of 
$$
F^\ast (\th^1\wedge\dots\wedge\th^n)=v(x, \r)\,vol_h,
$$
we have
\begin{equation}\label{pull-back-tau}
F^\ast d(\th^1\wedge\dots\wedge\th^n)=\frac{\pa v}{\pa\r}\,d\r\wedge vol_h.
\end{equation}
We write the connection forms of $\nabla$ as $\omega_j{}^i=\Gamma_{kj}{}^i\th^k$. Since $\nabla$ is torsion-free, it holds that $d\th^i=\th^j\wedge\omega_j{}^i$, and thus we compute as
\begin{align*}
d(\th^1\wedge\dots\wedge\th^n)&=-\omega_\g{}^\g\wedge\th^1\wedge\dots\wedge\th^n
+\Gamma_{\g\infty}{}^\g d\r\wedge\th^1\wedge\dots\wedge\th^n \\
&=(\Gamma_{\g\infty}{}^\g-\Gamma_{\infty\g}{}^\g)\,d\r\wedge\th^1\wedge\dots\wedge\th^n.
\end{align*}
From $\nabla(d\r\wedge\th^1\wedge\dots\wedge\th^n)=0$ it follows that $\Gamma_{\infty k}{}^k=0$, so
we have $\Gamma_{\infty\g}{}^\g=-\Gamma_{\infty\infty}{}^\infty=r$ by \eqref{nabla-dr}. Therefore we obtain 
\begin{equation}\label{pull-back-tau2}
F^\ast d(\th^1\wedge\dots\wedge\th^n)=F^\ast({\rm tr}S-r)v\, d\r\wedge vol_h,
\end{equation}
where $S$ is the affine shape operator on each level set of $\r$. From \eqref{pull-back-tau} and 
\eqref{pull-back-tau2}, we obtain the following differential equation for $v$:
\begin{equation}\label{eq-v}
\begin{cases}
\xi v=({\rm tr}S-r)v, \\
v|_M=1.
\end{cases}
\end{equation}
Here, we have regarded $v$ as a function on $U$. Thus the expansion of $v$ is reduced to those of $r$ and ${\rm tr}S$. 
We will work in a special local frame:
\begin{lem}
Let $\{e_\a\}$ be a local frame for $TM$ which is unimodular with respect to $h$. If one extends $\{e_\a\}$ by 
the parallel transport relative to $\nabla$ along the integral curves of $\xi$, then it becomes a local frame for 
${\rm Ker}\, d\r\subset TN$.
\end{lem}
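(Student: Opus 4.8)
The plan is to reduce the claim to the single fact that parallel transport relative to $\nabla$ along the integral curves of $\xi$ preserves the corank-one subbundle ${\rm Ker}\, d\r\subset TN$ over $U$. Granting this, linear independence of the extended tuple $\{e_\a\}$ is automatic: parallel transport along a curve is a linear isomorphism between the tangent spaces at its endpoints, so $\{e_\a\}$ stays linearly independent, and since each $e_\a$ then lies in the rank-$n$ bundle ${\rm Ker}\, d\r$, the $n$ of them span it fibrewise. (The unimodularity of $\{e_\a|_M\}$ with respect to $h$ plays no role in this particular assertion; it is used only in the subsequent computations.)

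To prove that ${\rm Ker}\, d\r$ is preserved, fix an integral curve of $\xi$ and set $f_\a:=d\r(e_\a)=e_\a\r$ along it. By the Leibniz rule for $\nabla$ and the fact that $e_\a$ is $\nabla$-parallel along the curve, $\nabla_\xi e_\a=0$, one has $\xi f_\a=(\nabla_\xi d\r)(e_\a)+d\r(\nabla_\xi e_\a)=(\nabla_\xi d\r)(e_\a)$. Next I would compute $\nabla_\xi d\r$ from the Hessian formula \eqref{nabla-dr}: by the normalization \eqref{xi} the coframe $\{\th^\infty=d\r,\th^\a\}$ is dual to $\{e_\infty=\xi,e_\a\}$, so $\th^\a(\xi)=0$ and $d\r(\xi)=1$, and contracting $\nabla d\r=h_{\a\b}\th^\a\otimes\th^\b+r\,d\r\otimes d\r$ with $\xi$ in one slot gives $\nabla_\xi d\r=r\,d\r$. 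Hence $f_\a$ satisfies the homogeneous linear ordinary differential equation $\xi f_\a=r\,f_\a$ along the integral curve, with initial value $f_\a=0$ on $M=\{\r=0\}$ because $e_\a|_M$ is tangent to $M$. Uniqueness for linear ODEs forces $f_\a\equiv0$, i.e.\ every $e_\a$ stays in ${\rm Ker}\, d\r$ throughout $U$, which completes the proof.

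There is no serious obstacle here. The only step requiring a moment's care is the identity $\nabla_\xi d\r=r\,d\r$, which hinges on the precise normalization \eqref{xi} of the transverse field $\xi$ (equivalently, on the structure \eqref{nabla-dr} of the Hessian of $\r$ in the adapted coframe); once that is in hand, the conclusion is a one-line ODE uniqueness statement.
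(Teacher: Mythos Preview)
Your proof is correct and follows essentially the same route as the paper's: set $f=d\r(e_\a)$, use $\nabla_\xi e_\a=0$ together with \eqref{nabla-dr} to obtain the linear ODE $\xi f=rf$ with $f|_M=0$, and conclude $f\equiv0$ by uniqueness. The paper's proof is just a terser version of exactly this argument.
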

\begin{proof}
We set $f=d\r(e_\a)$ for fixed $\a$. By $\nabla_\xi e_\a=0$ and \eqref{nabla-dr}, we have 
$\xi f =(\nabla_\xi d\r)(e_\a)=rf$. Thus $f$ is the solution to the ODE $\xi f=rf$, $f|_{M}=0$. It follows that $f=0$ and so $e_\a\in{\rm Ker}\,d\r$.  
\end{proof}
We call $\{e_\infty=\xi, e_\a\}$ an {\it adapted frame} for $TN$ when $\{e_\a\}$ is given as above. In an adapted frame, the connection forms of $\nabla$ are given by 
\begin{equation}\label{conn-adapted}
\begin{aligned}
\omega_\a{}^\b&=\omega^\xi{}_\a{}^\b, & \omega_\infty{}^\a&=S_\g{}^\a\th^\g+C^\a d\r, \\ 
\omega_\b{}^\infty&=-h_{\b\g}\th^\g, & \omega_\infty{}^\infty&=-r d\r,
\end{aligned}
\end{equation}
where $C^\a$ is a function, and $\omega^\xi{}_\a{}^\b$ restricts to the connection form of the induced connection on each level set and satisfies $\omega^\xi{}_\a{}^\b(\xi)=0$. 
In the sequel, we will derive equations for the computation of the expansions of $S_\a{}^\b$, $h_{\a\b}$, $C^\a$, $r$, and $P_{ij}$ in a fixed adapted frame.

First, the structure equation $\Omega_i{}^j=d\omega_i{}^j-\omega_i{}^k\wedge\omega_k{}^j$ gives
\begin{align*}
\Omega_\infty{}^\b&=\bigl(\xi \,S_\a{}^\b+S_\a{}^\g S_\g{}^\b+r S_\a{}^\b-\nabla^\xi_\a C^\b\bigr)d\r\wedge\th^\a+\nabla^\xi_\a S_\g{}^\b\th^\a\wedge\th^\g, \\
\Omega_\a{}^\infty&=\bigl(\xi\, h_{\a\b}-rh_{\a\b}+h_{\a\g}S_\b{}^\g\bigr)\th^\b\wedge d\r
-\nabla^\xi_\b h_{\a\g}\th^\b\wedge\th^\g, \\
\Omega_\infty{}^\infty&=\bigl(C^\g h_{\g\a}+r_\a\bigr)d\r\wedge\th^\a+S_\g{}^\mu h_{\mu\b}\th^\g\wedge\th^\b, \\
\Omega_\a{}^\b&\equiv\Bigl(\frac{1}{2}R^\xi_{\g\mu}{}^\b{}_\a+h_{\a\g}S_\mu{}^\b\Bigr)\th^\g\wedge\th^\mu\quad {\rm mod}\ d\r,
\end{align*}
where $R^\xi_{\g\mu}{}^\b{}_\a$ is the curvature tensor of $\nabla^\xi$. Comparing the coefficients of both sides of the above equations and using $C_{ij}{}^k{}_l=R_{ij}{}^k{}_l-2\d_{[i}{}^kP_{j]l}=0$, we have
\begin{align}
\xi\, S_\a{}^\b+S_\a{}^\g S_\g{}^\b+r S_\a{}^\b-\nabla^\xi_\a C^\b+\d_\a{}^\b P_{\infty\infty}&=0, 
\label{xi-S} \\
\nabla^\xi_{[\a}S_{\g]}{}^\b-\d_{[\a}{}^\b P_{\g]\infty}&=0, \label{nabla-S} \\
\xi\, h_{\a\b}-rh_{\a\b}+h_{\a\g}S_\b{}^\g+P_{\a\b}&=0,  \label{xi-h} \\
C^\g h_{\g\a}+r_\a-P_{\a\infty}&=0, \label{C} \\
R^\xi_{\g\mu}{}^\b{}_\a+2h_{\a[\g}S_{\mu]}{}^\b-2\d_{[\g}{}^\b P_{\mu]\a}&=0. \label{Pab}
\end{align}
The contractions of \eqref{nabla-S} and \eqref{Pab} yield
\begin{align}
(n-1)P_{\a\infty}+\nabla^\xi_\a({\rm tr}S)-\nabla^\xi_\g S_{\a}{}^\g&=0, \label{P-inf} \\
(n-1)P_{\a\b}-{\rm Ric}^\xi_{\a\b}-S_{\a\b}+({\rm tr}S)h_{\a\b}&=0, \label{Pab-Ric}
\end{align}
where ${\rm Ric}^\xi_{\a\b}$ is the Ricci tensor of $\nabla^\xi$.

Next, we take the exterior derivatives of both sides of 
$$
\Omega_\a{}^\b=d\omega_\a{}^\b-\omega_\a{}^\g\wedge\omega_\g{}^\b-\omega_\a{}^\infty\wedge\omega_\infty{}^\b
$$
and compare the coefficients of $d\r\wedge\th^\g\wedge\th^\mu$ to obtain 
$$
\d_{[\g}{}^\b \xi P_{\mu]\a}+\d_{[\g}{}^\b S_{\mu]}{}^\nu P_{\nu\a}-\d_{[\g}{}^\b\nabla^\xi_{\mu]} P_{\a\infty}
+h_{\a[\g}\d_{\mu]}{}^\b P_{\infty\infty}=0.
$$
Taking the trace of the equation, we have
\begin{equation}\label{xi-P}
\xi P_{\a\b}+S_\a{}^\g P_{\g\b}-\nabla^\xi_\a P_{\b\infty}-P_{\infty\infty}h_{\a\b}=0.
\end{equation}

We  shall derive an equation for $r$ from the Monge--Amp\`{e}re equation. If we write 
$$
\tau^{-(n+2)}=e^\varphi d\r\wedge\th^1\wedge\cdots\wedge\th^n
$$
with a function $\varphi$, we have
$$
0=\nabla_\xi \tau^{-(n+2)}=(\xi \varphi-\Gamma_{\infty j}{}^j)\tau^{-(n+2)}=(\xi \varphi+r)\tau^{-(n+2)}
$$
and thus
\begin{equation}\label{xi-phi}
\xi \varphi=-r.
\end{equation}
Let $\psi_i{}^j$ be the Levi--Civita connection forms of $g_{ij}$ with respect to the frame $\{e_\infty, e_\a\}$. Then, since
$$
vol_g=\det{}_\tau(g_{\ij})^{1/2}\,\tau^{-(n+2)}=\det{}_\tau(g_{ij})^{1/2}\,e^{\varphi}d\r\wedge\th^1\wedge\cdots\wedge\th^n,
$$
we have 
$$
\psi_k{}^k=d\log\bigl(\det{}_\tau(g_{ij})^{1/2}\,e^{\varphi}\bigr)=\frac{1}{2}d\log\det{}_\tau(g_{ij})+d\varphi,
$$
which gives
\begin{equation}\label{d-vol}
\xi\log\det{}_\tau(g_{ij})=2\psi_k{}^k(\xi)+2r. 
\end{equation}
by \eqref{xi-phi}. In the adapted frame, the Blaschke metric is given by 
$$
g=\frac{\wt h_{\a\b}}{-2\r}\,\th^a\cdot\th^\b-2P_{\a\infty}\,\th^\a\cdot d\r+\frac{1-2r\r-4\r^2P_{\infty\infty}}{4\r^2}\,d\r^2,
$$
where $\wt h_{\a\b}=h_{\a\b}+2\r P_{\a\b}$. Therefore, setting $\wt h^{\a\b}=(\wt h_{\a\b})^{-1}$, we have 
\begin{align*}
2\psi_k{}^k(\xi)&=\xi\log\det
\begin{pmatrix}
(1-2r\r-4\r^2P_{\infty\infty})/4\r^2 & -P_{\b\infty} \\
-P_{\a\infty} & \wt h_{\a\b}/(-2\r)
\end{pmatrix} \\
&=\xi\log\det
\begin{pmatrix}
(1-2r\r-4\r^2P_{\infty\infty})/4\r^2+2\r\wt h^{\g\mu}P_{\g\infty}P_{\mu\infty} & -P_{\b\infty} \\
0 &  \wt h_{\a\b}/(-2\r)
\end{pmatrix} \\
&=-(n+2)\r^{-1}+\xi\log\det(\wt h_{\a\b}) \\
&\quad +\xi\log
\bigl(1-2r\r-4\r^2P_{\infty\infty}+8\r^3\,\wt h^{\g\mu}P_{\g\infty}P_{\mu\infty}\bigr).
\end{align*}
By \eqref{xi-h} we also have 
\begin{align*}
\xi\log\det(\wt h_{\a\b})&=h^{\a\b}\xi\, h_{\a\b}+\xi\log\det\bigl(\d_\a{}^\b+2\r h^{\b\g}P_{\g\a}\bigr) \\
&=nr-{\rm tr}S-h^{\a\b}P_{\a\b}+\xi\log\det\bigl(\d_\a{}^\b+2\r h^{\b\g}P_{\g\a}\bigr).
\end{align*}
Consequently the right-hand side of \eqref{d-vol} is written as 
\begin{align*}
2\psi_k{}^k(\xi)+2r&=-(n+2)\r^{-1}+\xi\log
\bigl(1-2r\r-4\r^2P_{\infty\infty}+8\r^3\,\wt h^{\g\mu}P_{\g\infty}P_{\mu\infty}\bigr) \\
&\quad +\xi\log\det\bigl(\d_\a{}^\b+2\r h^{\b\g}P_{\g\a}\bigr)+(n+2)r-{\rm tr}S-h^{\a\b}P_{\a\b}.
\end{align*}
On the other hand, by the approximate Monge--Amp\`{e}re equation \eqref{approxMA}, we have 
$$
(-2\r)^{n+2}\det{}_\tau(g_{ij})=1-\underline{\calO}\,\r^{n/2+1},
$$
where $\underline{\calO}=\tau^{n+2}\calO$. (We will write the formulas only in the case where $n$ is even; when $n$ is odd, terms which come from $\underline{\calO}\,\r^{n/2+1}$ are replaced by $O(\r^\infty)$.) It follows that the left-hand side of \eqref{d-vol} satisfies
$$
(n+2)\r^{-1}+\xi\log\det{}_\tau(g_{ij})=-\Bigl(\frac{n}{2}+1\Bigl)\underline{\calO}\,\r^{n/2}+O(\r^{n/2+1}).
$$
As a result we obtain 
\begin{multline}\label{r}
(n+2)r+\xi\log
\bigl(1-2r\r-4\r^2P_{\infty\infty}+8\r^3\,\wt h^{\g\mu}P_{\g\infty}P_{\mu\infty}\bigr) \\
+\xi\log\det\bigl(\d_\a{}^\b+2\r h^{\b\g}P_{\g\a}\bigr)-{\rm tr}S-h^{\a\b}P_{\a\b} \\
=-\Bigl(\frac{n}{2}+1\Bigr)\underline{\calO}\,\r^{n/2}+O(\r^{n/2+1}). 
\end{multline}

Finally we consider the expansion of $P_{\infty\infty}$, which involves a normalization of the projective scale.
We set 
$$
\mathcal{S}=\Bigl\{\tau_I=
\begin{pmatrix}
\tau \\
\mu_i
\end{pmatrix}
\in\Gamma(M, \wt\calE_I)
\ \big{|}\ 
\tau>0, \ X^i(\nabla_i\tau-\mu_i)=0\  {\rm for}\ X^i\in TM \Bigr\}.
$$
Note that the condition $X^i(\nabla_i\tau-\mu_i)=0$ is independent of the choice of $\nabla\in [\nabla]$ since 
it is the projecting part of $X^i\nabla_i\tau_I$. The set $\mathcal{S}$ can be viewed as the space of 1-jets of 
projective scales at the boundary $M$, so a choice of an element of $\mathcal{S}$ determines affine invariants on $M$. For each $\tau_I\in\mathcal{S}$, we construct a projective scale near $M$ as follows: Take the affine normal field $\xi$ along the boundary, which is determined by $\tau_I$.  Then we extend $\tau_I$ by the parallel transport with respect to the tractor connection along the geodesic paths of $[\nabla]$ with initial velocity $\xi$. The top slot of the extension of $\tau_I$ is positive near $M$ so it defines a projective scale whose 1-jet is given by $\tau_I$. We call a projective scale constructed in this way a {\it normalized scale}. A global affine scale $\tau$ is a normalized scale since $D_I\tau$ is a parallel tractor by Proposition \ref{affine-scale}. 

\begin{lem}
Let $\tau\in\calE(1)$ be a normalized scale and $\nabla\in[\nabla]$ the corresponding representative connection. Let $c$ be the geodesic of $\nabla$ whose initial velocity is given by the affine normal field $\xi$.
Then the velocity vector field $V^i$ of $c$ satisfies
$$
P_{ij}V^iV^j=0,
$$
where $P_{ij}$ is the projective Schouten tensor of $\nabla$.
\end{lem}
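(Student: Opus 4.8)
The plan is to translate the defining properties of a normalized scale into a statement about the tractor parallel transport and then read off the claim by projecting. Let $\tau$ be a normalized scale with $1$-jet $\tau_I\in\mathcal S$ along $M$, and let $\xi$ be the associated affine normal field. By construction, the tractor $\tau_I$ extends (near $M$) by parallel transport along the geodesics $c$ of $\nabla$ with initial velocity $\xi$; write $V^i$ for the velocity field of such a geodesic, so $\nabla_V V^i = 0$ and $V^i\nabla_i\tau_I = 0$ along $c$. First I would use the explicit formula for the tractor connection on $\wt\calE_I$, namely
\[
\nabla_i\begin{pmatrix}\tau\\ \mu_j\end{pmatrix}=\begin{pmatrix}\nabla_i\tau-\mu_i\\ \nabla_i\mu_j+P_{ij}\tau\end{pmatrix},
\]
to expand $0 = V^i\nabla_i\tau_I$ into its two components: the top slot gives $V^i(\nabla_i\tau-\mu_i)=0$, i.e. $\mu_i V^i = V^i\nabla_i\tau$, and the bottom slot gives $V^i\nabla_i\mu_j + P_{ij}\tau V^i = 0$.

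Next I would contract the bottom-slot equation with $V^j$ and use that $\tau$ is $\nabla$-parallel, so that $\nabla_i\tau = 0$ as a section of $\calE(1)$ in its own scale — more precisely, since $\nabla$ is the representative connection with $\nabla\tau=0$, we have $V^i\nabla_i\tau = 0$, hence $\mu_i V^i = 0$ along $c$. Differentiating $\mu_i V^i$ along $c$ and using $\nabla_V V^i = 0$ gives $V^i V^j\nabla_i\mu_j = V\!\cdot(\mu_i V^i) = 0$. Feeding this back into the contracted bottom-slot equation $V^iV^j\nabla_i\mu_j + \tau\, P_{ij}V^iV^j = 0$ yields $\tau\, P_{ij}V^iV^j = 0$, and since $\tau>0$ near $M$ we conclude $P_{ij}V^iV^j = 0$ along $c$, which is the assertion.

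I expect the main obstacle to be bookkeeping the distinction between the connection on the density bundle $\calE(1)$ induced by the flat connection relative to the scale $\tau$ (with respect to which $\nabla\tau=0$ by definition of the representative connection) and the tractor connection on $\wt\calE_I$; one must be careful that the $\tau$ appearing in the bottom slot of the $D$-operator picture is parallel so that the cross terms involving $\nabla\tau$ drop out. A secondary point is to verify that the parallel-transport construction indeed produces $\mu_i = \nabla_i\tau$ on all of $M$ — not merely that the top slot recovers $\tau$ — which is exactly the content of $\tau_I\in\mathcal S$ together with invariance of the parallel transport; but this is part of the definition of a normalized scale and of $\mathcal S$, so I would simply invoke it. Everything else is a one-line contraction, so no genuinely hard step remains.
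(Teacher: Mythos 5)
Your argument is correct and is essentially identical to the paper's proof: both expand the parallel-transport equation $V^i\nabla_i\tau_I=0$ in the scale $\tau$, deduce $V^i\mu_i=0$ from the top slot, differentiate along the geodesic to get $V^iV^j\nabla_i\mu_j=0$, and then read off $P_{ij}V^iV^j=0$ from the contracted bottom slot. The only cosmetic difference is that your worry about verifying $\mu_i=\nabla_i\tau$ on $M$ is unnecessary; all that is used is that the top slot of the parallel tractor equals $\tau$, which is built into the definition of a normalized scale.
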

\begin{proof}
Let $\tau_I=(\tau, \mu_i)$ be the extension of the 1-jet of $\tau$ by the parallel transport along $c$. By the normalization condition on  $\tau$ it holds that
\begin{equation}\label{parallel}
0=V^i\nabla_i
\begin{pmatrix}
\tau \\
\mu_j
\end{pmatrix}\overset{\tau}{=}
\begin{pmatrix}
-V^i\mu_i \\
V^i\nabla_i\mu_j+V^iP_{ij}\tau
\end{pmatrix}.
\end{equation}
Since $V^j\nabla_jV^i=0$, the first slot of \eqref{parallel} implies
$$
0=V^j\nabla_j(V^i\mu_i)=V^jV^i\nabla_j\mu_i.
$$
Then it follows from the second slot of \eqref{parallel} that
$$
0=V^iV^j\nabla_i\mu_j+V^iV^jP_{ij}\tau=V^iV^jP_{ij}\tau.
$$
Thus we have $P_{ij}V^iV^j=0$.
\end{proof}
We write $V=V^\infty \xi +V^\a e_\a$ with an adapted frame $\{\xi, e_\a\}$. Then since $\nabla_V V=0$,  $P_{ij}V^iV^j=0$ and $V|_M=\xi$, we have 
\begin{equation}\label{P-infty-infty}
\begin{cases}
V^\infty \xi \,V^\infty+V^\g e_\g V^\infty-r(V^\infty)^2-h_{\g\mu}V^\g V^\mu=0, \\
V^\infty \xi \,V^\a+V^\g\nabla^\xi_\g V^\a+S_\g{}^\a V^\g V^\infty+C^\a(V^\infty)^2=0, \\
(V^\infty)^2P_{\infty\infty}+2P_{\g\infty}V^\g V^\infty+P_{\g\mu}V^\g V^\mu=0, \\
V^\infty|_M=1, \\
V^\a|_M=0. 
\end{cases}
\end{equation}
These equations enable us to compute the expansion of $P_{\infty\infty}$ together with the expansions of $V^\infty$ and $V^\a$.

Now we have completed the derivation of all equations that we use in the calculation of expansions of affine invariants.  We will need some commutation relations in the computations. In the following lemma, we present the commutation relation for a tensor $t_\b{}^\g$, but the equation can be extended in an obvious way for general type of tensors.     
\begin{lem}\label{comm-rel}
Let $\{\xi, e_\a\}$ be an adapted frame. For a tensor field $t_\b{}^\g\in\Gamma({\rm Ker}\,d\r\otimes({\rm Ker}\,d\r)^\ast)$, one has
\begin{align*}
\xi\,\nabla^\xi_\a t_\b{}^\g&=\nabla^\xi_\a(\xi\, t_\b{}^\g)-S_\a{}^\mu\nabla^\xi_\mu t_\b{}^\g
+\bigl(C^\g h_{\a\mu}-\d_\a{}^\g P_{\mu\infty}\bigr)t_\b{}^\mu \\
&\quad -\bigl(C^\mu h_{\a\b} -\d_\a{}^\mu P_{\b\infty}\bigr)t_\mu{}^\g.
\end{align*}
\end{lem}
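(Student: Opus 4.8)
The plan is to compute directly in the adapted frame $\{e_\infty=\xi, e_\a\}$, reducing the identity to the interplay between the Lie bracket $[\xi, e_\a]$ and the structure equations. Writing $\nabla^\xi_\a t_\b{}^\g = e_\a(t_\b{}^\g) - \omega^\xi{}_\b{}^\mu(e_\a)\,t_\mu{}^\g + \omega^\xi{}_\mu{}^\g(e_\a)\,t_\b{}^\mu$, applying $\xi=e_\infty$, and subtracting the analogous expansion of $\nabla^\xi_\a(\xi t_\b{}^\g)$, all the terms in which $\xi$ hits $e_\a(t)$ or hits an $\omega^\xi(e_\a)\cdot\xi(t)$ pair cancel, leaving
\[
\xi\,\nabla^\xi_\a t_\b{}^\g - \nabla^\xi_\a(\xi\, t_\b{}^\g) = [\xi, e_\a](t_\b{}^\g) - \xi\bigl(\omega^\xi{}_\b{}^\mu(e_\a)\bigr)t_\mu{}^\g + \xi\bigl(\omega^\xi{}_\mu{}^\g(e_\a)\bigr)t_\b{}^\mu .
\]
So two ingredients remain: the bracket $[\xi, e_\a]$ and the $\xi$-derivatives of the connection coefficients.

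For the bracket, torsion-freeness of $\nabla$ together with $\nabla_\xi e_\a=0$ gives $[\xi, e_\a]=-\nabla_{e_\a}\xi$, and reading off \eqref{conn-adapted} (noting $\omega_\infty{}^\infty(e_\a)=0$ since $e_\a\in{\rm Ker}\,d\r$) yields $[\xi, e_\a]=-S_\a{}^\mu e_\mu$. For the connection coefficients, I would apply the Cartan formula to $d\omega^\xi{}_\b{}^\mu$ on the pair $(\xi, e_\a)$: using the adapted-frame property $\omega^\xi{}_\b{}^\mu(\xi)=0$ and the bracket just found, this gives $\xi(\omega^\xi{}_\b{}^\mu(e_\a))=d\omega^\xi{}_\b{}^\mu(\xi, e_\a)-S_\a{}^\nu\omega^\xi{}_\b{}^\mu(e_\nu)$. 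To evaluate $d\omega^\xi{}_\b{}^\mu(\xi, e_\a)$, substitute the ambient structure equation $\Omega_\b{}^\mu = d\omega_\b{}^\mu - \omega_\b{}^\nu\wedge\omega_\nu{}^\mu - \omega_\b{}^\infty\wedge\omega_\infty{}^\mu$; since $\omega_\a{}^\b=\omega^\xi{}_\a{}^\b$ this rearranges to $d\omega^\xi{}_\b{}^\mu = \Omega^\xi{}_\b{}^\mu + \omega^\xi{}_\b{}^\nu\wedge\omega^\xi{}_\nu{}^\mu$ with $\Omega^\xi{}_\b{}^\mu = \Omega_\b{}^\mu + \omega_\b{}^\infty\wedge\omega_\infty{}^\mu$. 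The $\omega^\xi\wedge\omega^\xi$ term vanishes on $(\xi, e_\a)$ by $\omega^\xi(\xi)=0$; local flatness $C_{ij}{}^k{}_l=0$, i.e. $R_{ij}{}^k{}_l=2\d_{[i}{}^k P_{j]l}$, gives $\Omega_\b{}^\mu(\xi, e_\a)=R_{\infty\a}{}^\mu{}_\b=-\d_\a{}^\mu P_{\b\infty}$; and $\omega_\b{}^\infty=-h_{\b\g}\th^\g$, $\omega_\infty{}^\mu=S_\g{}^\mu\th^\g+C^\mu d\r$ give $(\omega_\b{}^\infty\wedge\omega_\infty{}^\mu)(\xi, e_\a)=C^\mu h_{\a\b}$. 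Altogether
\[
\xi\bigl(\omega^\xi{}_\b{}^\mu(e_\a)\bigr)=-\d_\a{}^\mu P_{\b\infty}+C^\mu h_{\a\b}-S_\a{}^\nu\,\omega^\xi{}_\b{}^\mu(e_\nu),
\]
and likewise for $\omega^\xi{}_\mu{}^\g(e_\a)$ (replace the lower index $\b$ by $\mu$ and the upper index $\mu$ by $\g$).

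Finally I would substitute these two formulas and $[\xi, e_\a]=-S_\a{}^\mu e_\mu$ into the first display. The three $S$-terms --- one from the bracket and one each from $\xi(\omega^\xi{}_\b{}^\mu(e_\a))$ and $\xi(\omega^\xi{}_\mu{}^\g(e_\a))$ --- recombine exactly into $-S_\a{}^\mu\bigl(e_\mu(t_\b{}^\g)-\omega^\xi{}_\b{}^\nu(e_\mu)t_\nu{}^\g+\omega^\xi{}_\nu{}^\g(e_\mu)t_\b{}^\nu\bigr)=-S_\a{}^\mu\nabla^\xi_\mu t_\b{}^\g$, and the purely algebraic remainder is precisely $(C^\g h_{\a\mu}-\d_\a{}^\g P_{\mu\infty})t_\b{}^\mu-(C^\mu h_{\a\b}-\d_\a{}^\mu P_{\b\infty})t_\mu{}^\g$, which is the assertion. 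For a tensor carrying arbitrary indices the same steps apply verbatim, producing one such $C$--$h$--$P$ correction term per index. The only real difficulty is bookkeeping: keeping track of signs in the two structure equations, distinguishing the $\infty$ component from Greek ones inside $\d_i{}^k$, and not dropping the bracket $[\xi, e_\a]$, which is nonzero because the adapted frame is not holonomic.
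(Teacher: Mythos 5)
Your proof is correct: the bracket $[\xi,e_\a]=-\nabla_{e_\a}\xi=-S_\a{}^\mu e_\mu$, the evaluation $\Omega_\b{}^\mu(\xi,e_\a)=R_{\infty\a}{}^\mu{}_\b=-\d_\a{}^\mu P_{\b\infty}$ from $R_{ij}{}^k{}_l=2\d_{[i}{}^kP_{j]l}$, and the cross term $(\omega_\b{}^\infty\wedge\omega_\infty{}^\mu)(\xi,e_\a)=C^\mu h_{\a\b}$ all check out against \eqref{conn-adapted}, and the three $S$-terms do recombine into $-S_\a{}^\mu\nabla^\xi_\mu t_\b{}^\g$ exactly as you say. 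The route is genuinely different in execution from the paper's, though it rests on the same two inputs (the adapted connection forms and local flatness). The paper extends $t_\b{}^\g$ by zero to an ambient tensor $t_j{}^k$ and computes $\nabla_\infty\nabla_\a t_\b{}^\g$ twice --- once directly, where the only surviving $\infty$-contribution is $\omega_\infty{}^\g(\xi)\nabla_\a t_\b{}^\infty=-C^\g h_{\a\mu}t_\b{}^\mu$, and once by the Ricci identity --- so the extrinsic correction terms fall out of the $\infty$-slots of the ambient connection with no need to track the Lie bracket or differentiate connection coefficients. Your version keeps everything on the hypersurface level: the nonholonomicity of the adapted frame enters through $[\xi,e_\a]$, and the curvature input enters through Cartan's formula for $d\omega^\xi{}_\b{}^\mu$. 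That makes your argument longer and more bookkeeping-heavy, but it exposes where each term in the commutator comes from (bracket versus $\xi$-derivative of the connection), whereas the paper's extension-by-zero device is slicker but hides this. Both generalize verbatim to tensors with more indices, one correction term per index, as you note.
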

\begin{proof}
We extend $t_\b{}^\g$ to $t_j{}^k$ by setting $t_\infty{}^\g=t_\b{}^\infty=t_{\infty}{}^\infty=0$. By \eqref{conn-adapted}, we have
$$
\nabla_\infty\nabla_\a t_\b{}^\g=\xi\,\nabla_\a t_\b{}^\g+C^\g\nabla_\a t_\b{}^\infty 
=\xi\,\nabla^\xi_\a t_\b{}^\g-C^\g h_{\a\mu}t_\b{}^\mu.
$$
Using $R_{ij}{}^k{}_l=2\d_{[i}{}^kP_{j]l}$, we also have
\begin{align*}
\nabla_\infty\nabla_\a t_\b{}^\g&=\nabla_\a\nabla_\infty t_\b{}^\g-R_{\infty\a}{}^\mu{}_\b t_{\mu}{}^\g
+R_{\infty\a}{}^\g{}_\mu t_\b{}^\mu \\
&=\nabla^\xi_\a(\xi\,t_\b{}^\g)-S_\a{}^\mu\nabla^\xi_\mu t_\b{}^\g-h_{\a\b}C^\mu t_\mu{}^\g
+P_{\b\infty}t_\a{}^\g-\d_\a{}^\g P_{\mu\infty}t_\b{}^\mu.
\end{align*}
Comparing the above two expressions for $\nabla_\infty\nabla_\a t_\b{}^\g$, we obtain the desired equation.
\end{proof}
\begin{prop}\label{coefficients}
Let $\tau\in\calE(1)$ be a normalized projective scale and $\br\in\calE(2)$ a Fefferman defining density. Then in an adapted frame $\{\xi, e_\a\}$, the boundary values of 
\begin{align*}
\xi^{k+1}S_\a{}^\b, \quad &\xi^{k+1}h_{\a\b}, \quad \xi^{k+1}P_{\a\b}, \quad \xi^{k+1}V^{\infty}, \quad \xi^{k+1}V^\a, \\
\xi^k r, \quad &\ \xi^k P_{\a\infty}, \quad \ \,\xi^kP_{\infty\infty}, \quad \xi^k C^\a
\end{align*}
are determined by \eqref{xi-S}, \eqref{xi-h}, \eqref{C}, \eqref{P-inf}, \eqref{Pab-Ric}, \eqref{xi-P}, \eqref{r}, and \eqref{P-infty-infty} for $k\le n/2-1$ when $n$ is even and for all $k$ when $n$ is odd. Moreover they are expressed in terms of $h_{\a\b}$, $S_\a{}^\b$, ${\rm Ric}^h_{\a\b}$, $A_{\a\b\g}$ and their covariant derivatives with respect to $\nabla^h$.
\end{prop}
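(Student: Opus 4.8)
The plan is to prove this by induction on $k$, treating $h_{\a\b}$, $S_\a{}^\b$, $P_{\a\b}$, $P_{\a\infty}$, $P_{\infty\infty}$, $C^\a$, $r$, $V^\infty$, $V^\a$ on the level sets of $\r$ as one coupled system, and showing that \eqref{xi-S}, \eqref{xi-h}, \eqref{C}, \eqref{P-inf}, \eqref{Pab-Ric}, \eqref{xi-P}, \eqref{r} and \eqref{P-infty-infty}, together with the commutation relation of Lemma \ref{comm-rel}, propagate the transversal jets at $M$ while keeping them expressible through the affine boundary data. For the base case $k=0$: by Lemma \ref{xi-affine-normal}, $\xi|_M$ is the affine normal, so $h_{\a\b}|_M$ and $S_\a{}^\b|_M$ are the affine metric and shape operator of $M$ and $A_{\a\b}{}^\g|_M=\nabla^h_\a-\nabla^\xi_\a$ is the Fubini--Pick form; hence on $M$ one has $\nabla^\xi=\nabla^h-A$, and $R^\xi|_M$ is the usual expression in $R^h$, $A$ and $\nabla^h A$, so that $\Ric^\xi_{\a\b}|_M$ is a universal contraction of $\Ric^h_{\a\b}$, $A_{\a\b\g}$ and $\nabla^h A$. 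Then \eqref{Pab-Ric} and \eqref{P-inf} give $P_{\a\b}|_M$ and $P_{\a\infty}|_M$ (the factor $n-1$ is nonzero, as $n\ge2$); \eqref{r} at $\r=0$ gives $r|_M$ with the nonzero coefficient $n$; \eqref{C} gives $C^\a|_M$; and the third equation of \eqref{P-infty-infty}, where $V^\infty|_M=1$ and $V^\a|_M=0$, gives $P_{\infty\infty}|_M=0$. All of these boundary values are affine invariants.

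For the inductive step, assume the needed transversal jets are already known and affine invariant. I would apply $\xi^k$ to \eqref{xi-h}, \eqref{xi-S}, \eqref{xi-P} and to the first two equations of \eqref{P-infty-infty}, using Lemma \ref{comm-rel} iteratively to rewrite each $\xi^k\nabla^\xi_\a(\,\cdot\,)$ as $\nabla^\xi_\a(\xi^k\,\cdot\,)$ plus terms of strictly lower transversal order; this produces $\xi^{k+1}h$, $\xi^{k+1}S$, $\xi^{k+1}P_{\a\b}$, $\xi^{k+1}V^\infty$, $\xi^{k+1}V^\a$ from $\xi^{\le k}$-data, provided $\xi^kr$, $\xi^kP_{\a\infty}$, $\xi^kC^\a$, $\xi^kP_{\infty\infty}$ are available. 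The latter I would obtain, in this order, from \eqref{P-inf} ($\xi^kP_{\a\infty}$ out of $\xi^kS$), \eqref{r} ($\xi^kr$; see below), \eqref{C} ($\xi^kC^\a$, using $\xi^k\nabla^\xi_\a r=\nabla^\xi_\a\xi^kr+\cdots$), and the third equation of \eqref{P-infty-infty}, whose leading coefficient $(V^\infty|_M)^2=1$ gives $\xi^kP_{\infty\infty}$. There is no circularity, since in \eqref{r} the quantity $P_{\infty\infty}$ enters only through the $\r^2$- and $\r^3$-order terms, so $\xi^k$ of \eqref{r} involves only $\xi^jP_{\infty\infty}$ with $j<k$. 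Substituting \eqref{Pab-Ric} for every undifferentiated $P_{\a\b}$ and replacing $\nabla^\xi$, $R^\xi$ by their expressions in $\nabla^h$, $R^h$, $A$ throughout, one finds that every quantity produced is a linear combination of complete contractions of $h_{\a\b}$, $S_\a{}^\b$, $\Ric^h_{\a\b}$, $A_{\a\b\g}$ and their $\nabla^h$-derivatives.

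The main obstacle is the solvability of \eqref{r} for $\xi^kr|_M$. When $\xi^k$ is applied to \eqref{r} and $\r=0$ is imposed, the coefficient of $\xi^kr|_M$ is not $n+2$: the term $\xi\log(1-2r\r-\cdots)$ contributes an extra $-2(k+1)$ via $\xi^{k+1}(-2r\r)$, while the factor $\log\det(\d_\a{}^\b+2\r h^{\b\g}P_{\g\a})$ and the higher-order pieces contribute only strictly lower transversal jets of $r$, because by \eqref{xi-h} and \eqref{xi-P} neither $\xi h$ nor $\xi P_{\a\b}$ involves $\xi r$ (only $r$ and its tangential derivatives). A careful count then shows the coefficient equals $n-2k$, so \eqref{r} determines $\xi^kr|_M$ exactly when $n\ne2k$: for all $k$ when $n$ is odd, and for $k\le n/2-1$ when $n$ is even. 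At the critical order $k=n/2$ this coefficient vanishes and the term $-(n/2+1)\underline{\calO}\,\r^{n/2}$ of \eqref{r} survives the evaluation at $\r=0$, in accordance with \eqref{approxMA} and with the indeterminacy of the Fefferman density modulo $O(\br^{n/2+2})$. Checking this coefficient---equivalently, that the recursion for the Taylor coefficients of $v$ coming from \eqref{eq-v} inherits the invertibility of the Monge--Amp\`ere equation up to order $n/2$---is the one genuinely non-formal point; the remainder is a finite, if lengthy, iteration.
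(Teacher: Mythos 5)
Your proposal is correct and follows essentially the same route as the paper: induction on $k$, differentiating the listed equations in the $\xi$-direction, using Lemma \ref{comm-rel} to commute $\xi$ past $\nabla^\xi_\a$, and isolating the one non-formal point, namely that the coefficient of $\xi^k r|_M$ obtained from \eqref{r} is $n-2k$, which fails to be invertible exactly at $k=n/2$ where the obstruction $\underline{\calO}$ appears. Your elaboration of the ordering within each inductive step and of why $P_{\infty\infty}$ causes no circularity in \eqref{r} is a faithful filling-in of details the paper leaves implicit.
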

The proof is given by the induction on $k$; we differentiate the equations in the $\xi$-direction repeatedly and use Lemma \ref{comm-rel} to obtain the Taylor coefficients as tensors on the boundary. If we differentiate \eqref{r} $k$ times, the coefficient of $\xi^k r$ becomes $n-2k$, so when $n$ is even, $\xi^k r|_M$ is determined up to $k=n/2-1$. When $k=n/2$, we obtain the expression of the obstruction $\underline{\calO}$ in terms of tensors on the boundary. 
\ \\

By \eqref{eq-v} and Proposition \ref{coefficients}, we obtain the following theorem:
\begin{thm}\label{L-exp}
Let $\tau\in\calE(1)$ be a normalized projective scale. Then the coefficients $c_j$ and $L$ in the volume expansion \eqref{volexp} are given by the integrals of linear combinations of complete contractions of $h_{\a\b}$, ${\rm Ric}^h_{\a\b}$, $S_\a{}^\b$, $A_{\a\b\g}$ and their covariant derivatives with respect to $\nabla^h$. 
\end{thm}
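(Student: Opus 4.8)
The plan is to combine the transport equation \eqref{eq-v} for $v$ with Proposition \ref{coefficients} and the change of variables already used in the proof of Theorem \ref{volume}. Recall that, with $\tau$ a normalized scale, $\r=\tau^{-2}\br$, and $F$ the flow of the affine normal field $\xi$, one has $F^\ast\tau^{-(n+2)}=v(x,\r)\,d\r\wedge vol_h$ with $v(x,0)=1$, while the approximate Monge--Amp\`ere equation \eqref{approxMA} gives $(-2\r)^{n+2}\det{}_\tau g_{ij}=1-\underline{\calO}\,\r^{n/2+1}+O(\r^{n/2+2})$ when $n$ is even (and $1+O(\r^\infty)$ when $n$ is odd). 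Writing $u=-(-2\r)^{1/2}$, so that $\r=-u^2/2$ and $d\r=-u\,du$, one gets
\[
vol_g=(\det{}_\tau g_{ij})^{1/2}\,\tau^{-(n+2)}=\Bigl(\,\sum_{j\ge 0}b_j(x)\,(-u)^{-n-1+2j}\Bigr)du\wedge vol_h ,
\]
where $w(x,u):=\bigl(1-\underline{\calO}\,\r^{n/2+1}+\cdots\bigr)^{1/2}\,v(x,\r)\big|_{\r=-u^2/2}=\sum_{j\ge 0}b_j(x)u^{2j}$. Integrating over $\{u<-\e\}$ and isolating the $(-u)^{-1}du$ term exactly as in Theorem \ref{volume}, one finds that $c_{-n+2j}$ is a nonzero constant multiple of $\int_M b_j\,vol_h$ for $0\le j<n/2$, and that $L=\int_M b_{n/2}\,vol_h$. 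Since the prefactor equals $1$ modulo $\r^{n/2+1}$, for $j\le n/2$ we simply have $b_j=\bigl(-\tfrac{1}{2}\bigr)^j\tfrac{1}{j!}\,(\xi^j v)|_M$. Hence it suffices to show that $(\xi^j v)|_M$ is an affine invariant for $0\le j\le n/2$ (for all $j$ when $n$ is odd).

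To this end I would run the recursion furnished by \eqref{eq-v}. Differentiating $\xi v=({\rm tr}S-r)v$ repeatedly in the $\xi$-direction and restricting to $M$, the Leibniz rule shows that $(\xi^{k+1}v)|_M$ is a universal polynomial in $(\xi^l v)|_M$ and $(\xi^l({\rm tr}S-r))|_M$ for $l\le k$; inserting $v|_M=1$ and iterating, $(\xi^{k+1}v)|_M$ becomes a polynomial in $\{(\xi^l({\rm tr}S-r))|_M:l\le k\}$ alone. Since ${\rm tr}S=S_\a{}^\a$ is a scalar, $(\xi^l{\rm tr}S)|_M$ is the total trace of the $(1,1)$-tensor $(\xi^l S_\a{}^\b)|_M$, so Proposition \ref{coefficients} expresses both $(\xi^l{\rm tr}S)|_M$ and $(\xi^l r)|_M$ as complete contractions of $h_{\a\b}$, ${\rm Ric}^h_{\a\b}$, $S_\a{}^\b$, $A_{\a\b\g}$ and their $\nabla^h$-derivatives, provided $l\le n/2-1$ when $n$ is even and for all $l$ when $n$ is odd. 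Taking $k\le n/2-1$ therefore gives that $(\xi^j v)|_M$ for $j\le n/2$ is such an invariant, and substituting into the formulas above for $b_j$, $c_{-n+2j}$ and $L$ completes the proof.

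The argument is essentially bookkeeping once Proposition \ref{coefficients} is available; the only delicate point --- which is precisely what that proposition takes care of --- is that the system \eqref{xi-S}, \eqref{xi-h}, \eqref{C}, \eqref{P-inf}, \eqref{Pab-Ric}, \eqref{xi-P}, \eqref{r}, \eqref{P-infty-infty} closes order by order, and that the factor $n-2k$ produced when \eqref{r} is differentiated $k$ times lets the recursion for $r$ (hence for $v$) run cleanly only up to $k=n/2-1$ when $n$ is even. This is harmless here: $L$ is the coefficient of $u^n$ in $w$, and through the transport equation it requires the $\xi$-derivatives of ${\rm tr}S-r$ only up to order $n/2-1$, exactly the range covered by Proposition \ref{coefficients}; at order $n/2$ one instead reads off the obstruction density $\underline{\calO}$, which is why the expansion terminates there.
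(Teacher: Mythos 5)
Your argument is correct and follows exactly the route the paper takes: the paper's own proof is the one-line observation that Theorem \ref{L-exp} follows from the transport equation \eqref{eq-v} together with Proposition \ref{coefficients}, and your write-up simply makes explicit the bookkeeping (Taylor coefficients of $v$ in $\r$, the harmless square-root prefactor from the approximate Monge--Amp\`ere equation, and the fact that $L$ only requires $\xi$-derivatives of ${\rm tr}S-r$ up to order $n/2-1$, which is precisely the range Proposition \ref{coefficients} covers). No gaps; the identification of the obstruction at order $n/2$ as the reason the recursion stops is also consistent with the remark following Proposition \ref{coefficients}.
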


\subsection{Explicit calculations}
We calculate the expansion of $v$ and present an explicit formula for $L$ when $n=2$. 
Setting $\r=0$ in \eqref{Pab-Ric}, we have 
\begin{equation}\label{Pab-M}
\begin{aligned}
(n-1)P_{\a\b}|_M&={\rm Ric}^\xi_{\a\b}+S_{\a\b}-({\rm tr}S)h_{\a\b} \\
&={\rm Ric}^h_{\a\b}-(\d A)_{\a\b}-A_{\a\mu\nu}A_\b{}^{\mu\nu}+S_{\a\b}-({\rm tr}S)h_{\a\b},
\end{aligned}
\end{equation}
where $(\d A)_{\a\b}:=\nabla^h_\g A_{\a\b}{}^\g$. Then setting $\r=0$ in \eqref{r} gives
\begin{equation}\label{r-M}
\begin{aligned}
r|_M&=\frac{1}{n}\bigl({\rm tr}S-h^{\a\b}P_{\a\b}|_M\bigr) \\
&=\frac{2}{n}{\rm tr}S-\frac{1}{n(n-1)}\bigl(\scal_h-|A|^2\bigr). 
\end{aligned}
\end{equation}
By \eqref{eq-v} we have
$$
\xi v|_M={\rm tr}S-r|_M=\frac{1}{n(n-1)}\bigl(\scal_h-|A|^2\bigr)+\frac{n-2}{n}{\rm tr}S.
$$
Therefore, when $n=2$, $L$ agrees with a nonzero multiple of 
\begin{equation}\label{L2}
4\pi\chi(M)-\int_M |A|^2vol_h
\end{equation}
by the Gauss--Bonnet theorem. 

We will give a formula for $L$ when $n=4$ in a suitably chosen scale in \S \ref{harm-scale}.

\section{Conformal Codazzi structure}
\subsection{Correspondence between projective and conformal tractor bundles}
Let $(N, [\nabla])$ be an $(n+1)$-dimensional locally flat projective manifold and let $M\subset N$ be a strictly convex hypersurface. Then we have two bundles over $M$: the restriction of the projective tractor bundle $\wt\calE^I$ and the conformal tractor bundle for the affine metric, which we denote by $\calE^I$. We will see that there exists a canonical isomorphism between these two bundles.   

First we recall the conformal tractor bundle and conformal tractor connection. For each choice of a conformal  scale $\kappa\in\calE[1]$, the conformal tractor bundle 
is expressed as 
$$
\calE^I\overset{\kappa}{\cong}
\begin{matrix}
\calE[1] \\
\oplus \\
\calE^\a[-1] \\
\oplus \\
\calE[-1]
\end{matrix}.
$$
If we change the scale as $\wh\kappa=e^{-\U}\kappa$, the expression transforms as
\begin{equation}\label{conf-tract-trans}
\begin{pmatrix}
\wh\sigma \\
\wh\mu^\a \\
\wh\varphi
\end{pmatrix}=
\begin{pmatrix}
\sigma \\
\mu^\a+\U^\a\sigma \\
\varphi-\U_\g\mu^\g-\frac{1}{2}\U_\g\U^\g\sigma
\end{pmatrix}.
\end{equation}
The {\it tractor metric} $h_{IJ}\in\calE_{(IJ)}$ is a conformally invariant symmetric form defined by
$$
h_{IJ}U^I{U^\prime}^J=\bh_{\a\b}\mu^\a{\mu^\prime}^\b+\sigma\varphi^\prime+\varphi\sigma^\prime,
$$
where $U^I={}^t(\sigma, \mu^\a, \varphi)$ and ${U^\prime}^I={}^t(\sigma^\prime, {\mu^\prime}^\a, \varphi^\prime)$.
When $n\ge 3$, the {\it conformal tractor connection} is defined by
\begin{equation}\label{conf-tract-conn}
\nabla_\a\begin{pmatrix}
\sigma \\
\mu^\b \\
\varphi
\end{pmatrix}=
\begin{pmatrix}
\nabla^h_\a\sigma-\mu_\a \\
\nabla^h_\a\mu^\b+\d_\a{}^\b\varphi+P^h_{\a}{}^\b\sigma \\
\nabla^h_\a\varphi-P^h_{\a\g}\mu^\g
\end{pmatrix},
\end{equation}
where 
$$
P^h_{\a\b}=\frac{1}{n-2}\Bigl(\Ric^h_{\a\b}-\frac{\scal_h}{2(n-1)}\bh_{\a\b}\Bigl)
$$
is the {\it (conformal) Schouten tensor} of the representative metric $h_{\a\b}=\kappa^{-2}\bh_{\a\b}$. The tractor connection is conformally invariant and preserves the tractor metric. The curvature of the  connection is given by
\begin{equation}\label{conf-tract-curv}
\Omega_{\a\b}{}^K{}_{L}=
\begin{pmatrix}
0 & 0 & 0 \\
2Y_{\a\b}{}^\g & W^h_{\a\b}{}^\g{}_\d & 0 \\
0 & -2Y_{\a\b\d} & 0 
\end{pmatrix},
\end{equation}
where 
$$
Y_{\a\b\g}=\nabla^h_{[\a}P^h_{\b]\g}
$$
is the {\it Cotton--York tensor} and 
$$
W^h_{\a\b\g\d}=R^h_{\a\b\g\d}-2\bigl(\bh_{\g[\a}P^h_{\b]\d}-\bh_{\d[\a}P^h_{\b]\g}\bigr)
$$
is the {\it conformal Weyl tensor}.
When $n=2$, we need an additional structure on $(M, [h])$ in order to define a tractor connection. 
For a fixed conformal scale $\kappa\in\calE[1]$, we set 
\begin{equation}\label{Mobius}
\mathbb{P}_{\a\b}=\mathbb{P}^\kappa_{\a\b}:={\rm tf}\bS_{\a\b}-\frac{1}{2}(\d A)_{\a\b}+\frac{1}{4}\scal_h \bh_{\a\b}\in\calE_{(\a\b)},
\end{equation}
where ${\rm tf}\bS_{\a\b}$ denotes the trace-free part of $\bS_{\a\b}$ with respect to $\bh_{\a\b}$.
Then, by Proposition \ref{affine-transform} and \ref{sym-A-S}, it holds that 
$$
\mathbb{P}_\a{}^\a=\frac{1}{2}\scal_h
$$
and 
$$
\wh{\mathbb{P}}_{\a\b}=\mathbb{P}_{\a\b}-\nabla^h_\a\U_\b+\U_\a\U_\b-\frac{1}{2}\U_\g\U^\g \bh_{\a\b}
$$
for a rescaling $\wh\kappa=e^{-\U}\kappa$. These are the equations described in \cite[Definition 2.2]{R} so $\mathbb{P}_{\a\b}$ defines a {\it M\"obius structure} on $(M, [h])$ in the sense of \cite{R}. (See also \cite{C} for M\"obius structures.) We denote the set $\{ {\mathbb{P}}^\kappa_{\a\b}\ |\ 0<\kappa\in\calE[1]\}$ by $[\mathbb{P}]$. The tensor $\mathbb{P}_{\a\b}$ satisfies $R^h_{\a\b\g\d}=2\bh_{\g[\a}\mathbb{P}_{\b]\d}-2\bh_{\d[\a}\mathbb{P}_{\b]\g}$, so it is a 2-dimensional analogue of the conformal Schouten tensor. We set $P^h_{\a\b}:=\mathbb{P}_{\a\b}$ for a M\"obius surface and define the {\it M\"obius tractor connection} by \eqref{conf-tract-conn}. The curvature is given by \eqref{conf-tract-curv} with $W^h_{\a\b\g\d}=0$.

Let us denote the inclusion map by $\iota:M\rightarrow N$. The isomorphism between $\wt\calE^I|_M$ and $\calE^I$ is given by the following proposition:
\begin{prop}
Let $\tau\in\calE(1)$ be a projective scale and set $J^\prime=\frac{1}{2(n-1)}(\scal_h-|A|^2)\in\calE[-2]$ and $\bxi^i=\tau^{-2}\xi^i\in\calE^i(-2)|_M$, where $\xi^i$ is the affine normal field. Then the bundle isomorphism
$$
\Phi: \calE^I\overset{\tau|_M}{\cong}
\begin{matrix}
\calE[1] \\
\oplus \\
\calE^\a[-1] \\
\oplus \\
\calE[-1]
\end{matrix}
\ni 
\begin{pmatrix}
\sigma \\
\mu^\a \\
\varphi
\end{pmatrix}
\longmapsto
\begin{pmatrix}
\iota_\ast\mu^\a+\sigma\bxi^i \\
\varphi-\frac{1}{n}({\rm tr}\bS-J^\prime)\sigma
\end{pmatrix}
\in 
\begin{matrix}
\calE^i(-1)|_M \\
\oplus \\
\calE(-1)|_M
\end{matrix}
\overset{\tau}{\cong}\wt\calE^I|_M
$$
is independent of $\tau$. Moreover, if $\br\in\calE(2)$ is a Fefferman defining density of $M$, it holds that
$\Phi^\ast \wt g_{IJ}=h_{IJ}$, where $\wt g_{IJ}=D_ID_J\br$ is the ambient metric.
\end{prop}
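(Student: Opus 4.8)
The plan is to establish the two assertions in turn: first that the displayed formula defines one and the same bundle map $\Phi$ for every projective scale $\tau$, and then, when $\br$ is a Fefferman defining density, that this $\Phi$ intertwines the conformal tractor metric $h_{IJ}$ with the ambient metric $\wt g_{IJ}=D_ID_J\br$. That the formula defines a bundle isomorphism for each fixed scale is clear, since it is block-triangular: $\mu^\a$ is sent isomorphically onto the tangential part of $\calE^i(-1)|_M$, $\sigma$ supplies the transverse direction via $\sigma\bxi^i$, and the bottom component is a shift of $\varphi$. Throughout I work in a fixed scale $\tau$ and, along $M$, in an adapted frame $\{e_\infty=\xi,e_\a\}$ as in Lemma \ref{xi-affine-normal}, writing $\bxi^i=\tau^{-2}\xi^i$ and $\boldsymbol r=\tau^{-2}r$ with $r$ the transverse curvature; quantities carrying projective or conformal weight are freely identified with their representatives in this scale.

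For scale-invariance, set $\wh\tau=e^{-\U}\tau$, $\upsilon_\a=\nabla^h_\a\U|_M$, $\upsilon^\a=\bh^{\a\b}\upsilon_\b$. The top component is the easy case. By the conformal transformation law \eqref{conf-tract-trans} the middle slot of a conformal tractor transforms by $\mu^\a\mapsto\mu^\a+\upsilon^\a\sigma$, while Proposition \ref{affine-transform} gives $\wh\xi=e^{-2\U}(\xi-{\rm grad}_h\U)$, hence $\wh\bxi^i=\bxi^i-\iota_\ast\upsilon^\a$; therefore $\iota_\ast\wh\mu^\a+\wh\sigma\,\wh\bxi^i=\iota_\ast\mu^\a+\sigma\bxi^i$ is unchanged, in accordance with \eqref{tract-trans}, which leaves the top slot of a projective tractor fixed. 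For the bottom component, one must check that $\varphi-\tfrac1n({\rm tr}\bS-J^\prime)\sigma$, recomputed in the scale $\wh\tau$, differs from the original by exactly the shift $-\U_k\nu^k$ prescribed by \eqref{tract-trans}, where $\nu^i=\iota_\ast\mu^\a+\sigma\bxi^i$. This reduces to the transformation of $\tfrac1n({\rm tr}\bS-J^\prime)$. Taking traces in the formula for $\wh S$ in Proposition \ref{affine-transform}, and using that $\nabla^h-\nabla^\xi$ is the totally symmetric trace-free Fubini--Pick form (Proposition \ref{sym-A-S}) to convert $\nabla^\xi$-divergences into $\nabla^h$-divergences, one gets ${\rm tr}\wh\bS={\rm tr}\bS+n\,\bxi^i\pa_i\U-(n-1)\upsilon_\g\upsilon^\g-\nabla^h_\a\upsilon^\a$; combining this with the conformal invariance of $|A|^2$ and the standard transformation $\scal_{\wh h}=e^{-2\U}\bigl(\scal_h-2(n-1)\Delta^h\U-(n-1)(n-2)|d\U|_h^2\bigr)$, the second-order terms cancel and one is left with
\[
\tfrac1n({\rm tr}\wh\bS-\wh J^\prime)=\tfrac1n({\rm tr}\bS-J^\prime)+\bxi^i\pa_i\U-\tfrac12\upsilon_\g\upsilon^\g.
\]
Feeding this together with $\wh\varphi=\varphi-\upsilon_\g\mu^\g-\tfrac12\upsilon_\g\upsilon^\g\sigma$ into the bottom component, and noting that in the adapted frame $\U_k\nu^k=\upsilon_\g\mu^\g+(\bxi^i\pa_i\U)\sigma$, one sees the change of the bottom component is precisely $-\U_k\nu^k$, which is the required compatibility. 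Only the bundles and the transformation laws \eqref{conf-tract-trans}, \eqref{tract-trans} enter here, so the argument is uniform in $n\ge2$ and uses nothing about the M\"obius structure when $n=2$.

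For the metric identity, let $\br$ be Fefferman. The key point is the relation \eqref{r-M}, which gives $\tfrac1n({\rm tr}\bS-J^\prime)=\tfrac12\boldsymbol r|_M$; so in the scale $\tau$ the tractor $\Phi(U)$, for $U={}^t(\sigma,\mu^\a,\varphi)$, has top slot $\nu^i=\iota_\ast\mu^\a+\sigma\bxi^i$ with adapted-frame components $(\nu^\infty,\nu^\a)=(\sigma,\mu^\a)$, and bottom slot $\lam=\varphi-\tfrac12\boldsymbol r|_M\,\sigma$. Since $\br$ is Fefferman, $\xi|_M$ is the affine normal (Lemma \ref{xi-affine-normal}), so $\wt g_{IJ}|_M$ is given by \eqref{g-M}; accordingly, for two projective tractors ${}^t(\nu^i,\lam)$, ${}^t(\nu^{\prime i},\lam^\prime)$ one finds at $M$
\[
\wt g_{IJ}\,\nu^I\nu^{\prime J}=\nu^\infty\lam^\prime+\nu^{\prime\infty}\lam+\bh_{\a\b}\nu^\a\nu^{\prime\b}+\boldsymbol r|_M\,\nu^\infty\nu^{\prime\infty}.
\]
Substituting the components of $\Phi(U)$ and $\Phi(U^\prime)$, the two $\tfrac12\boldsymbol r|_M$-shifts in $\lam,\lam^\prime$ cancel the term $\boldsymbol r|_M\,\nu^\infty\nu^{\prime\infty}$ exactly, leaving $\bh_{\a\b}\mu^\a\mu^{\prime\b}+\sigma\varphi^\prime+\varphi\sigma^\prime=h_{IJ}U^IU^{\prime J}$. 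Hence $\Phi^\ast\wt g_{IJ}=h_{IJ}$.

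There is no deep obstacle; the only point requiring care is the bottom-slot transformation in the first part, where one must keep careful track of the projective weights and, in particular, pass correctly between $\nabla^\xi$ and $\nabla^h$ when tracing the transformation of $S_\a{}^\b$ — it is the trace-freeness of $A_{\a\b\g}$ that makes the second-order terms match and cancel against the conformal variation of $\scal_h$. With the transformation of $\tfrac1n({\rm tr}\bS-J^\prime)$ in hand, both assertions reduce to direct substitutions into \eqref{conf-tract-trans}, \eqref{tract-trans} and into \eqref{g-M}, \eqref{r-M}.
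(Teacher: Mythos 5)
Your proposal is correct and follows essentially the same route as the paper: check the two slots against the transformation law \eqref{tract-trans} using Proposition \ref{affine-transform} and \eqref{conf-tract-trans}, then verify the metric identity by substituting into \eqref{g-M} and cancelling via \eqref{r-M}. The only difference is that you write out explicitly the computation of ${\rm tr}\wh{\bS}-\wh{J}^\prime={\rm tr}\bS-J^\prime+n\,\bxi^i\U_i-\tfrac{n}{2}\U_\g\U^\g$ (correctly, including the role of the trace-freeness of $A_{\a\b\g}$), which the paper uses without derivation.
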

\begin{proof}
Let $\wh\tau=e^{-\U}\tau$ be another projective scale. By Proposition \ref{affine-transform} and \eqref{conf-tract-trans}, we have
\begin{align*}
\iota_\ast\wh\mu^\a+\wh\sigma\wh\bxi^i &=\iota_\ast\mu^\a+\sigma\iota_\ast\U^\a+\sigma(\bxi^i-\iota_\ast\U^\a) \\
&=\iota_\ast \mu^\a+\sigma\bxi^i, \\
\wh\varphi-\frac{1}{n}({\rm tr}\wh\bS-\wh J^\prime)\wh\sigma
&=\varphi-\U_\g\mu^\g-\frac{1}{2}\U_\g\U^\g\sigma-\frac{1}{n}\Bigl({\rm tr}\bS-J^\prime+n\,\bxi^i\U_i-\frac{n}{2}\U_\g\U^\g\Bigr)\sigma \\
&=\varphi-\frac{1}{n}({\rm tr}\bS-J^\prime)\sigma-\U_i(\iota_\ast\mu+\sigma\bxi^i).
\end{align*}
This agrees with the transformation formula for the projective tractor, so $\Phi$ is invariant under rescaling.
Let $\br$ be a Fefferman defining density of $M$. Using \eqref{g-M} and \eqref{r-M}, we have
\begin{align*}
(\Phi^\ast\wt g_{IJ})U^I{U^\prime}^J&=\bh_{\a\b}\mu^\a{\mu^\prime}^\b+\boldsymbol{r}|_M\sigma\sigma^\prime+\Bigl(\varphi-\frac{1}{n}({\rm tr}\bS-J^\prime)\sigma\Bigr)\sigma^\prime \\
&\quad +\sigma\Bigl(\varphi^\prime-\frac{1}{n}({\rm tr}\bS-J^\prime)\sigma^\prime\Bigr)  \\
&=\bh_{\a\b}\mu^\a{\mu^\prime}^\b+\varphi\sigma^\prime+\sigma\varphi^\prime \\
&=h_{IJ}U^I{U^\prime}^J
\end{align*}
for  $U^I={}^t(\sigma, \mu^\a, \varphi)$ and ${U^\prime}^I={}^t(\sigma^\prime, {\mu^\prime}^\a, \varphi^\prime)$. Thus $\Phi^\ast\wt g_{IJ}=h_{IJ}$ holds.
\end{proof}
\begin{rem}
\rm The correspondence between the projective and conformal tractor bundles is also considered by \v Cap and Gover \cite{CG2} in the context of projective compactification. In our setting, when $\nabla\in[\nabla]$ is a representative connection corresponding to a projective scale $\tau$ and  $\br=\tau^2\r$ is a Fefferman defining density, the connection $\nabla^\prime=\nabla-(2\r)^{-1}d\r$ is projectively compact of order $2$ and the isomorphism described in \cite{CG2} agrees with 
the one given above.
\end{rem}
\subsection{Comparison of connections on the conformal tractor bundle}
We denote by $\nabla^{\rm proj}_\a$ the restriction of the projective tractor connection to $\wt\calE^I|_M$, and set $\bnabla_\a:=\Phi^{-1}\circ\nabla^{\rm proj}_\a\circ\Phi$. A calculation shows that the flat connection 
$\bnabla$ and the conformal (M\"obius) tractor connection $\nabla$ are related as 
\begin{equation}\label{proj-conf}
\bnabla_\a U^J=\nabla_\a U^J-\wt A_{\a K}{}^J U^K, \quad \wt A_{\a K}{}^J=
\begin{pmatrix}
0 & 0 & 0 \\
E_\a{}^\b & A_{\a\g}{}^\b & 0 \\
F_\a & G_{\a\g} & 0
\end{pmatrix},
\end{equation}
where 
\begin{equation}\label{EGF}
\begin{aligned}
E_{\a\b}&=-{\rm tf}\bS_{\a\b}-\frac{1}{n}J^\prime\bh_{\a\b}+P^h_{\a\b} \in\calE_{(\a\b)}, \\
G_{\a\b}&=P_{\a\b}+\frac{1}{n}({\rm tr}\bS-J^\prime)\bh_{\a\b}-P^h_{\a\b}\in\calE_{(\a\b)}, \\
F_{\a}&=\tau^{-2}P_{\a\infty}+\frac{1}{n}\nabla^h_\a({\rm tr}\bS-J^\prime)\in\calE_\a[-2]
\end{aligned}
\end{equation}
in any projective scale $\tau\in\calE(1)$. By \eqref{Pab-Ric}, it holds that $E_\g{}^\g+G_\g{}^\g=0$. 

The following proposition asserts that the flatness of the connection $\bnabla$ determines $E_{\a\b}$, $G_{\a\b}$, $F_\a$ and imposes some equations on $A_{\a\b\g}$:
\begin{prop}\label{GC-prop}
Let $\bnabla$ be a connection on $\calE^I$ of the form \eqref{proj-conf} with some tensors $E_{\a\b}\in\calE_{(\a\b)}$, $G_{\a\b}\in\calE_{(\a\b)}$, $F_\a\in\calE_\a[-2]$ and $A_{\a\b\g}\in\calE_{(\a\b\g)_0}[2]$ in a fixed conformal scale. We assume that $E_\g{}^\g+G_\g{}^\g=0$. When $n=2$, we also assume that ${\rm tf}(E_{\a\b}-G_{\a\b})=0$. Then $\bnabla$ is flat if and only if the following two conditions hold: 
\begin{itemize}
\item[(i)] The tensors $E_{\a\b}$, $G_{\a\b}$ and $F_\a$ satsify
\begin{align}
E_{\a\b}&=-\frac{1}{n}(\d A)_{\a\b}+P^A_{\a\b}, \label{Eab} \\
G_{\a\b}&=-\frac{1}{n}(\d A)_{\a\b}-P^A_{\a\b}, \label{Gab}\\
F_\a&=\frac{1}{n(n-1)}(\d^2 A)_\a+\frac{1}{n-1}A_{\a}{}^{\mu\nu}(P^h_{\mu\nu}-P^A_{\mu\nu}), \label{Fa}
\end{align}
where 
$$
P^A_{\a\b}=
\begin{cases}
\displaystyle\frac{1}{n-2}\Bigl(A_{\a\mu\nu}A_{\b}{}^{\mu\nu}-\frac{|A|^2}{2(n-1)}\bh_{\a\b}\Bigr) & (n\ge 3) \\
\quad \\
\displaystyle\frac{1}{4}|A|^2\bh_{\a\b} & (n=2).
\end{cases}
$$
\item[(ii)] The tensor $A_{\a\b\g}$ satisfies the Gauss equation
\begin{align}
2\,{\rm tf}(A_{\nu\g[\a}A_{\b]\mu}{}^\nu)+W^h_{\a\b\g\mu}&=0 & (n\ge4), \label{Gauss4} \\
\nabla^h_{[\a}P^A_{\b]\g}+\frac{1}{3}A_{\mu\g[\a}(\d A)_{\b]}{}^\mu-Y_{\a\b\g}&=0 & (n=3), \label{Gauss3} \\
\nabla^h_{\a}|A|^2+2A_\a{}^{\mu\nu}(\d A)_{\mu\nu}-8Y_{\a\mu}{}^\mu&=0 & (n=2), \label{Gauss2}
\end{align}
and the Codazzi equation
\begin{align}
\nabla^h_{[\a}A_{\b]\g\mu}-\frac{1}{n}\Bigl(\bh_{\mu[\a}(\d A)_{\b]\g}+\bh_{\g[\a}(\d A)_{\b]\mu}\Bigr)&=0
& (n\ge3), \label{Codazzi3} \\
\nabla^h_{[\a}(\d^2 A)_{\b]}+4P^h_{\mu[\a}(\d A)_{\b]}{}^\mu-2A^{\g\mu}{}_{[\a}\nabla^h_{\b]}P^h_{\g\mu}&=0  & (n=2). \label{Codazzi2}
\end{align}
\end{itemize}
\end{prop}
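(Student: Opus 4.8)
The plan is to compute the curvature of $\bnabla$ directly from the expression $\bnabla_\a=\nabla_\a-\wt A_\a$ in \eqref{proj-conf} and to read off when it vanishes. Denoting the conformal (or, for $n=2$, M\"obius) tractor curvature \eqref{conf-tract-curv} by $\Omega_{\a\b}{}^K{}_L$, one has
\[
\bar\Omega_{\a\b}{}^K{}_L=\Omega_{\a\b}{}^K{}_L-2\nabla_{[\a}\wt A_{\b]}{}^K{}_L+2\wt A_{[\a}{}^K{}_M\wt A_{\b]}{}^M{}_L,
\]
where the covariant derivative acts by the conformal tractor connection on the tractor indices and by $\nabla^h$ on the form index (the Levi-Civita Christoffel terms dropping out under the skew-symmetrization). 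Since $\bnabla$ is flat exactly when $\bar\Omega_{\a\b}{}^K{}_L=0$, I would expand the right-hand side in the splitting $\calE^I\cong\calE[1]\oplus\calE^\a[-1]\oplus\calE[-1]$. Because $\wt A$ is block-strictly-lower-triangular and $E_{\a\b}$, $G_{\a\b}$, $A_{\a\b\g}$ are symmetric, the components of $\bar\Omega$ landing in $\calE[1]$ and those with input $\calE[-1]$ vanish identically; what survives is a system of four tensor equations, coming from the blocks $\calE[1]\to\calE^\a[-1]$, $\calE^\a[-1]\to\calE^\a[-1]$, $\calE[1]\to\calE[-1]$ and $\calE^\a[-1]\to\calE[-1]$. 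So $\bnabla$ is flat if and only if these four equations hold, and the task is to show the system is equivalent to (i) and (ii). The method is to decompose each block equation into $O(n)$-irreducible pieces and match them, using the Bianchi identities for $\nabla^h$ and the Ricci identity to eliminate dependent components.

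The $\calE^\a[-1]\to\calE^\a[-1]$ block is the central one: it contains $W^h$, the first-order term $\nabla^h_{[\a}A_{\b]}{}^\g{}_\d$, and algebraic terms built from $A$ with itself and from $E_{\a\b}$, $G_{\a\b}$ with the metric and the Kronecker delta. Contracting its form index with an endomorphism index --- so that $W^h$ and the $\g\d$-skew terms drop, $\nabla^h A$ produces $(\d A)_{\a\b}=\nabla^h_\mu A_{\a\b}{}^\mu$, and the quadratic term produces $A_{\a\mu\nu}A_\b{}^{\mu\nu}$ --- yields one symmetric relation among $E_{\a\b}$, $G_{\a\b}$ and $A$; combined with the hypothesis $E_\g{}^\g+G_\g{}^\g=0$, and for $n=2$ with ${\rm tf}(E_{\a\b}-G_{\a\b})=0$, this forces $E_{\a\b}$ and $G_{\a\b}$ to be \eqref{Eab} and \eqref{Gab}, with $P^A_{\a\b}$ arising as the trace correction (the $1/(n-2)$, resp.\ $1/4$ for $n=2$, coming from inverting the relevant trace in dimension $n$, which is why $P^A$ is defined separately when $n=2$). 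Once \eqref{Eab}--\eqref{Gab} are substituted, the totally trace-free, $\g\d$-skew part of the same block becomes the identity between $W^h$ and the trace-free part of $A_{\nu\g[\a}A_{\b]\mu}{}^\nu$, i.e.\ the Gauss equation \eqref{Gauss4}, and its trace-free, $\g\d$-symmetric part is the Codazzi equation \eqref{Codazzi3}, whose differential operator is the second BGG operator on $\calE_{(\a\b\g)_0}[2]$.

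It then remains to treat the three blocks $\calE[1]\to\calE^\a[-1]$, $\calE[1]\to\calE[-1]$ and $\calE^\a[-1]\to\calE[-1]$ after substituting \eqref{Eab}--\eqref{Gab}. A suitable combination of their traces fixes $F_\a$ as in \eqref{Fa} --- here $(\d^2 A)_\a=\nabla^h_\mu(\d A)_\a{}^\mu$ enters, and the Codazzi equation together with the Ricci identity is needed to bring the second-derivative and $\nabla^h|A|^2$ contributions into this form --- so that the conditions (i) hold. For $n\ge4$ I expect the remaining trace-free components of these three blocks to follow automatically from \eqref{Gauss4} and \eqref{Codazzi3} via the Bianchi identities, so that (i) and (ii) exhaust the content of $\bar\Omega=0$; carrying out this redundancy check is where most of the routine labour lies. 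The main obstacle is the dimensional degeneration. When $n=3$ there are no nonzero algebraic Weyl tensors, so \eqref{Gauss4} is vacuous; the block $\calE^\a[-1]\to\calE[-1]$ then ceases to be redundant and supplies the Cotton-type Gauss equation \eqref{Gauss3}, while Codazzi \eqref{Codazzi3} still comes from the middle block. When $n=2$ the conformal tractor connection does not exist and is replaced by the M\"obius one built from $\mathbb{P}_{\a\b}$ in \eqref{Mobius}; here $W^h$ vanishes identically, the remaining constraints descend one further order --- into the block $\calE[1]\to\calE[-1]$ and its divergence --- and are expressed through the M\"obius curvature, yielding \eqref{Gauss2} and \eqref{Codazzi2}. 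Each of these low-dimensional cases has to be worked out separately, keeping track of exactly which block components are independent and confirming via the appropriate Bianchi identities that the others are implied.
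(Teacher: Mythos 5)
Your overall route is the paper's: form the curvature of $\bnabla=\nabla-\wt A$, note that only the four blocks $\Omega_{\a\b\mu}{}^\g$, $\Omega_{\a\b\infty}{}^\g$, $\Omega_{\a\b\mu}{}^0$, $\Omega_{\a\b\infty}{}^0$ survive, extract (i) from the trace parts and (ii) from the trace-free parts, and use the Bianchi identity to show the remaining components are redundant for $n\ge4$, with separate treatment of $n=2,3$. That is exactly the paper's proof, including the attribution of \eqref{Gauss4} and \eqref{Codazzi3} to the skew and symmetric trace-free parts of the middle block, of \eqref{Gauss3} to the off-diagonal blocks when the Weyl piece degenerates at $n=3$, and of \eqref{Codazzi2} to $\Omega_{\a\b\infty}{}^0$ at $n=2$.

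There is, however, a counting gap in your treatment of the middle block. You claim that the single contraction of a form index with an endomorphism index, together with ${\rm tr}(E+G)=0$ (and ${\rm tf}(E-G)=0$ for $n=2$), already forces \eqref{Eab} and \eqref{Gab}. That contraction yields only one symmetric $2$-tensor identity, in which $E$ and $G$ enter solely through the combination $\tfrac12E_{\a\mu}+\tfrac{n-1}{2}G_{\a\mu}$ (the remaining terms being $\tfrac12(\d A)_{\a\mu}+\tfrac12A_{\a\nu\sigma}A_{\mu}{}^{\nu\sigma}$ plus a multiple of $\bh_{\a\mu}$); one symmetric $2$-tensor equation plus a scalar trace condition cannot determine the two unknown symmetric tensors $E_{\a\b}$ and $G_{\a\b}$. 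The paper resolves this by splitting $\Omega_{\a\b\mu\g}$ into its symmetric and skew parts in $(\mu\g)$ \emph{before} tracing: since $A$ is totally symmetric, $\nabla^hA$ sits entirely in the symmetric part, whose vanishing forces $E_{\a\b}+G_{\a\b}=-\tfrac2n(\d A)_{\a\b}$ together with \eqref{Codazzi3}, while $W^h$ and the quadratic term sit in the skew part, whose vanishing forces $E_{\a\b}-G_{\a\b}=2P^A_{\a\b}$ together with \eqref{Gauss4} when $n\ge4$ (the Weyl-type piece being automatically zero at $n=3$, and at $n=2$ the skew part pinning down only ${\rm tr}(E-G)=|A|^2$, which is precisely why the hypothesis ${\rm tf}(E-G)=0$ is imposed there and not for $n\ge3$). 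Equivalently, you must use both independent traces of the middle block, not one; as written, your step determining $E$ and $G$ fails. Once this is repaired, the rest of your plan goes through as in the paper, with the minor correction that \eqref{Gauss2} arises from the difference of traces $\Omega_{\g\b\infty}{}^\g-\Omega_{\g\b}{}^{\g0}$ rather than from the $\calE[1]\to\calE[-1]$ block.
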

\begin{proof}
We denote the curvature of $\bnabla_\a$ and $\nabla_\a$ by $\Omega^{\bnabla}_{\a\b M}{}^K$ and $\Omega^{\nabla}_{\a\b M}{}^K$ respectively. Then we can compute as 
\begin{align*}
(1/2)\Omega^{\bnabla}_{\a\b M}{}^K&=-\nabla_{[\a}\wt A_{\b]M}{}^K-\wt A_{[\a|M|}{}^L\wt A_{\b]L}{}^K+(1/2)\Omega^{\nabla}_{\a\b M}{}^K \\
&=\begin{pmatrix}
0 & 0 & 0 \\
\Omega_{\a\b\infty}{}^\g & \Omega_{\a\b\mu}{}^\g & 0 \\
\Omega_{\a\b\infty}{}^0 & \Omega_{\a\b\mu}{}^0 & 0
\end{pmatrix},
\end{align*}
where 
\begin{align}
\Omega_{\a\b\mu}{}^\g&=-\nabla^h_{[\a}A_{\b]\mu}{}^\g-\bh_{\mu[\a}E_{\b]}{}^\g-\d_{[\a}{}^\g G_{\b]\mu}-A_{\nu\mu[\a}A_{\b]}{}^{\g\nu}+\frac{1}{2}W^h_{\a\b}{}^\g{}_\mu, \label{Omega1} \\
\Omega_{\a\b\infty}{}^\g&=-\nabla^h_{[\a}E_{\b]}{}^\g+P^h_{\nu[\a}A_{\b]}{}^{\nu\g}-\d_{[\a}{}^\g F_{\b]}-
E_{\nu[\a}A_{\b]}{}^{\g\nu}+Y_{\a\b}{}^\g, \label{Omega2} \\
\Omega_{\a\b\mu}{}^0&=-\nabla^h_{[\a}G_{\b]\mu}+P^h_{\nu[\a}A_{\b]\mu}{}^\nu-\bh_{\mu[\a}F_{\b]}-
A_{\nu\mu[\a}G_{\b]}{}^\nu-Y_{\a\b\mu}, \label{Omega3} \\
\Omega_{\a\b\infty}{}^0&=-\nabla^h_{[\a}F_{\b]}+P^h_{\nu[\a}E_{\b]}{}^\nu+P^h_{\nu[\a}G_{\b]}{}^\nu-E_{\nu[\a}G_{\b]}{}^\nu. \label{Omega4}
\end{align}
By taking the symmetric part of \eqref{Omega1}, we have 
$$
\Omega_{\a\b(\mu\g)}=-\nabla^h_{[\a}A_{\b]\mu\g}-\frac{1}{2}\bh_{\mu[\a}(E_{\b]\g}+G_{\b]\g})
-\frac{1}{2}\bh_{\g[\a}(E_{\b]\mu}+G_{\b]\mu}).
$$
Using the assumption $E_\g{}^\g+G_\g{}^\g=0$, we see that $\Omega_{\a\b(\mu\g)}=0$ if and only if $E_{\b\g}+G_{\b\g}=-(2/n)(\d A)_{\b\g}$ and $A_{\a\b\g}$ satisfies \eqref{Codazzi3}. Note that the equation \eqref{Codazzi3} always holds when $n=2$ since a tensor $K_{\a\b\mu\g}=K_{[\a\b](\mu\g)}$ is determined by the trace $K_{\a\b}{}^\a{}_\g$. Next we take the skew symmetric part of \eqref{Omega1} to obtain
$$
2\,\Omega_{\a\b[\mu\g]}=-\bh_{\mu[\a}(E_{\b]\g}-G_{\b]\g})+\bh_{\g[\a}(E_{\b]\mu}-G_{\b]\mu})-2A_{\nu\mu[\a}A_{\b]\g}{}^\nu+W^h_{\a\b\g\mu}.
$$
It follows that when $n\ge3$, $\Omega_{\a\b[\mu\g]}=0$ if and only if $E_{\b\g}-G_{\b\g}=2P^A_{\b\g}$ and 
$A_{\a\b\g}$ satisfies \eqref{Gauss4}, but in the case $n=3$ the equation \eqref{Gauss4} becomes vacuous since a tensor with the Weyl curvature symmetry automatically vanishes. When $n=2$, using $W^h_{\a\b\g\mu}=0$ and the fact that a tensor $K_{\a\b\mu\g}$ with the Riemannian curvature symmetry is determined by its trace $K_{\a\b}{}^{\a\b}$, we see that $\Omega_{\a\b[\mu\g]}=0$ if and only if $E_\g{}^\g-G_\g{}^\g=|A|^2$. Since we have assumed that ${\rm tf}(E_{\b\g}-G_{\b\g})=0$, this is equivalent to $E_{\b\g}-G_{\b\g}=
(1/2)|A|^2\bh_{\b\g}$. 

Thus the condition $\Omega_{\a\b\mu\g}=0$ determines $E_{\a\b}$ and $G_{\a\b}$ as in \eqref{Eab} and \eqref{Gab}, and imposes the equations \eqref{Gauss4} and \eqref{Codazzi3}. 

By \eqref{Eab}, \eqref{Gab}, \eqref{Omega2} and \eqref{Omega3}, we have
\begin{align*}
\Omega_{\g\b\infty}{}^\g+\Omega_{\g\b}{}^{\g 0}&=
\frac{1}{n}(\d^2A)_\b+A_\b{}^{\g\mu}(P^h_{\mu\g}-P^A_{\mu\g})-(n-1)F_\b, \\
\Omega_{\g\b\infty}{}^\g-\Omega_{\g\b}{}^{\g 0}&=\nabla^h_\b P^A_\g{}^\g-\nabla^h_\g P^A_\b{}^\g
+\frac{1}{n}A_{\b}{}^{\mu\g}(\d A)_{\mu\g}-2Y_{\b\g}{}^\g.
\end{align*}
When $n\ge3$, as $Y_{\b\g}{}^\g=0$, we see that $\Omega_{\g\b\infty}{}^\g-\Omega_{\g\b}{}^{\g 0}=0$ follows from \eqref{Codazzi3}, so the condition 
$\Omega_{\g\b\infty}{}^\g=\Omega_{\g\b}{}^{\g 0}=0$ is equivalent to the equation \eqref{Fa}. On  the other hand, when $n=2$ it holds that
$$
\Omega_{\g\b\infty}{}^\g-\Omega_{\g\b}{}^{\g 0}=\frac{1}{4}\nabla^h_\b|A|^2+\frac{1}{2}A_\b{}^{\mu\g}(\d A)_{\mu\g}-2Y_{\b\g}{}^\g.
$$
Therefore $\Omega_{\g\b\infty}{}^\g=\Omega_{\g\b}{}^{\g 0}=0$ is equivalent to \eqref{Fa} and \eqref{Gauss2}.

Let us  consider the Bianchi identity 
$$
\nabla_{[\nu}\Omega^{\bnabla}_{\a\b]M}{}^K-\wt A_{[\nu|L|}{}^K\Omega^{\bnabla}_{\a\b]M}{}^L+\Omega^{\bnabla}_{[\nu\a|L|}{}^K\wt A_{\b]M}{}^L=0.
$$
When $\Omega_{\a\b\mu\g}=0$, the above equation for $(M, K)=(\mu, \g)$ and $(M, K)=(\infty, \g)$ gives
\begin{align}
\bh_{\mu[\nu}\Omega_{\a\b]\infty}{}^\g+\d_{[\nu}{}^\g\Omega_{\a\b]\mu}{}^0&=0, \label{Bianchi1} \\
\nabla^h_{[\nu}\Omega_{\a\b]\infty}{}^\g+\d_{[\nu}{}^\g\Omega_{\a\b]\infty}{}^0-A_\g{}_{\d[\nu}\Omega_{\a\b]\infty}{}^\d&=0. \label{Bianchi2}
\end{align}
Taking the traces of \eqref{Bianchi1} under the condition $\Omega_{\g\b\infty}{}^\g=\Omega_{\g\b}{}^{\g 0}=0$, we have
$$
(n-2)\Omega_{\a\b\infty}{}^\g+\Omega_{\a\b}{}^{\g0}=0, \quad
\Omega_{\a\b\infty}{}^\g+(n-2)\Omega_{\a\b}{}^{\g0}=0.
$$
It follows that $\Omega_{\a\b\infty}{}^\g=\Omega_{\a\b}{}^{\g0}=0$ when $n\neq3$ and that $\Omega_{\a\b\infty}{}^\g+\Omega_{\a\b}{}^{\g0}=0$ when $n=3$. Then, when $n\neq3$ the equation \eqref{Bianchi2} implies 
$(n-2)\Omega_{\a\b\infty}{}^0=0$ so that $\bnabla$ is flat when $n\ge4$. When $n=2$, it follows from \eqref{Omega4}, \eqref{Eab}, \eqref{Gab}, and \eqref{Fa} that $\Omega_{\a\b\infty}{}^0=0$ is equivalent to the equation \eqref{Codazzi2}. When $n=3$, the equation $\Omega_{\a\b\infty}{}^\g=\Omega_{\a\b}{}^{\g0}=0$ holds if and only if  $\Omega_{\a\b\infty}{}^\g-\Omega_{\a\b}{}^{\g0}=0$, which is equivalent to \eqref{Gauss3}, and if the equation holds, $\Omega_{\a\b\infty}{}^0=0$ follows from \eqref{Bianchi2}. Thus we complete the proof.
\end{proof}

\begin{rem}
\rm When $n=2$ there are surfaces which have the same conformal structure and the same Fubini--Pick form but  have different M\"obius structures, so we need the condition ${\rm tf}(E_{\a\b}-G_{\a\b})=0$ to fix a M\"obius structure. In the case of $\bnabla_\a=\Phi^{-1}\circ\nabla^{\rm proj}_\a\circ\Phi$, we can directly prove the equations \eqref{Eab} and \eqref{Gab} by using \eqref{Pab-Ric}, \eqref{Mobius} and the fact that 
$\Ric_{\a\b}=(1/2)\scal_h\bh_{\a\b}$, $A_{\a\mu\nu}A_{\b}{}^{\mu\nu}=(1/2)|A|^2\bh_{\a\b}$.
\end{rem}

Comparing \eqref{EGF} with \eqref{Eab} and \eqref{Gab}, we obtain the following identities:

\begin{align}
{\rm tf}\bS_{\a\b}&=\frac{1}{n}(\d A)_{\a\b}+{\rm tf}(P^h_{\a\b}-P^A_{\a\b}), \label{tfS} \\
{\rm tf}P_{\a\b}&=-\frac{1}{n}(\d A)_{\a\b}+{\rm tf}(P^h_{\a\b}-P^A_{\a\b}). \label{tfP}
\end{align}

Next we compare the Levi--Civita connection $\wt\nabla$ of the ambient metric and the conformal tractor connection. We decompose $\wt A_{\a J}{}^K$ into the symmetric and the skew symmetric parts:
$$
\wt A_{\a J}{}^K={\wt A}^\prime_{\a J}{}^K+{\wt A}^\prime{}^\prime_{\a J}{}^K, 
\quad {\wt A}^\prime_{\a JK}=\wt A_{\a (JK)}, \ {\wt A}^\prime{}^\prime_{\a JK}=\wt A_{\a [JK]}.
$$
Then we have the following proposition:
\begin{prop}
Let $\br$ be a Fefferman defining density and let $\wt\nabla_I$ be the Levi--Civita connection of the ambient metric $\wt g_{IJ}=D_ID_J\br$. Then the restriction  $\wt\nabla_\a$ is well-defined and satisfies
$$
\Phi^{-1}\circ\wt\nabla_\a\circ\Phi=\nabla_\a-{\wt A}^\prime{}^\prime_{\a J}{}^K,
$$
where $\nabla_\a$ is the conformal (M\"obius) tractor connection. In particular, $\Phi^{-1}\circ\wt\nabla_\a\circ\Phi=\nabla_\a$ if and only if $A_{\a\b\g}=0$.
\end{prop}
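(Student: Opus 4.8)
The plan is to compare three connections on $\calE^I\cong\wt\calE^I|_M$: the restricted Levi--Civita connection $\wt\nabla_\a$, the connection $\bnabla_\a=\Phi^{-1}\circ\nabla^{\rm proj}_\a\circ\Phi$ of \eqref{proj-conf}, and the conformal (M\"obius) tractor connection $\nabla_\a$. Two structural observations do most of the work. First, $\wt\nabla$ preserves $\wt g_{IJ}$, and since $\br$ is a Fefferman defining density we have $\Phi^\ast\wt g_{IJ}=h_{IJ}$ by the preceding proposition; hence $\Phi^{-1}\circ\wt\nabla_\a\circ\Phi$ preserves $h_{IJ}$, and as $\nabla_\a$ does too, the difference $\Phi^{-1}\circ\wt\nabla_\a\circ\Phi-\nabla_\a$ is skew-symmetric with respect to $h_{IJ}$. (The well-definedness of $\wt\nabla_\a$ on $\calE^I$, like that of $\nabla^{\rm proj}_\a$, follows from the homogeneity of $\wt g_{IJ}$ under the $\R_+$-action on $\wt N$, which makes the Levi--Civita connection of $\wt g_{IJ}$ descend from $T\wt N$ near $M$ to the tractor bundle over $M$.)

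Second, and this is the point requiring care, the contorsion $\wt\nabla_\a-\nabla^{\rm proj}_\a$, regarded as an endomorphism-valued $1$-form with its upper index lowered by $\wt g_{IJ}$, is \emph{symmetric}. To see this I would use the cone realization: local flatness of $[\nabla]$ provides, near each point, a flat torsion-free connection $\bar\nabla$ on $\wt N$ inducing $\nabla^{\rm proj}$, under which $D_I=\bar\nabla_I$ on homogeneous densities, so that $\wt g_{IJ}=D_ID_J\br=\bar\nabla_I\bar\nabla_J\br$ is the ordinary $\bar\nabla$-Hessian of the degree-$2$ lift of $\br$. Since $D_{[I}D_{J]}=0$ for locally flat structures, $\bar\nabla_I\wt g_{JK}=\bar\nabla_I\bar\nabla_J\bar\nabla_K\br$ is totally symmetric, and the Koszul formula gives $\wt\nabla_I=\bar\nabla_I+\Theta_I{}^J{}_K$ with $\Theta_{IJK}=\tfrac12\bar\nabla_I\wt g_{JK}$ totally symmetric. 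Restricting the first index to $TM$ and conjugating the fibre indices by the isometry $\Phi$ (which does not touch the $\a$-index) gives the asserted $h$-symmetry; a direct expansion of $\wt\nabla_\a-\nabla^{\rm proj}_\a$ from \eqref{tract-conn} and the components of $\wt g_{IJ}$ in a scale is an alternative but messier route.

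The conclusion is then linear algebra. Writing $\Phi^{-1}\circ\wt\nabla_\a\circ\Phi=\bnabla_\a+\Phi^{-1}\circ(\wt\nabla_\a-\nabla^{\rm proj}_\a)\circ\Phi$ and inserting $\bnabla_\a=\nabla_\a-\wt A_{\a J}{}^K$, we find that $\Phi^{-1}\circ\wt\nabla_\a\circ\Phi-\nabla_\a$ equals $-\wt A_{\a J}{}^K$ plus an $h$-symmetric endomorphism. By the first observation this difference is $h$-skew; decomposing $\wt A_{\a J}{}^K$ with respect to $h_{IJ}$ into its symmetric part ${\wt A}^\prime_{\a J}{}^K$ and skew part ${\wt A}^\prime{}^\prime_{\a J}{}^K$, the $h$-symmetric correction must therefore be exactly ${\wt A}^\prime_{\a J}{}^K$, leaving $\Phi^{-1}\circ\wt\nabla_\a\circ\Phi=\nabla_\a-{\wt A}^\prime{}^\prime_{\a J}{}^K$.

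Finally, for the last assertion I would compute ${\wt A}^\prime{}^\prime_{\a JK}$ from \eqref{proj-conf} by lowering the upper index with $h_{IJ}$, which pairs the $\calE[1]$- and $\calE[-1]$-slots and restricts to $\bh_{\a\b}$ on the $\calE^\a[-1]$-slot. The block on the middle slot is $A_{\a\g}{}^\b$, whose skew part $A_{\a[\g\b]}$ vanishes by Proposition \ref{sym-A-S}, so the only surviving skew component is the one pairing the middle slot with the $\calE[1]$-slot, namely $\tfrac12(G_{\a\g}-E_{\a\g})=-P^A_{\a\g}$ by \eqref{Eab} and \eqref{Gab}. Since $P^A_{\a\b}$ has trace $\tfrac1{2(n-1)}|A|^2$ for $n\ge3$ and equals $\tfrac14|A|^2\bh_{\a\b}$ for $n=2$, its vanishing forces $|A|^2=0$, hence $A_{\a\b\g}=0$, and the converse is immediate; thus ${\wt A}^\prime{}^\prime_\a=0$ if and only if $A_{\a\b\g}=0$. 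The main obstacle I anticipate is the second step---pinning down the identification $\wt g_{IJ}=\bar\nabla_I\bar\nabla_J\br$ in the cone picture (equivalently, verifying directly that the lowered contorsion is symmetric) together with the homogeneity descent behind the well-definedness of $\wt\nabla_\a$.
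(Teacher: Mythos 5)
Your proof is correct, and its middle step takes a genuinely different route from the paper's. The paper also starts from $\wt\nabla_I=D_I+\wt\Gamma_{IJ}{}^K$ with $\wt\Gamma_{IJK}=\tfrac12 D_ID_JD_K\br\in\wt\calE_{(IJK)}(-1)$ (your cone computation $\Theta_{IJK}=\tfrac12\bar\nabla_I\bar\nabla_J\bar\nabla_K\br$ is the same fact in different clothing), but it then \emph{explicitly computes} the projecting part $\wt\Gamma_{ijk}$ and evaluates it in an adapted frame, obtaining the component identities $\wt\Gamma_{\a\b\g}=A_{\a\b\g}$, $\wt\Gamma_{\a\b\infty}=\tfrac12(E_{\a\b}+G_{\a\b})$, $\wt\Gamma_{\a\infty\infty}=F_\a$ via \eqref{conn-adapted}, \eqref{r-M}, \eqref{EGF}, and reads off $\Phi^\ast\wt\Gamma_{\a J}{}^K={\wt A}^\prime_{\a J}{}^K$. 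You instead bypass the frame computation entirely: total symmetry of the contorsion plus the fact that both $\Phi^{-1}\circ\wt\nabla_\a\circ\Phi$ and $\nabla_\a$ preserve $h_{IJ}$ (using $\Phi^\ast\wt g_{IJ}=h_{IJ}$ from the preceding proposition) forces the symmetric correction to equal ${\wt A}^\prime_{\a J}{}^K$ by pure linear algebra. This is cleaner for the proposition itself, at the cost of not producing the explicit identities $\wt\Gamma_{\a\b\g}=A_{\a\b\g}$ etc.\ that the frame computation yields as a by-product. Your closing identification of ${\wt A}^{\prime\prime}$ with $\pm P^A_{\a\b}$ and the trace argument ${\rm tr}\,P^A=\tfrac1{2(n-1)}|A|^2$ matches the paper. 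One small point to tighten: for the well-definedness of $\wt\nabla_\a$ on the quotient bundle, homogeneity of $\wt g_{IJ}$ alone is not quite enough (one needs $T^I\wt\nabla_I\nu^J=0$ to kill the ambiguity in lifting tangent directions, which uses that $T_I=D_I\br$ is exact); but this drops out of your own setup, since $T^I\wt\Gamma_{IJK}=\tfrac12 T^ID_I(D_JD_K\br)=0$ by the weight-zero homogeneity of $D_JD_K\br$ and $T^ID_I\nu^J=0$ for weight-zero tractors, which is exactly the paper's argument.
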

\begin{proof}
Since $D_I$ is flat, the connection $\wt\nabla_I$ satisfies $\wt\nabla_I=D_I+\wt\Gamma_{IJ}{}^K$, where $\wt\Gamma_{IJK}=(1/2)D_ID_JD_K\br\in\wt\calE_{(IJK)}(-1)$. It follows that 
$T^I\wt\nabla_I\nu^J=0$ for any $\nu^I\in\wt\calE^I$, so one can define $\wt\nabla_i\nu^J$ invariantly and hence $\wt\nabla_\a\nu^J$ is well-defined on $\wt\calE^I|_M$. The projecting part of 
$\wt\Gamma_{IJK}$ is computed as
$$
2\wt\Gamma_{ijk}=\nabla_i\nabla_j\nabla_k\br+2\br\nabla_iP_{jk}+2P_{jk}\nabla_i\br+P_{ij}\nabla_k\br
+P_{ik}\nabla_j\br.
$$
We fix a projective scale $\tau\in\calE(1)$, and work in an adapted frame $\{e_\infty=\xi, e_\a\}$. 
Using \eqref{conn-adapted}, \eqref{r-M}, and \eqref{EGF}, we can show that the identities 
$$
\wt\Gamma_{\a\b\g}=A_{\a\b\g},\quad \wt\Gamma_{\a\b\infty}=\frac{1}{2}(E_{\a\b}+G_{\a\b}), \quad
\wt\Gamma_{\a\infty\infty}=F_\a
$$
hold on the hypersurface $M$. It follows that 
$$
\Phi^{\ast}\wt\Gamma_{\a J}{}^K=
\begin{pmatrix}
0 & 0 & 0 \\
\frac{1}{2}(E_\a{}^\g+G_\a{}^\g) & A_{\a\b}{}^\g & 0 \\
F_\a & \frac{1}{2}(E_{\a\b}+G_{\a\b}) & 0
\end{pmatrix}
={\wt A}^\prime{}_{\a J}{}^K.
$$
Thus we have $\Phi^{-1}\circ\wt\nabla_\a\circ\Phi=\bnabla_\a+{\wt A}^\prime{}_{\a J}{}^K=\nabla_\a-{\wt A}^\prime{}^\prime_{\a J}{}^K$. 

By \eqref{Eab} and \eqref{Gab}, it holds that
$$
{\wt A}^\prime{}^\prime_{\a JK}=
\begin{pmatrix}
0 & 0 & 0 \\
P^A_{\a\g} & 0 & 0 \\
0 & -P^A_{\a\b} & 0
\end{pmatrix}.
$$
Since ${\rm tr}P^A=(1/2(n-1))|A|^2$, the condition ${\wt A}^\prime{}^\prime_{\a JK}=0$ is equivalent to $A_{\a\b\g}=0$.
\end{proof}

\subsection{Conformal Codazzi manifold}
As an intrinsic geometric structure of a hypersurface in a locally flat projective manifold, we define the {\it conformal Codazzi structure} as follows:
\begin{dfn}\label{conf-Codazzi}
Let $M$ be a $C^\infty$-manifold of dimension $n$. When $n\ge3$, a conformal Codazzi structure is a pair $([h], A_{\a\b\g})$, where $[h]$ is a conformal structure and $A_{\a\b\g}\in\calE_{(\a\b\g)_0}[2]$ satisfies the Gauss--Codazzi equations \eqref{Gauss4}, \eqref{Codazzi3} for $n\ge4$ and \eqref{Gauss3}, \eqref{Codazzi3} for $n=3$. When $n=2$, a conformal Codazzi structure is a triad $([h], [\mathbb{P}], A_{\a\b\g})$, where 
$[h]$ is a conformal structure, $[\mathbb{P}]$ is a M\"obius structure and $A_{\a\b\g}\in\calE_{(\a\b\g)_0}[2]$ satisfies the Gauss--Codazzi equations \eqref{Gauss2}, \eqref{Codazzi2}.
\end{dfn}

We will call $(M, [h], A_{\a\b\g})$ a conformal Codazzi manifold, suppressing the M\"obius structure $[\mathbb{P}]$ when $n=2$. On the conformal tractor bundle over a conformal Codazzi manifold, there is a flat connection $\bnabla$ defined by \eqref{proj-conf} with \eqref{Eab}, \eqref{Gab}, and \eqref{Fa}. We call the connection $\bnabla$ 
the {\it conformal Codazzi tractor connection}.

We observe that when $A_{\a\b\g}$ satisfies the Gauss--Codazzi equations, then $A^\ast_{\a\b\g}:=-A_{\a\b\g}$ also satisfies the equations. We call $([h], A^\ast_{\a\b\g})$ the {\it dual conformal Codazzi structure}. 
The corresponding conformal Codazzi tractor connection $\bnabla^\ast$ is given by 
$$
\bnabla^\ast_\a=\nabla_\a-{\wt A}^\prime_{\a J}{}^K+{\wt A}^\prime{}^\prime_{\a J}{}^K,
$$
and called the {\it dual conformal Codazzi tractor connection}. It is the dual connection of $\bnabla$ with respect to the conformal tractor metric $h_{IJ}$ in the sense that 
$$
\nabla^h_\a(h_{IJ}U^IV^J)=h_{IJ}(\bnabla_\a U^I)V^J+h_{IJ}U^I(\bnabla^\ast_\a V^J)
$$
holds for $U^I, V^I\in\calE^I$.

Recall from \cite{BEG} that the conformal tractor connection is defined via the prolongation of the conformal-to-Einstein equation ${\rm tf}(\nabla^h_\a\nabla^h_\b\sigma+P^h_{\a\b}\sigma)=0$ for $\sigma\in\calE[1]$.
We will see that when $n\ge3$ the conformal Codazzi tractor connection is also defined via the prolongation of a conformally invariant differential equation. We set   
$$
{\mathcal P}_{\a\b}:=-\frac{1}{n}(\d A)_{\a\b}+{\rm tf}(P^h_{\a\b}-P^A_{\a\b})\in\calE_{(\a\b)_0}.
$$
When $M$ is a hypersurface of a locally flat projective manifold, the tensor $\mathcal P_{\a\b}$ agrees with 
${\rm tf}P_{\a\b}$ by \eqref{tfS}. We consider a conformally invariant differential equation
\begin{equation}\label{diff-eq}
{\rm tf}(\nabla^h_\a\nabla^h_\b\sigma)+A_{\a\b}{}^\g\nabla^h_\g\sigma+\mathcal P_{\a\b}\sigma=0
\end{equation}
for a density $\sigma\in\calE[1]$. Suppose that $\sigma$ is a solution to \eqref{diff-eq} and set $\mu_\a=\nabla^h_\a\sigma$. Then there exists a density $\varphi\in\calE[-1]$ such that
\begin{equation}\label{nabla-mu}
\nabla^h_\a\mu_\b+A_{\a\b}{}^\g \mu_\g+\varphi\bh_{\a\b}+\Bigl(-\frac{1}{n}(\d A)_{\a\b}+P^h_{\a\b}-P^A_{\a\b}\Bigr)\sigma=0.
\end{equation}
Thus the $2$-jet of $\sigma$ at a point $x\in M$ is determined by $(\sigma_x, (\mu_\a)_x, \varphi_x)$. If we change the conformal scale by $\U$, the triad ${}^t(\sigma, \mu^\a, \varphi)$ transforms as \eqref{conf-tract-trans}. Therefore we have the following isomorphism:
$$
\calE_I\cong\{j^2\sigma\in J^2\calE[1]\ |\ {\rm tf}(\nabla^h_\a\nabla^h_\b\sigma)+A_{\a\b}{}^\g\nabla^h_\g\sigma+\mathcal P_{\a\b}\sigma=0\},
$$
where $j^2\sigma$ denotes the 2-jet of $\sigma$ and $J^2\calE[1]$ denotes the jet bundle. Taking the divergence of \eqref{nabla-mu} and using \eqref{Codazzi3}, we have
$$
\nabla^h_\a\varphi+(E_{\a\g}-P^h_{\a\g})\mu^\g+F_\a\sigma=0.
$$
It follows that the linear connection on $\calE_I$ defined by the prolongation of the equation \eqref{diff-eq} coincides with $\bnabla$. Thus, $\sigma\in\calE[1]$ is a solution to \eqref{diff-eq} if and only if there exist 
$\mu_\a\in\calE_\a[1]$ and $\varphi\in\calE[-1]$ such that
$$
\bnabla_\a
\begin{pmatrix}
\sigma \\
\mu_\b \\
\varphi
\end{pmatrix}=0.
$$

We shall consider the dual of this construction. We set
$$
{\mathcal S}_{\a\b}:=\frac{1}{n}(\d A)_{\a\b}+{\rm tf}(P^h_{\a\b}-P^A_{\a\b})\in\calE_{(\a\b)_0}.
$$
The tensor ${\mathcal S}_{\a\b}$ is the dual of ${\mathcal P}_{\a\b}$ and agrees with ${\rm tf}\bS_{\a\b}$ by \eqref{tfS} when $M$ is a hypersurface of a locally flat projective manifold. The dual conformal Codazzi tractor connection $\bnabla^\ast$ is given by the prolongation of the equation dual to \eqref{diff-eq}:
\begin{equation}\label{diff-eq-dual}
{\rm tf}(\nabla^h_\a\nabla^h_\b\sigma)-A_{\a\b}{}^\g\nabla^h_\g\sigma+\mathcal S_{\a\b}\sigma=0.
\end{equation}

A conformal scale $\sigma\in\calE[1]$ is a solution to \eqref{diff-eq} or \eqref{diff-eq-dual} if and only if 
it holds in the scale $\sigma$ that $\mathcal P_{\a\b}=0$ or $\mathcal S_{\a\b}=0$ respectively. Since $\bnabla$ and $\bnabla^\ast$ are flat, such a scale always exists locally. Thus we obtain the following proposition:
\begin{prop}
Let $(M, [h], A_{\a\b\g})$ be a conformal Codazzi manifold of dimension $n\ge3$. Then there exists a local conformal scale for which $\mathcal P_{\a\b}=0$ holds, and there also exists one for which $\mathcal S_{\a\b}=0$.
\end{prop}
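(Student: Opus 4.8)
The plan is to derive the statement directly from the flatness of the conformal Codazzi tractor connection $\bnabla$ (and of its dual $\bnabla^\ast$), combined with the prolongation descriptions of \eqref{diff-eq} and \eqref{diff-eq-dual} established just above. The point is that a conformal scale $\sigma\in\calE[1]$ with $\mathcal P_{\a\b}=0$ in the scale $\sigma$ is precisely the projecting slot of a $\bnabla$-parallel section of $\calE_I$, and flat connections admit local parallel sections with arbitrary value at a point.

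First I would record that $\bnabla$ is flat: on a conformal Codazzi manifold the connection \eqref{proj-conf} with coefficients fixed by \eqref{Eab}, \eqref{Gab}, \eqref{Fa} satisfies the hypotheses of Proposition \ref{GC-prop} by virtue of the Gauss--Codazzi equations built into Definition \ref{conf-Codazzi}, hence has vanishing curvature. In the same way $\bnabla^\ast=\nabla_\a-{\wt A}^\prime_{\a J}{}^K+{\wt A}^\prime{}^\prime_{\a J}{}^K$ is flat, since it is the conformal Codazzi tractor connection of the dual structure $([h],A^\ast_{\a\b\g})$ with $A^\ast_{\a\b\g}=-A_{\a\b\g}$, and $A^\ast_{\a\b\g}$ again satisfies the Gauss--Codazzi equations; equivalently, $\bnabla^\ast$ is the dual of $\bnabla$ with respect to $h_{IJ}$ and metric-duality preserves flatness.

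Next I would fix $x_0\in M$ and a simply connected neighborhood $W\ni x_0$. Flatness makes $\bnabla$-parallel transport over $W$ path-independent, so for any prescribed value $U_0\in(\calE_I)_{x_0}$ there is a unique $\bnabla$-parallel section $U$ over $W$ with $U_{x_0}=U_0$. Choosing $U_0$ so that its projecting slot $\sigma_{x_0}\in\calE[1]_{x_0}$ is positive and then shrinking $W$, continuity gives $\sigma>0$ on $W$, so $\sigma$ is a conformal scale there. By the prolongation description preceding the proposition, the projecting slot of a $\bnabla$-parallel section solves \eqref{diff-eq}, and a conformal scale $\sigma$ solving \eqref{diff-eq} has $\mathcal P_{\a\b}=0$ in the scale $\sigma$: in its own scale $\nabla^h_\a\sigma=0$, so \eqref{diff-eq} collapses to $\mathcal P_{\a\b}\sigma=0$ with $\sigma$ nowhere vanishing. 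Running the identical argument with $\bnabla^\ast$ and \eqref{diff-eq-dual} yields a local conformal scale in which $\mathcal S_{\a\b}=0$.

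The argument is essentially formal once flatness is available; the only genuine point is arranging the projecting slot to be nonvanishing, which is exactly why the conclusion is local — a global parallel section with everywhere positive projecting part is a nontrivial monodromy/holonomy condition and need not exist. A secondary point worth flagging is that $n\ge3$ enters only because the prolongation of \eqref{diff-eq} to $\bnabla$ was carried out in that range using \eqref{Codazzi3}; for $n=2$ the M\"obius structure is involved and no analogous statement is claimed.
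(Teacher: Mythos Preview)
Your proof is correct and follows essentially the same approach as the paper: the paper's argument is the terse observation that since $\bnabla$ and $\bnabla^\ast$ are flat, local parallel sections exist, and a conformal scale $\sigma$ solves \eqref{diff-eq} (resp.\ \eqref{diff-eq-dual}) if and only if $\mathcal P_{\a\b}=0$ (resp.\ $\mathcal S_{\a\b}=0$) in the scale $\sigma$. Your version spells out the details---path-independence of parallel transport on a simply connected neighborhood, arranging positivity of the projecting slot by choice of initial value and shrinking, and the collapse of \eqref{diff-eq} in its own scale---but the underlying idea is identical.
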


\subsection{Projective Bonnet theorem}
We will prove that the conformal Codazzi structure recovers the local immersion to the projective space and hence it has complete information on the local geometric structure of a hypersurface in a locally flat projective manifold. 

\begin{thm}\label{proj bonnet}
Let $(M, [h], A_{\a\b\g})$ be a conformal Codazzi manifold of dimension $n\ge2$. Then for any point $p_0\in M$ there exists a neighborhood $U\subset M$ and an immersion $f:U\rightarrow \R\mathbb P^{n+1}$ such that the induced conformal Codazzi structure agrees with $([h], A_{\a\b\g})$. 
\end{thm}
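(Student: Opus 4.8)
The plan is to reconstruct the immersion from the flat conformal Codazzi tractor connection $\bnabla$ and the tractor metric $h_{IJ}$, reversing the construction that identifies $\calE^I\cong\wt\calE^I|_M$. Since $(M,[h],A_{\a\b\g})$ is a conformal Codazzi manifold, Proposition \ref{GC-prop} guarantees that the connection $\bnabla$ on $\calE^I$ defined by \eqref{proj-conf} with $E_{\a\b},G_{\a\b},F_\a$ given by \eqref{Eab}, \eqref{Gab}, \eqref{Fa} is flat. First I would pass to a simply connected neighborhood $U$ of $p_0$ and use flatness to trivialize: choose a $\bnabla$-parallel frame $\{\U^I_0,\dots,\U^I_{n+1}\}$ of $\calE^I|_U$. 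Because $\bnabla$ preserves the tractor metric $h_{IJ}$ (one checks from \eqref{proj-conf} that $h_{IJ}$ is $\bnabla$-parallel, using $E_\g{}^\g+G_\g{}^\g=0$ and the skew form of $\wt A_{\a K}{}^J$), the Gram matrix $h_{IJ}\U^I_a\U^J_b$ is constant; after a linear change we may take the parallel frame orthonormal for the Lorentz form of signature $(n+1,1)$. This identifies $\calE^I|_U$ with the trivial bundle $U\times\R^{n+2}$ equipped with a fixed inner product $\langle\,,\,\rangle$ of Lorentz signature.

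Next I would produce the map to projective space from the canonical tractor section. Recall the Euler-type section: in the splitting determined by a conformal scale $\kappa$, the element $X^I={}^t(0,0,1)\in\calE^I[1]$ (the image of $\calE[-1]\hookrightarrow\calE^I$) is conformally invariant, and under $\Phi$ it corresponds to the projective Euler field, hence spans the canonical null line. Writing $X^I$ in the parallel frame gives a map $\wt f:U\to\R^{n+2}$, well-defined up to the scaling $\calE[1]$, whose image lies in the null cone of $\langle\,,\,\rangle$; projectivizing yields $f:U\to\R\mathbb P^{n+1}$, landing in the quadric $\{[\,v\,]:\langle v,v\rangle=0\}$, which is the flat model of conformal geometry but sits inside $\R\mathbb P^{n+1}$ as a strictly convex hypersurface (a sphere). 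I would then check that $f$ is an immersion: the differential of $\wt f$ is computed by differentiating $X^I$, and $\bnabla_\a X^I$ has nonzero middle slot $\mu^\b=-\d_\a{}^\b$ coming from the $-\mu_\a$ term in \eqref{conf-tract-conn} (this part of $\bnabla$ agrees with $\nabla$), so $df$ has rank $n$ modulo the radial direction. This is exactly the statement that $X^I$ together with $\bnabla_\a X^I$ span an $(n+1)$-dimensional subbundle transverse to $X^I$, which is the standard nondegeneracy of the conformal-to-Einstein-type BGG operator.

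Finally I would verify that the conformal Codazzi structure induced on $f(U)\subset\R\mathbb P^{n+1}$ by the construction of \S5.1 (affine metric, Fubini--Pick form from the affine normal) recovers $([h],A_{\a\b\g})$. The key is that the comparison of the ambient projective tractor connection with the conformal tractor connection, Proposition on $\bnabla_\a=\Phi^{-1}\circ\nabla^{\mathrm{proj}}_\a\circ\Phi$, runs in reverse: the projective structure pulled back by $f$ is flat, its restricted tractor bundle is canonically $\calE^I$, and the induced connection on $\calE^I$ is $\bnabla$ by construction; since the formulas \eqref{EGF} express the extrinsic data (in particular ${\rm tf}\bS_{\a\b}$ and $A_{\a\b\g}$) in terms of the components of $\bnabla-\nabla$, and \eqref{Eab}--\eqref{Fa} pin these components down in terms of $[h]$ and $A_{\a\b\g}$, the induced Fubini--Pick form equals the prescribed $A_{\a\b\g}$ and the induced conformal class equals $[h]$ (with $[\mathbb P]$ matched when $n=2$, where the extra hypothesis ${\rm tf}(E_{\a\b}-G_{\a\b})=0$ built into the conformal Codazzi axioms is exactly what fixes the M\"obius structure). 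The main obstacle I anticipate is precisely this last identification for $n=2$: there one must carefully track how the M\"obius structure $[\mathbb P]$ enters the tractor connection \eqref{conf-tract-conn} through $P^h_{\a\b}=\mathbb P_{\a\b}$, confirm that the immersed surface's induced M\"obius structure (coming from ${\rm tf}\bS$ and $(\d A)$ via \eqref{Mobius}) coincides with the given one, and handle the fact noted in the Remark that $([h],A_{\a\b\g})$ alone does not determine $[\mathbb P]$ in dimension two. In dimensions $n\ge3$ the flatness and the uniqueness in Proposition \ref{GC-prop} make the matching automatic, so the real work is the two-dimensional bookkeeping and the verification that $f$ is a genuine immersion rather than just an infinitesimal one.
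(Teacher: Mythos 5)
Your overall skeleton---trivialize the flat connection $\bnabla$ by a parallel frame, projectivize the canonical line subbundle $\calE[-1]\subset\calE^I$ to get $f$, check the immersion property from the $-\mu_\a$ slot, and then read off the induced extrinsic data from the components of $\bnabla-\nabla$---is the same as the paper's. But there is a fatal error in the middle step. You claim that $\bnabla$ preserves the tractor metric $h_{IJ}$ because $\wt A_{\a K}{}^J$ is ``skew''; it is not. The perturbation decomposes as $\wt A=\wt A'+\wt A''$ with symmetric part $\wt A'_{\a(JK)}$ containing the Fubini--Pick form $A_{\a\b\g}$ (and $\tfrac12(E+G)$, $F_\a$) in its slots, and the paper explicitly notes that the metric-compatibility identity involves the \emph{dual} connection: $\nabla^h_\a(h_{IJ}U^IV^J)=h_{IJ}(\bnabla_\a U^I)V^J+h_{IJ}U^I(\bnabla^\ast_\a V^J)$ with $\bnabla^\ast\neq\bnabla$ unless $A_{\a\b\g}=0$. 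Consequently the Gram matrix of a $\bnabla$-parallel frame is not constant, you cannot normalize the frame to be orthonormal for a fixed Lorentz form, and the image of $f$ does not lie in a fixed null quadric. This is not a repairable detail: if $f(U)$ landed in the quadric (a sphere in $\R\mathbb{P}^{n+1}$), the Blaschke--Pick--Berwald theorem would force the induced Fubini--Pick form to vanish, contradicting the very conclusion you need, namely that the induced form equals the prescribed (generally nonzero) $A_{\a\b\g}$. The target of the theorem is a general strictly convex hypersurface, not the flat conformal model.

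What $\bnabla$ does preserve is the tractor \emph{volume form} $\Psi$, since $\wt A_{\a J}{}^J=0$ (using $A_{\a\g}{}^\g=0$). This is exactly the invariance one needs: it lets you choose the parallel frame to be unimodular, so the trivialization $\phi:\mathcal{T}|_U\to U\times\R^{n+2}$ is well defined up to $SL(n+2,\R)$, and $f(p):=[\phi(\calE[-1]_p)]$ maps into $\R\mathbb{P}^{n+1}$ with its flat projective (not conformal) structure. The remaining verification must then be done against the affine-hypersurface data in $\R^{n+1}\cong\{x^0=1\}$: one identifies the affine normal field with the projection of $\phi(Y)$, $Y={}^t(1,0,0)$, computes $\nabla^{\mathring\tau}_{f_\ast X}f_\ast Z=\phi(\bnabla_XZ)$ to read off the second fundamental form $h$ and the induced connection $\nabla^h_\a-A_{\a\b}{}^\g$, and for $n=2$ extracts the shape operator from $\bnabla_XY$ to match the M\"obius structure. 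Your final paragraph gestures at this correctly in spirit, but as written it rests on the false premise that the image is the conformal quadric, so the argument does not go through.
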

\begin{proof}
We denote by $\mathcal T$ the conformal tractor bundle over $M$. Let $\Psi\in\wedge^{n+2}{\mathcal T}^\ast$ be the volume form of the tractor metric. Then since $\wt A_{\a J}{}^J=0$, the conformal Codazzi tractor connection $\bnabla$ preserves $\Psi$. It follows from the flatness of $\bnabla$ that there exist a neighborhood $U$ of any point $p_0\in M$ and a frame $\{e_I\}$ for $\mathcal T|_U$ which is parallel with respect to $\bnabla$ and unimodular with respect to $\Psi$. We write $\phi:\mathcal T|_U\rightarrow U\times\R^{n+2}$ for the trivialization via the frame $\{e_I\}$. Then we have $\phi(\bnabla_X V)=X\phi(V)$ for $X\in\Gamma(TU)$ and $V\in\Gamma(\mathcal T|_U)$. Since $\mathcal T$ contains $\calE[-1]$ as a line subbundle, we can define the map $f: U\rightarrow \R\mathbb P^{n+1}$ by $f(p):=[\phi(\calE[-1]_p)]$.
Let $\mathring{\mathcal T}$ and $\mathring{\calE}(-1)$ be respectively the projective tractor bundle over $\R\mathbb P^{n+1}$ and the projective density bundle of weight $-1$ over $\R\mathbb P^{n+1}$. Then $\mathring{\mathcal T}$ is the trivial bundle and $\mathring{\calE}(-1)$ coincides with the tautological line bundle. Moreover the conformal Codazzi tractor connection on $\mathring{\mathcal T}$, which we denote by $\mathring{\bnabla}$, is the canonical flat connection on $\R\mathbb P^{n+1}\times \R^{n+2}$. 
By the definition of $f$ it holds that 
$$
f^\ast\mathring{\mathcal T}=\mathcal T|_U, \quad f^\ast\mathring{\bnabla}=\bnabla, \quad f^\ast
\mathring{\calE}(-1)=\calE[-1].
$$
Let $(x^0,\dots, x^{n+1})$ be the homogeneous coordinates on $\R\mathbb P^{n+1}$. As we may assume that $f(U)\subset \{x^0\neq0\}$, we identify $f$ with a map to the affine space $\R^{n+1}\cong\{x^0=1\}$.
The hyperplane $\{x^0=1\}$ in $\R^{n+2}$ defines a section of $\mathring{\calE}(-1)$ and hence defines a projective scale $\mathring\tau\in\mathring{\calE}(1)$. We decompose $\mathcal T|_U$ by the conformal scale $\tau:=f^\ast\mathring\tau$ as 
$$
\mathcal T|_U=\calE\oplus TU\oplus\calE.
$$
Setting $T:={}^t(0, 0, 1)\in\Gamma(\mathcal T|_U)$, we have $f=\phi(T)$ and hence
$$
f_\ast X=X\phi(T)=\phi(\bnabla_X T)=\phi(X)
$$
for any $X\in\Gamma(TU)$. Therefore, $f_\ast=\phi:TU\rightarrow\R^{n+1}$ and $f$ is an immersion. 

Let us consider the conformal Codazzi structure induced on the hypersurface $f(U)\subset\R^{n+1}$.
We set $Y:={}^t(1, 0, 0)\in\mathcal T|_U$. Let $\pi:\mathring{\mathcal T}|_{\R^{n+1}}\rightarrow T\R^{n+1}$
be the projection along $\phi(T)$ and set $\xi:=\pi(\phi(Y))\in\Gamma(f(U), T\R^{n+1})$. We will see that 
$\xi$ is identical to the affine normal field on $f(U)$ for the projective scale $\mathring\tau$. First, let 
$\mathring h$ be the affine second fundamental form on $f(U)$ with respect to $\xi$. We note that the representative connection $\nabla^{\mathring\tau}$ for the scale $\mathring\tau$ agrees with the canonical flat connection on $\R^{n+1}$. For $X, Z\in\Gamma(TU)$, we have 
$$
\nabla^{\mathring\tau}_{f_\ast X}f_\ast Z=\mathring{\bnabla}_{f_\ast X}f_\ast Z
=f_\ast\bnabla_X Z=\phi(\bnabla_X Z),
$$
where $\bnabla_X Z$ is of the form
\begin{equation}\label{nabla-X-Z}
\bnabla_X Z=
\begin{pmatrix}
-h_{\a\b}X^\a Z^\b \\
X^\b(\nabla^h_\b Z^\a-A_{\b\g}{}^\a Z^\g) \\
\ast
\end{pmatrix}
\end{equation}
with $h_{\a\b}:=\tau^{-2}\bh_{\a\b}$. Since the $\xi$-component of $\phi(\bnabla_X Z)$ equals the $Y$-component of $\bnabla_X Z$, it follows that $f^\ast \mathring h=h$. We set $\mathring\Psi:=dx^0\wedge\cdots\wedge dx^{n+1}\in\wedge^{n+2}{\mathring{\mathcal T}}^\ast$ so that $\phi^\ast\mathring\Psi=\Psi$ holds. Noting that ${\mathring\tau}^{-(n+2)}=dx^1\wedge\cdots\wedge dx^{n+1}$, $\phi(T)=\partial/\partial x^0+x^i\partial/\partial x^i$, and $\phi(Y)\equiv\xi$ mod $\phi(T)$, we have
\begin{align*}
f^\ast(\xi\lrcorner\,{\mathring\tau}^{-(n+2)})|_{Tf(U)}
&=\phi^\ast(\phi(Y)\lrcorner\,\phi(T)\lrcorner\,\mathring\Psi)|_{Tf(U)} \\
&=(Y\lrcorner\, T\lrcorner\,\Psi)|_{TU} \\
&=vol_h \\
&=f^\ast vol_{\mathring h}.
\end{align*}
Thus $\xi\lrcorner\,{\mathring\tau}^{-(n+2)})|_{Tf(U)}=vol_{\mathring h}$ holds. Next, for any $X\in TU$ we have
\begin{align*}
\nabla^{\mathring\tau}_{f_\ast X}\xi&\equiv\pi({\mathring\bnabla}_{\phi(X)}\phi(Y)) \\
&=\pi(\phi(\bnabla_X Y))\quad {\rm mod}\ Tf(U),
\end{align*}
where $\bnabla_X Y$ is of the form
\begin{equation}\label{nabla-X-Y}
\bnabla_X Y=
\begin{pmatrix}
0 \\
X^\b(P^h_\b{}^\a+\frac{1}{n}(\d A)_\b{}^\a-P^A_{\b}{}^\a) \\
\ast
\end{pmatrix}.
\end{equation}
The $\xi$-component of $\nabla^{\mathring\tau}_{f_\ast X}\xi$ equals the $Y$-component of $\bnabla_X Y$, so it vanishes. Thus we have proved that $\xi$ is the affine normal field. 

We already proved that $f^\ast \mathring h=h$. By equation \eqref{nabla-X-Z}, we can see that
the induced connection on $U$ is given by $\nabla^h_\a-A_{\a\b}{}^\g$. Therefore, the Fubini--Pick form is equal to $A_{\a\b\g}$. When $n=2$, it follows from equation \eqref{nabla-X-Y} that the affine shape operator on $U$ is given by $S_{\a\b}=\mathbb P_{\a\b}+(1/2)(\d A)_{\a\b}-(1/4)|A|^2 h_{\a\b}$, and hence  the induced M\"obius structure is ${\rm tf}S_{\a\b}-(1/2)(\d A)_{\a\b}+(1/4)\scal_h h_{\a\b}=\mathbb P_{\a\b}$. Thus we 
complete the proof.
\end{proof}

\subsection{Harmonic scales}\label{harm-scale}
Let $M$ be the boundary of a strictly convex domain in a locally flat projective manifold $N$ of dimension $n+1$. Then the coefficient $L$ of the logarithmic term in the volume expansion of the Blaschke metric gives a projective invariant of $M$ when $n$ is even. We will show that in fact it is a conformal invariant of the conformal Codazzi manifold $(M, [h], A_{\a\b\g})$. Recall that ${\rm tr}\bS$ depends on the 1-jet of the projective scale along the boundary, and cannot be expressed in terms of the conformal Codazzi structure in general. Thus we need to introduce a normalization condition on the 1-jet of a projective scale. 
Let $\wt\Delta=-{\wt g}^{IJ}\wt\nabla_I\wt\nabla_J$ be the Laplacian of the ambient metric 
$\wt g_{IJ}=D_ID_J\br$.
\begin{dfn}
A projective scale $\tau\in\calE(1)$ is called a harmonic scale if it satisfies $\wt\Delta\tau=O(\br)$.
\end{dfn}
Note that the definition does not depend on the choice of $\br$. The following proposition shows that the harmonicity normalizes the transverse derivative of the scale: 
\begin{prop}
{\rm (i)} For any conformal scale $\kappa\in\calE[1]$, there exists a harmonic scale $\tau\in\calE(1)$, unique modulo $O(\br^2)$, such that $\tau|_M=\kappa$.  \\
{\rm (ii)} When $\tau\in\calE(1)$ is a harmonic scale, $\wh\tau=e^{-\U}\tau$ is harmonic if and only if 
$$
\bxi \U=\frac{1}{n}(\Delta_h\U+|d\U|_{\bh}^2),
$$
where $\bxi$ is the (weighted) affine normal field for $\tau$ and $\Delta_h=-\bh^{\a\b}\nabla^h_\a\nabla^h_\b$. \\
{\rm (iii)} A projective scale $\tau\in\calE(1)$ is harmonic if and only if 
\begin{equation}\label{trS-harmonic}
{\rm tr}\bS=\frac{1}{n-1}(\scal_h-|A|^2)
\end{equation}
holds.
\end{prop}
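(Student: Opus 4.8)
\emph{Proof proposal.}
Everything reduces to one formula for the boundary restriction of $\wt\Delta\tau$. As in the proof of Lemma~\ref{MAlem}, the Monge--Amp\`ere equation gives $\wt g^{IJ}\wt\Gamma_{IJ}{}^K=O(\br)$, so $\wt\Delta\phi=-\wt g^{IJ}D_ID_J\phi+O(\br)$ for every weighted density $\phi$. Taking $\phi=\tau$ and computing in the scale $\tau$, in which the representative connection $\nabla\in[\nabla]$ satisfies $\nabla\tau=0$, the proof of Proposition~\ref{affine-scale} gives $D_ID_J\tau=\left(\begin{smallmatrix}0&0\\0&P^\tau_{ij}\tau\end{smallmatrix}\right)$, hence $\wt\Delta\tau=-\wt g^{ij}P^\tau_{ij}\tau+O(\br)$, where $\wt g^{ij}$ denotes the $(i,j)$-block of $\wt g^{IJ}$. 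Working in an adapted frame for $\tau$, equation~\eqref{g-M} gives $\wt g^{\infty\infty}|_M=\wt g^{\infty\a}|_M=0$ and $\wt g^{\a\b}|_M=\bh^{\a\b}$, so
\[
\wt\Delta\tau|_M=-(\bh^{\a\b}P^\tau_{\a\b})|_M\,\tau,
\]
and $\tau$ is harmonic if and only if the tangential trace $h^{\a\b}P^\tau_{\a\b}|_M$ vanishes. Statement~(iii) is then immediate: taking the $\bh^{\a\b}$-trace of \eqref{Pab-M}, and using $\bh^{\a\b}(\d A)_{\a\b}=\nabla^h_\g(\bh^{\a\b}A_{\a\b}{}^\g)=0$, $\bh^{\a\b}A_{\a\mu\nu}A_\b{}^{\mu\nu}=|A|^2$, $\bh^{\a\b}h_{\a\b}=n$, one obtains $(n-1)\,\bh^{\a\b}P^\tau_{\a\b}|_M=\scal_h-|A|^2-(n-1)\,{\rm tr}S$, so the vanishing of $h^{\a\b}P^\tau_{\a\b}|_M$ is precisely~\eqref{trS-harmonic} (recall ${\rm tr}\bS={\rm tr}S$ in the scale $\tau$).

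For~(i), rewrite the displayed formula in an arbitrary fixed reference scale as $\wt\Delta\tau|_M=-\bh^{\a\b}(\nabla_\a\nabla_\b\tau+P_{\a\b}\tau)|_M$. This depends on $\tau$ only through its $1$-jet along $M$: the tangential second derivatives of $\tau$ at $M$ are determined by $\tau|_M$, and the sole transverse derivative occurring in $\nabla_\a\nabla_\b\tau|_M$ is the term $h_{\a\b}\,\xi\tau$ (with $\xi$ the affine normal), whose $\bh$-trace $n\,\xi\tau|_M$ has nonzero coefficient. Hence, for prescribed $\tau|_M=\kappa$, the equation $\wt\Delta\tau|_M=0$ determines $\xi\tau|_M$, and thus the $1$-jet of $\tau$, uniquely; any extension realizing this $1$-jet is harmonic, two extensions with the same $1$-jet differ by an $O(\br^2)$ term, and adding such a term changes neither the $1$-jet nor harmonicity. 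This gives existence and uniqueness modulo $O(\br^2)$. (Concretely, $\wt\Delta(\phi\br^2)=O(\br)$ for smooth $\phi$, since $\wt\Delta\br=-(n+2)+O(\br)$ and $\wt g^{IJ}(D_I\br)(D_J\br)|_M=0$ by \eqref{g-M}.)

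For~(ii), the Leibniz rule for $\wt\Delta$ gives
\[
\wt\Delta(e^{-\U}\tau)=e^{-\U}\Bigl(\wt\Delta\tau-\bigl(\wt\Delta\U+\wt g^{IJ}(D_I\U)(D_J\U)\bigr)\tau+2\,\wt g^{IJ}(D_I\U)(D_J\tau)\Bigr),
\]
so, as $\wt\Delta\tau=O(\br)$, the scale $\wh\tau=e^{-\U}\tau$ is harmonic exactly when $\bigl(\wt\Delta\U+\wt g^{IJ}(D_I\U)(D_J\U)\bigr)\tau=2\,\wt g^{IJ}(D_I\U)(D_J\tau)$ holds on $M$. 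In the adapted frame for $\tau$, using \eqref{g-M} together with $D_I\U=\left(\begin{smallmatrix}0\\\nabla_i\U\end{smallmatrix}\right)$, $D_J\tau=\left(\begin{smallmatrix}\tau\\0\end{smallmatrix}\right)$, $D_ID_J\U=\left(\begin{smallmatrix}0&-\nabla_j\U\\-\nabla_i\U&\nabla_i\nabla_j\U\end{smallmatrix}\right)$ for $\U\in\calE(0)$, and the identities $\nabla_\a\nabla_\b\U=\nabla^\xi_\a\nabla^\xi_\b\U+h_{\a\b}\,\xi\U$, $\bh^{\a\b}\nabla^\xi_\a\nabla^\xi_\b=\bh^{\a\b}\nabla^h_\a\nabla^h_\b$ (as $A$ is trace-free), one computes on $M$: $\wt g^{IJ}(D_I\U)(D_J\tau)=(\xi\U)\tau$, $\wt g^{IJ}(D_I\U)(D_J\U)=|d\U|_{\bh}^2$, and $\wt\Delta\U=\Delta_h\U-(n-2)\,\xi\U$. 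Substituting and cancelling $\tau>0$ yields $\Delta_h\U+|d\U|_{\bh}^2=n\,\xi\U$, that is $\bxi\U=\tfrac1n(\Delta_h\U+|d\U|_{\bh}^2)$. (Alternatively, (ii) also follows from (iii) by inserting the transformation law of $\bS_\a{}^\b$ from Proposition~\ref{affine-transform} and the conformal change of $\scal_h$.)

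I expect the one real difficulty to be bookkeeping rather than substance: tracking the projective and conformal density weights and the choice of scale carefully, so that the adapted-frame computation of $\wt\Delta\U|_M$ (and, on the alternative route, the conformal transformation of $\scal_h$) matches the weighted objects $\bxi\U$, $\Delta_h\U$, $|d\U|_{\bh}$ appearing in the statement, and checking that each term discarded along the way — the Christoffel contraction, $\br\,\wt\Delta\br$, and $\wt g^{IJ}(D_I\br)(D_J\br)$ — genuinely lies in $O(\br)$.
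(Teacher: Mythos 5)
Your proposal is correct and follows essentially the same route as the paper: everything reduces to $\wt\Delta\tau=-\wt g^{IJ}D_ID_J\tau+O(\br)$ evaluated in an adapted frame via \eqref{g-M}, giving $\wt\Delta\tau|_M=-\tau\,\bh^{\a\b}P_{\a\b}|_M$ for (iii) and the same Leibniz computation for (ii). The only cosmetic difference is in (i), where the paper perturbs explicitly as $\tau=\mathring\tau+\br\phi$ and computes $\wt\Delta(\br\phi)=-n\phi+O(\br)$, whereas you phrase the same fact as the statement that $\wt\Delta\tau|_M$ depends linearly on $\xi\tau|_M$ with nonzero coefficient; both are valid and equivalent.
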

\begin{proof}
{\rm (i)} Let $\mathring\tau$ be an arbitrary extension of $\kappa$ and set $\tau=\mathring\tau+\br\phi$ for
a density $\phi\in\calE(-1)$. Then using $\wt\Delta\br=-(n+2)$ and $\wt g^{IJ}D_J\br=T^I$, we have
$$
\wt\Delta\tau=\wt\Delta\mathring\tau-n\phi+O(\br).
$$
Thus there is a $\phi$ unique modulo $O(\br)$ such that $\wt\Delta\tau=O(\br)$. 

{\rm (ii)} By the Monge--Amp\`{e}re equation $\det \wt g_{IJ}=-1+O(\br^{n/2+1})$, the Christoffel symbols of 
$\wt\nabla$ satisfy $\wt g^{IJ}\wt\Gamma_{IJ}{}^K=O(\br^{n/2})$. Thus we have
\begin{align*}
\wt\Delta\wh\tau&=-\wt g^{IJ}D_ID_J\wh\tau+O(\br) \\
&=e^{-\U}\wt g^{IJ}(\tau D_ID_J\U-\tau D_I\U D_J\U+2D_I\U D_J\tau)+O(\br).
\end{align*}
In an adapted frame $\{e_\infty=\xi, e_\a\}$ for the scale $\tau$, it holds at the boundary that
\begin{align*}
D_I\tau&=
\begin{pmatrix}
\tau \\
0 \\
0
\end{pmatrix},\quad
D_J\U=
\begin{pmatrix}
0 \\
\xi\U \\
\nabla^h_\b\U
\end{pmatrix},  \\
D_ID_J\U&=
\begin{pmatrix}
0 & -\xi\U & -\nabla^h_\b\U \\
-\xi\U & \ast & \ast \\
-\nabla^h_\a\U & \ast & \nabla^h_\a\nabla^h_\b\U+A_{\a\b}{}^\g\nabla^h_\g\U+(\bxi\U)\bh_{\a\b}
\end{pmatrix}. 
\end{align*}
Therefore using \eqref{g-M} we have 
$$
\wt\Delta\wh\tau=e^{-\U}\tau(n\,\bxi\U-\Delta_h\U-|d\U|_{\bh}^2)+O(\br),
$$
which proves the assertion. 

{\rm (iii)} In an adapted frame for the scale $\tau$, it holds at the boundary that
$$
D_ID_J\tau=
\tau\begin{pmatrix}
0 & 0 & 0 \\
0 & P_{\infty\infty} & P_{\b\infty} \\
0 & P_{\a\infty} & P_{\a\b}
\end{pmatrix}.
$$
Thus, by \eqref{Pab-Ric}, we have
$$
\wt\Delta\tau|_M=-\tau\bh^{\a\b}P_{\a\b}=\tau\Bigl({\rm tr}\bS-\frac{1}{n-1}(\scal_h-|A|^2)\Bigr),
$$
from which (iii) follows.
\end{proof}

By the above proposition, a conformal scale on $M$ determines the $1$-jet of a harmonic scale and we can extend it to a normalized projective scale. Thus, it follows from Theorem \ref{L-exp} that $L$ gives a conformal invariant of $M$:
\begin{thm}\label{L-codazzi}
Let $M$ be the boundary of a strictly convex domain in a locally flat projective manifold, and let $\kappa\in\calE[1]$ be a conformal scale on $M$. Then the coefficient of the logarithmic term in the volume expansion of the Blaschke metric can be written in the form
$$
L=\int_M F\,vol_h,
$$
where $F$ is a linear combination of complete contractions of $h_{\a\b}, {\rm Ric}^h_{\a\b}, A_{\a\b\g}$ and their covariant derivatives with respect to $\nabla^h$. Moreover the integral is independent of the choice of $\kappa$. 
\end{thm}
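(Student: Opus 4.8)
The plan is to combine Theorem~\ref{L-exp} with the two normalization results on harmonic scales, the point being that a conformal scale $\kappa$ on $M$ singles out a distinguished extension to a projective scale near $M$ in which $L$ can be read off. First, given $\kappa\in\calE[1]$ (with $n$ even, so that $L$ occurs), I would invoke part~(i) of the Proposition on harmonic scales to obtain a harmonic scale $\tau\in\calE(1)$ with $\tau|_M=\kappa$, unique modulo $O(\br^2)$; hence its $1$-jet $\tau_I\in\mathcal{S}$ along $M$ depends on $\kappa$ alone. I would then extend this $1$-jet to a normalized scale by tractor parallel transport along the geodesics of $[\nabla]$ issuing from the associated affine normal field, as in \S\ref{vol-exp-affine}. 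Because harmonicity $\wt\Delta\tau=O(\br)$ is, by part~(iii) of the same Proposition, a condition on the $1$-jet of $\tau$ along $M$ only, this normalized scale is again harmonic, and it is the scale in which I would compute $L$.

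In this scale Theorem~\ref{L-exp} already writes $L$ as the integral over $M$ of a linear combination of complete contractions of $h_{\a\b}$, $\Ric^h_{\a\b}$, the affine shape operator $\bS_\a{}^\b$, $A_{\a\b\g}$ and their $\nabla^h$-derivatives, so the only remaining task is to eliminate $\bS$. Here I would use that, in a harmonic scale, ${\rm tr}\,\bS=\tfrac{1}{n-1}(\scal_h-|A|^2)$ by part~(iii) of the Proposition, while \eqref{tfS} gives ${\rm tf}\,\bS_{\a\b}=\tfrac1n(\d A)_{\a\b}+{\rm tf}(P^h_{\a\b}-P^A_{\a\b})$; since $P^h_{\a\b}$ is an explicit expression in $\Ric^h_{\a\b}$ and $h_{\a\b}$ and $P^A_{\a\b}$ is one in $A_{\a\b\g}$ and $h_{\a\b}$, this recovers $\bS_{\a\b}$, and hence all of its $\nabla^h$-derivatives, in terms of $h_{\a\b}$, $\Ric^h_{\a\b}$, $A_{\a\b\g}$ and $\nabla^h$. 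Substituting this back into the expression of Theorem~\ref{L-exp} yields the integrand $F$ with the stated dependence. For $n=2$ one does not even need the substitution: \eqref{L2} already exhibits $L$ as a nonzero multiple of $4\pi\chi(M)-\int_M|A|^2\,vol_h=\int_M(\scal_h-|A|^2)\,vol_h$, which is visibly of this form.

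Finally, the independence of the integral on $\kappa$ requires no further computation. By Theorem~\ref{volume} the coefficient $L$ of the logarithmic term is a projective invariant of the domain $\Omega$, in particular independent of every choice of projective scale; and the formula just obtained expresses this one number as $\int_M F\,vol_h$ computed from the normalized harmonic scale attached to an arbitrary conformal representative $\kappa$. Hence the value of $\int_M F\,vol_h$ cannot depend on $\kappa$, so it is a conformal invariant of the conformal Codazzi manifold $(M,[h],A_{\a\b\g})$. The only genuinely delicate point is the compatibility of the two normalizations — namely that the $1$-jet pinned down by the harmonic condition can be completed to a normalized scale without disturbing harmonicity, which is exactly why part~(iii) of the Proposition is needed — together with the routine but lengthy verification that, after substituting for $\bS_{\a\b}$, no shape-operator dependence survives in the formula of Theorem~\ref{L-exp} for any even $n$; the overall structure of the argument is otherwise entirely formal given the results already established.
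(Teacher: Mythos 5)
Your proposal is correct and follows essentially the same route as the paper: a conformal scale determines the $1$-jet of a harmonic scale (unique mod $O(\br^2)$), which is completed to a normalized scale so that Theorem \ref{L-exp} applies, and the shape operator is then eliminated via the harmonicity identity ${\rm tr}\,\bS=\tfrac{1}{n-1}(\scal_h-|A|^2)$ together with \eqref{tfS}, with independence of $\kappa$ following from the projective invariance of $L$ in Theorem \ref{volume}. Your explicit remark that harmonicity is a condition on the $1$-jet alone (so the normalized extension stays compatible) is exactly the point the paper leaves implicit.
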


We shall compute the conformal invariant $L$ for $n=4$. Let $\tau\in\calE(1)$ be a harmonic normalized 
scale. By \eqref{eq-v}, $L$ is given by the integral of 
$$
\xi^2 v|_M=(\xi{\rm tr}S-\xi r+({\rm tr S}-r)^2)|_M
$$
over $M$. Since $\tau$ is harmonic, we have 
$$
r|_M=\frac{1}{n}{\rm tr}S=\frac{1}{n(n-1)}(\scal_h-|A|^2).
$$
By differentiating \eqref{r} and using \eqref{xi-h} and \eqref{xi-P}, we have
$$
\xi r|_M=\frac{1}{2}(|P|^2-|S|^2)+({\rm div}),
$$
where $({\rm div})$ denotes divergence terms. The equation \eqref{xi-S} gives
$$
\xi{\rm tr}S|_M=-|S|^2-\frac{1}{4}({\rm tr}S)^2+({\rm div}).
$$
Then it follows from \eqref{tfS} and \eqref{tfP} that 
\begin{align*}
\xi^2 v|_M&=-\frac{1}{2}(|{\rm tf}S|^2+|{\rm tf}P|^2)+\frac{3}{16}({\rm tr}S)^2+({\rm div}) \\
&=\frac{1}{8}\Bigl(|W^h|^2-2|{\rm Ric}^h|^2+\frac{2}{3}\scal_h^2\Bigr)-\frac{1}{8}|W^h|^2 -\frac{1}{4}|A^2|^2 +\frac{1}{12}|A|^4 \\
&\quad-\frac{1}{16}|\d A|^2+\frac{1}{6}|A|^2\scal_h+\frac{1}{2}\langle {\rm Ric}^h, A^2\rangle+({\rm div}),
\end{align*}
where the tensor $A^2$ is defined by $(A^2)_{\a\b}=A_{\a\mu\nu}A_\b{}^{\mu\nu}$. Thus by the Gauss--Bonnet theorem, we obtain
\begin{equation}\label{L4}
\begin{aligned}
L=4\pi^2\chi(M)&-\int_M \Bigl(\frac{1}{8}|W^h|^2 +\frac{1}{4}|A^2|^2 -\frac{1}{12}|A|^4 \Bigr)vol_h \\
&-\int_M\Bigl(\frac{1}{6}|A|^2\scal_h-\frac{1}{2}\langle {\rm Ric}^h, A^2\rangle+\frac{1}{16}|\d A|^2\Bigl)vol_h.
\end{aligned}
\end{equation}

\end{document}